\documentclass[11pt,reqno]{amsart}

\usepackage{amsmath,amssymb,mathrsfs}
\usepackage{graphicx,cite}
\usepackage{dsfont}
\usepackage{color}
\usepackage{multirow}%
\usepackage{amsmath,amssymb,amsfonts}%
\usepackage{amsthm}%
\usepackage[title]{appendix}%
\usepackage{xcolor}%
\usepackage{textcomp}%
\usepackage{manyfoot}%
\usepackage{booktabs}%
\usepackage{algorithm}%
\usepackage{algorithmicx}%
\usepackage{algpseudocode}%
\usepackage{listings}%
\usepackage{tikz}
\usetikzlibrary{shapes,arrows}
\usepackage{graphicx}
\usepackage{float}
\usepackage{stmaryrd}
\usepackage{bbm}
\usepackage[utf8]{inputenc}
\allowdisplaybreaks[4]
\newtheorem{theorem}{Theorem}[section]%
\newtheorem{assumption}[theorem]{Assumption}%
\newtheorem{remark}[theorem]{Remark}

\newtheorem{lemma}[theorem]{Lemma}
\numberwithin{equation}{section}

\begin{document}

\title[An inverse random source problem]{An inverse random source problem for the fractional Helmholtz equation}

\author{Peijun Li}
\address{SKLMS, ICMSEC, Academy of Mathematics and Systems Science, Chinese Academy of Sciences, Beijing 100190, China}
\email{lipeijun@lsec.cc.ac.cn}

\author{Zhenqian Li}
\address{Academy of Mathematics and Systems Science, Chinese Academy of Sciences, Beijing 100190, China}
\email{lizhenqian2020@amss.ac.cn}

\thanks{The work is supported by the National Key R\&D Program of China (2024YFA1012300).}

\subjclass[2020]{35R30, 35R60, 60H15}

\keywords{Inverse random source problem, fractional Helmholtz equation, generalized Gaussian random field, pseudo-differential operator, uniquness}

\begin{abstract}
This paper investigates an inverse random source problem for the stochastic fractional Helmholtz equation. The source is modeled as a centered, complex-valued, microlocally isotropic generalized Gaussian random field whose covariance and relation operators are described by classical pseudo-differential operators. For sufficiently large wavenumbers, we first establish the well-posedness of the direct problem in the distributional sense by analyzing the corresponding Lippmann--Schwinger integral equation. For the inverse problem, we show that the principal symbols of both the covariance and relation operators can be uniquely determined, with probability one, from the far-field patterns generated by a single realization of the random source. The approach employs a combination of the Born linearization, asymptotic expansions of the fractional Helmholtz Green kernel at high wavenumbers, and microlocal analysis of associated Fourier integral operators.
\end{abstract}

\maketitle

\section{Introduction}

The fractional Helmholtz equation provides a flexible framework for modeling wave propagation in media where classical, local constitutive laws fail. Its fractional Laplacian accounts for intrinsic nonlocality, power-law dispersion, and anomalous attenuation that arise in a variety of physical systems, including quantum optics \cite{Kraisler-Schotland-JMP-23, Kraisler-Schotland-JMP-22}, geophysical electromagnetics \cite{Glusa-Antil-DElia-Waanders-Weiss-SISC-21}, and electrodynamics \cite{Tarasov-Trujillo-AOP-13}. In quantum optical settings, for instance, fractional Helmholtz models can describe photon fields coupled to atomic degrees of freedom through real-space quantization and capture the global, nonlocal characteristics of photon-atom interactions \cite{Hiltunen-Kraisler-Schotland-Weinstein-SIMA-24, Hoskins-Kaye-Rachh-Schotland-JCP-23}. In geophysical electromagnetics, recent studies demonstrate that fractional Helmholtz equations effectively reproduce anomalous magnetotelluric responses and model long-range spatial correlations in self-similar geological formations \cite{Weiss-Waanders-Antil-GJI-20}.

In many applications where fractional Helmholtz models arise, the underlying medium or source exhibits substantial randomness due to complex environmental conditions, measurement noise, or intrinsic stochastic mechanisms governing the physical process. These effects introduce uncertainties that cannot be described by deterministic models, making stochastic formulations both natural and necessary. Incorporating random parameters, however, significantly increases the analytical complexity: the direct problem must be treated with coefficients or sources of low regularity, and inverse problems are recast from recovering deterministic quantities to identifying statistical properties, such as covariance structures or principal symbols of the underlying random fields.

In this paper, we study the stochastic fractional Helmholtz equation
\begin{align}\label{119}
    ( - \Delta )^{ \alpha } u - k^{ 2 \alpha } u + q u = f \quad \text{in} \ \mathbb{ R }^{ d },
\end{align}
where $\alpha \in ( 0, 1 )$ is the fractional order, $d \in \{ 1, 2, 3 \}$ denotes the spatial dimension, and $k > 0$ is the wavenumber. The potential $q$ is a deterministic function, while the source term $f$ is modeled as a complex-valued generalized microlocally isotropic Gaussian (GMIG) random field. The precise regularity assumptions on $q$ and $f$ will be specified later. The fractional Laplacian $( - \Delta )^{ \alpha }$ is defined via the Fourier transform by
\begin{align*}
    ( - \Delta )^{ \alpha } u ( x ) = \frac{ 1 }{ ( 2 \pi )^{ d } } \int_{ \mathbb{ R }^{ d } } e^{ \mathrm{ i } x \cdot \xi } | \xi |^{ 2 \alpha } \hat{ u } ( \xi ) d \xi.
\end{align*}

To ensure well-posedness in the whole space $\mathbb{R}^{d}$, the solution $u$ satisfies the Sommerfeld radiation condition
\begin{align}\label{102}
    \lim \limits_{ r \rightarrow \infty } r^{ \frac{ d - 1 }{ 2 } } ( \partial_{ r } u - \mathrm{i} k u ) = 0, \quad r = | x |.
\end{align}
This condition implies the following asymptotic expansion of the wave field:
\begin{align}\label{ffp}
    u ( x, k ) = \frac{ e^{ \mathrm{i} k |x| }}{ | x |^{ \frac{ d - 1 }{ 2 } } } u^{ \infty } ( \hat{x}, k ) + O ( | x |^{ -N_d }) , \quad  \ | x | \rightarrow \infty,
\end{align}
where $u^{\infty}$ denotes the far-field pattern, $\hat{x}=x/|x|$ is the observation direction, and the decay exponents $N_d$ are given by
\begin{align}\label{Nd}
     N_{ d } =
    \begin{cases}
        1 + 2 \alpha, & d = 1,\\[3pt] 
        \frac{ 3 }{ 2 }, & d = 2,\\[3pt] 
        3 + 2 \alpha, & d = 3.
    \end{cases}
\end{align}

We examine the stochastic fractional Helmholtz model \eqref{119}--\eqref{102} from both direct and inverse perspectives. For the direct problem, our goal is to establish well-posedness, i.e., the existence, uniqueness, and regularity of the solution $u$. For the inverse problem, we aim to extract statistical properties of the underlying random source $f$ from observed far-field data.

On the side of direct problems, considerable progress has been made on the analysis and numerical simulation of equations involving the fractional Laplacian, and in particular the fractional Helmholtz equation. Fast and scalable solvers have been developed in \cite{Glusa-Antil-DElia-Waanders-Weiss-SISC-21, Belevtsov-Lukashchuk-AMC-22} to handle the nonlocal structure and oscillatory nature of fractional Helmholtz operators, motivated in part by applications in wave propagation and electromagnetic modeling. In quantum optics, nonlocal partial differential equations of fractional Helmholtz type have been employed to describe bound states, resonances, and related spectral phenomena \cite{Hiltunen-Kraisler-Schotland-Weinstein-SIMA-24}, while high-order numerical methods have been proposed for simulating single-excitation states \cite{Hoskins-Kaye-Rachh-Schotland-JCP-23}. Kinetic descriptions for one- and two-photon light in random media have been derived in \cite{Kraisler-Schotland-JMP-22, Kraisler-Schotland-JMP-23}, further illustrating the relevance of nonlocal models in complex optical environments. Analytic solution properties and complex-valued formulations of the fractional Helmholtz equation were investigated in \cite{Shen-Zhang-MMAS-25}.

Complementary to these developments on direct problems, inverse problems for partial differential equations involving the fractional Laplacian have attracted significant attention in recent years. A substantial line of work focuses on fractional analogues of classical Calder\'{o}n or Schr\"{o}dinger inverse problems. Results include uniqueness and reconstruction from a single measurement for the fractional Calder\'{o}n problem \cite{Ghosh-Ruland-Salo-Uhlmann-JFA-20} and further developments on stability and low regularity formulations \cite{Ruland-Salo-NA-20, Covi-Railo-Tyni-Zimmermann-SIMA-24, Ruland-SIMA-21, Railo-Zimmermann-NA-24}. For the fractional Schr\"{o}dinger equation, both uniqueness and recovery of potentials have been established in \cite{Ghosh-Salo-Uhlmann-AP-20}, while extensions to magnetic settings are investigated in \cite{Covi-IP-20}. Inverse problems for fractional Helmholtz equations have also been studied. The reconstruction of source and potential was discussed in \cite{Cao-Liu-CMS-19}, whereas geometrical optics constructions and related inverse results were recently developed in \cite{Covi-Hoop-Salo-MMMAS-25}. Bayesian and statistical frameworks for inverse scattering with fractional Helmholtz operators have been explored in \cite{Jia-Yue-Peng-Gao-JFA-18}. Broader surveys and overviews of fractional inverse conductivity problems can be found in \cite{Covi-25}. These works demonstrate the rapidly growing theoretical foundation for inverse problems involving nonlocal operators and highlight the importance of developing new techniques tailored to fractional models.

Inverse problems with random sources or random potentials have been actively studied in recent years. An inverse scattering framework for two-dimensional random potentials was established in \cite{Lassas-Paivarinta-Saksman-CMP-08}, where it was shown that suitable statistical information of the potential can be recovered from ensemble averaged measurements. This direction was further advanced in \cite{Caro-Helin-Lassas-AA-19}, in which inverse scattering for higher dimensional random potentials was analyzed and uniqueness results for recovering the covariance structure were obtained. More recently, simultaneous identification of random potentials and random sources for a Schr\"{o}dinger model was examined in \cite{Li-Liu-Ma-CMP-21}. Inverse random source problems for wave equations have also been investigated: an attenuated Helmholtz model was examined in \cite{Li-Wang-SIAP-21}, and stochastic wave equations with far-field measurements were investigated in \cite{Li-Li-Wang-SIAP-22}. These works demonstrate that meaningful statistical characteristics, such as covariance structures or correlation functions, of random sources or potentials can often be recovered from scattering data, even in the presence of significant randomness and low regularity.

Building on these developments, the present work considers a broad class of highly irregular random sources that align naturally with the nonlocal structure of the fractional Helmholtz operator. Under minimal assumptions on the source regularity, existence, uniqueness, and regularity of the resulting wave field are established through an analysis of the associated Lippmann--Schwinger equation. For the inverse problem, it is shown that key statistical characteristics of the random source, specifically, the principal strengths of its covariance and relation operators, can be uniquely recovered almost surely from far-field data generated by a single realization. Our approach employs the Born approximation, high-frequency expansions of the Green's function associated with the fractional Helmholtz operator, and microlocal methods involving Fourier integral operators. These results extend the existing theory of inverse random source problems to the fractional Helmholtz setting and demonstrate that statistical information remains recoverable even in nonlocal and stochastic wave propagation models.

The paper is organized as follows. Section~\ref{sec2} introduces several preliminaries on weighted Sobolev spaces and GMIG random fields. In Section~\ref{sec3}, the well-posedness of the direct problem is established. Section~\ref{sec4} addresses the inverse problem and establishes a uniqueness theorem for determining the principal symbols of both the covariance and relation operators of the random source. General concluding remarks are provided in Section~\ref{sec5}. The appendix contains the derivation of the Green's function for the fractional Helmholtz equation and its asymptotic behavior.

\section{Preliminaries}\label{sec2}

In this section, we introduce weighted Sobolev spaces and complex-valued GMIG random fields employed in this work.

\subsection{Sobolev spaces}

For $x \in \mathbb{ R }^{ d }$, let $\langle x \rangle := ( 1 + | x |^{ 2 } )^{ 1 / 2 }$. For any $\delta \in \mathbb{R}$ and $1 \le p < \infty$, the weighted $L^{ p }$-norm and the corresponding weighted $L^{ p }$ space on $\mathbb R^{ d }$ are defined by
\begin{align*}
    \| \phi \|_{ L_\delta^p ( \mathbb{R}^{ d } ) } : = \| \langle \cdot \rangle^{ \delta } \phi ( \cdot ) \|_{ L^p ( \mathbb{R}^{ d } ) },\quad L_\delta^p ( \mathbb{R}^{ d } ) : = \big \{ \phi \in L_{ \text{loc} }^1 ( \mathbb{R}^{ d } ) : \| \phi \|_{ L_\delta^p ( \mathbb{R}^{ d } ) }< \infty \big\}.
\end{align*}
For any subset $U\subset\mathbb R^d$, the space $L_\delta^p ( U )$ is defined analogously by replacing $\mathbb{R}^{ d }$ with $U$. 

Let $\mathscr{D} ( \mathbb{R}^{ d } )$ denote the space of test functions on $\mathbb{R}^{d}$, i.e., the space $C^{ \infty }_{ 0 } ( \mathbb{R}^{ d } )$ endowed with the usual locally convex topology. Its topological dual is the space of distributions, denoted by $\mathscr{D}^{ \prime } ( \mathbb{R}^{ d } )$. Let $I$ be the identity operator. We define the weighted Sobolev norms and spaces by
\begin{align*}
    \| \phi \|_{ H_{ \delta }^{ s, p } ( \mathbb{R}^{ d } ) } :  = \|(I-\Delta)^{ s / 2 } \phi \|_{L_\delta^p(\mathbb{R}^{ d })}, 
    \quad H_\delta^{s, p}(\mathbb{R}^{ d }) : = \big\{\phi \in \mathscr{D}^{\prime}(\mathbb{R}^{ d }) : \|\phi\|_{H_\delta^{s, p}(\mathbb{R}^{ d })} < \infty \big\}. 
\end{align*}
We adopt the simplifications $H_{ \delta }^{ s, 2 } ( \mathbb{R}^{ d } ) = H_{ \delta }^{ s } ( \mathbb{R}^{ d } ) $ and $H_{ 0 }^{ s, p } ( \mathbb{R}^{ d } ) = H^{ s, p } ( \mathbb{R}^{ d } )$. It can be verified that 
\begin{align}\label{14}
    \| \phi \|_{ H_\delta^s ( \mathbb{R}^{ d } ) } = ( 2 \pi )^{ - d / 2 } \| \langle \cdot \rangle^{ s } \hat{\phi}(\cdot) \|_{ H^\delta ( \mathbb{R}^{ d } ) }.
\end{align}

\subsection{Complex-valued GMIG random fields}

Let $( \Omega, \mathcal{F}, P )$ be a complete probability space. A complex-valued generalized Gaussian random field $h$ is defined as a mapping $h : \Omega \to \mathscr{D}^{ \prime } ( \mathbb{R}^{ d } )$ such that for any test function $\psi \in \mathscr{D} ( \mathbb{R}^{ d } )$, the dual product $\langle h ( \omega ), \psi \rangle$ defines a complex-valued Gaussian random variable.

For any $\varphi, \psi \in \mathscr{D} ( \mathbb{R}^{ d } )$, the covariance operator $\mathfrak{C}_{ h } : \mathscr{D} ( \mathbb{R}^{ d } ) \to \mathscr{D}^{ \prime } ( \mathbb{R}^{ d } )$ and relation operator $\mathfrak{R}_{ h } : \mathscr{D} ( \mathbb{R}^{ d } ) \to \mathscr{D}^{ \prime } ( \mathbb{R}^{ d } )$ of a centered complex-valued generalized Gaussian random field $h$ are defined respectively by 
\begin{align*}
    ( \mathfrak{ C }_{ h } \varphi ) ( \psi ) : = \mathbb{ E } ( \langle \overline{ h ( \cdot, \omega ) }, \varphi \rangle \langle h ( \cdot, \omega ), \psi \rangle ), \quad ( \mathfrak{R}_{ h } \varphi ) ( \psi ) : = \mathbb{E}(\langle h(\cdot, \omega), \varphi\rangle\langle h(\cdot, \omega), \psi\rangle).
\end{align*}
By the Schwartz kernel theorem, the operators $\mathfrak{C}_{ h }$ and $\mathfrak{R}_{ h }$ admit distributional kernels $K^{ c }_{ h } ( x, y )$ and $K^{ r }_{ h } ( x, y )$, respectively, such that, for all $\varphi, \psi \in \mathscr{D} ( \mathbb{R}^{ d } )$, 
\begin{align}\label{47}
    ( \mathfrak{C}_{ h } \varphi ) ( \psi ) = \langle K_{ h }^{ c }, \varphi \otimes \psi \rangle, \quad ( \mathfrak{R}_{ h } \varphi ) ( \psi ) = \langle K_{ h }^{ r }, \varphi \otimes \psi \rangle.
\end{align}

A complex-valued centered generalized Gaussian random field $h$ on $\mathbb{R}^{ d }$ is termed microlocally isotropic of order $- m$ in an open set $D \subset \mathbb{ R }^ { d }$ when both its covariance operator $\mathfrak{C}_{ h }$ and relation operator $\mathfrak{R}_{ h }$ are classical pseudo-differential operators of order $ - m $. The corresponding symbols $\sigma^{ c }_{ h }, \sigma^{ r }_{ h }\in S^{-m}(\mathbb R^d\times\mathbb R^d)$ satisfy 
\begin{align*}
    \sigma^{ c }_{ h } = \mu^{ c }(x) | \xi |^{ - m } + O(|\xi|^{-m-1}), \quad \sigma^{ r }_{ h } = \mu^{ r } ( x ) | \xi |^{ - m } + O(|\xi|^{-m-1}),
\end{align*}
with smooth, compactly supported, nonnegative coefficients $\mu^{ c }, \mu^{ r } \in C^{ \infty }_{ 0 } ( D ) $. The symbol class $S^{ - m } ( \mathbb{R}^{ d } \times \mathbb{R}^{ d } )$ consists of all smooth functions satisfying
\begin{align*}
 |( D_{ x }^{ \alpha } D_{ \xi }^{ \beta } \sigma ) ( x, \xi ) | \leq C_{ \alpha, \beta } ( 1 + | \xi | )^{ - m - | \beta | }
\end{align*} 
for every pair of multi-indices $\alpha, \beta$.

For $\eta \in \{ c, r\}$, the kernels and symbols satisfy the following relations in the sense of distributions:
\begin{align}\label{46}
    K_{ h }^{ \eta } ( x, y ) & = ( 2 \pi )^{ - d } \int_{ \mathbb{R}^{ d } } e^{ \mathrm{i} ( x - y ) \cdot \xi } \sigma^{ \eta }_{ h } ( x, \xi ) d \xi,
    \notag \\ \sigma^{ \eta }_{ h } ( x, \xi ) & = \int_{ D } e^{ - \mathrm{i} \xi \cdot ( x - y ) } K^{ \eta }_{ h } ( x, y ) d y.
\end{align}

The following lemma describes the regularity of GMIG random fields (cf. \cite[Lemma 2.6]{Li-Wang-SIAP-21}).
 
\begin{lemma}\label{63}
Let $h$ be a GMIG field of order $ - m $ supported in an open set $D \subset \mathbb{R}^{ d }$, with $m<d$. Then, for any $\epsilon > 0$ and any $p \in ( 1, \infty )$, it holds almost surely that $h \in W^{ - \frac{ d - m }{ 2 } - \epsilon, p } ( D ) $.
\end{lemma}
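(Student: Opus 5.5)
The plan is to prove the regularity by a moment estimate plus Gaussian hypercontractivity, working with the real and imaginary parts of $h$ or directly with $|\cdot|^p$ of complex Gaussians. Set $s = \frac{d-m}{2}+\epsilon$ and let $\Lambda^{-s} = (I-\Delta)^{-s/2}$. Since $h$ is supported in $D$, we fix a cutoff $\chi\in C_0^\infty(\mathbb R^d)$ with $\chi = 1$ near $\operatorname{supp}\mu^c\cup\operatorname{supp}\mu^r$, or near $\overline D$ if $D$ is bounded. If the support is not bounded we work locally; with compactly supported $\mu$ the relevant kernel is compactly supported modulo smoothing terms. It then suffices to show
\[
\mathbb E\,\|\Lambda^{-s}h\|_{L^p(\mathbb R^d)}^p<\infty,
\]
because this gives $h\in H^{-s,p}$ almost surely, and restriction to $D$ gives $W^{-s,p}(D)$.

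The first step is a pointwise variance bound. For fixed $x$, $\Lambda^{-s}h(x)=\langle h,G_s(x-\cdot)\rangle$, where $G_s$ is the Bessel kernel. It is centered complex Gaussian with
\[
\mathbb E|\Lambda^{-s}h(x)|^2 = \big(\Lambda^{-s}\mathfrak C_h\Lambda^{-s}\big)(x,x).
\]
The operator $\Lambda^{-s}\mathfrak C_h\Lambda^{-s}$ is a pseudo-differential operator of order $-m-2s=-d-2\epsilon<-d$. Its symbol is therefore integrable in $\xi$, so its kernel is continuous and bounded on the diagonal. The kernel formula \eqref{46} then gives
\[
\mathbb E|\Lambda^{-s}h(x)|^2=(2\pi)^{-d}\int \sigma(x,\xi)\,d\xi\le C\int(1+|\xi|)^{-d-2\epsilon}d\xi<\infty,
\]
uniformly in $x$. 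The relation operator gives an analogous bound for $\mathbb E(\Lambda^{-s}h(x))^2$; this is needed only to control the real and imaginary parts separately, and is in any case dominated by the covariance through Cauchy--Schwarz.

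The second step handles decay in $x$. Since $\mu^c$ is compactly supported, the full symbol of $\mathfrak C_h$ can be taken compactly supported in $x$, modulo a smoothing remainder coming from the support of $h$ in $D$. The Bessel kernel $G_s$ decays exponentially away from the origin. Together these give
\[
\mathbb E|\Lambda^{-s}h(x)|^2\le C_N\langle x\rangle^{-N}.
\]
To prove this I would write $\mathbb E|\Lambda^{-s}h(x)|^2=\langle K_h^c, \overline{G_s(x-\cdot)}\otimes G_s(x-\cdot)\rangle$. For $\operatorname{dist}(x,D)\ge1$ the test functions are smooth and exponentially small on $\operatorname{supp}K^c_h\subset \overline D\times\overline D$, which gives the decay. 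For nearby $x$ we use the uniform bound from the first step.

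The third step uses Gaussianity to pass from second to $p$-th moments: $\mathbb E|Z|^p\le C_p(\mathbb E|Z|^2)^{p/2}$. Fubini then gives
\[
\mathbb E\|\Lambda^{-s}h\|_{L^p}^p\le C_p\int (C_N\langle x\rangle^{-N})^{p/2}dx<\infty.
\]
Measurability of $\omega\mapsto \Lambda^{-s}h(x,\omega)$, jointly in $(x,\omega)$, holds because $\Lambda^{-s}h$ is continuous in mean square, so a measurable version exists.

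The main obstacle is the first step: justifying that the composition $\Lambda^{-s}\mathfrak C_h\Lambda^{-s}$ is a pseudo-differential operator of order $-d-2\epsilon$ whose diagonal kernel value is given by the absolutely convergent symbol integral. Here $G_s$ is not a test function, so I would approximate it by mollified cutoffs and pass to the limit using the symbol estimates. This is also where the hypothesis $m<d$ enters: it ensures $s>0$, so that $\Lambda^{-s}$ is genuinely smoothing with an integrable kernel. The degenerate case where $\mu$ is supported near $\partial D$ is handled by the cutoff $\chi$.
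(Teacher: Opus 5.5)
The paper does not prove this lemma itself; it cites \cite[Lemma 2.6]{Li-Wang-SIAP-21}. Your argument is correct and is essentially the standard proof behind that result: Gaussian moment equivalence reduces $\mathbb{E}\|(I-\Delta)^{-s/2}h\|_{L^p}^p$ to $\int\big(\mathbb{E}|(I-\Delta)^{-s/2}h(x)|^2\big)^{p/2}\,dx$, whose integrand is the diagonal value of the continuous kernel of the order $-d-2\epsilon$ operator $(I-\Delta)^{-s/2}\mathfrak{C}_h(I-\Delta)^{-s/2}$, and your decay-at-infinity step correctly uses the compactness of $\operatorname{supp}K^c_h\subset\overline{D}\times\overline{D}$, which holds in the paper since $D$ is bounded.
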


For the inverse problem, we impose the following assumptions on the random source $f$ and the deterministic potential $q$.

\begin{assumption}\label{117}
Let the source $f$ be a complex-valued centered GMIG random field of order $ -m $ satisfying $m \in ( m_{ d, \alpha }, d )$ and $\alpha \in ( \alpha_{ d }, 1 )$, where
\begin{align*}
    m_{ d, \alpha } = 
    \begin{cases}
        \frac{ 3 }{ 2 } - 2 \alpha, & d = 1,
        \\[3pt] \frac{ 5 }{ 2 } - 2 \alpha, & d = 2,
        \\[3pt] \frac{ 15 }{ 4 } - 2 \alpha, & d = 3, 
    \end{cases}
    \quad \alpha_{ d } = 
    \begin{cases}
        \frac{ 1 }{ 4 }, & d = 1, 
        \\[3pt] \frac{ 1 }{ 4 }, & d = 2, 
        \\[3pt] \frac{ 3 }{ 8 }, & d = 3.
    \end{cases}
\end{align*}
The field $f$ is compactly supported in a convex bounded region $D \subset \mathbb{R}^{ d }$. Its covariance and relation operators have principal symbols $\mu^{ c } ( x ) | \xi |^{ - m }$ and $\mu^{ r } ( x ) | \xi |^{ - m }$, where $\mu^{ c }, \mu^{ r } \in C_0^{\infty} ( D )$. The potential $q \in H_{ 0 }^{ N_{ \alpha } } ( \mathbb{R}^{ d })$ is a deterministic function satisfying $\operatorname{supp} q \subset U \subset \mathbb{R}^{ d }$, where $U$ is a convex bounded domain with $\operatorname{dist} ( D, U ) > 0$, and
\begin{align*}
    N_{ \alpha } = N_{ \alpha } ( d, m ) = 
    \begin{cases}
        0, & d = 1, 
        \\ \lceil \frac{ 5 }{ 2 } + m - 2 \alpha \rceil, & d = 2, 
        \\ \lceil 3 + \frac{ m }{ 2 } \rceil, & d = 3.
    \end{cases}
\end{align*}
The notation $\lceil x \rceil$ denotes the ceiling of $x$, which is the smallest integer greater than or equal to $x$.
\end{assumption}

\begin{remark}
Since both $D$ and $U$ are convex, there exists a hyperplane in $\mathbb{R}^{ d }$ that separates them. Let $\hat{ n }$ denote the unit normal vector of such a hyperplane, chosen to point from the half-space containing $U$ toward the half-space containing $D$.
\end{remark}

\section{The direct problem}\label{sec3}

In this section, we establish the well-posedness of the direct problem, demonstrating that it admits a unique solution almost surely in a suitable weighted Sobolev space, interpreted in the distributional sense.

We begin by introducing the fundamental solution $G^{ k }$ to the fractional Helmholtz equation, defined as the solution of
\begin{align*}
    \begin{cases}
        ( - \Delta )^{ \alpha } G^{ k } ( x ) - k^{ 2 \alpha } G^{ k } ( x  ) = \delta ( x ) & \text{in} \ \mathbb{R}^{ d }, 
        \\ \lim \limits_{r \rightarrow \infty} r^{ \frac{ d - 1 }{ 2 } } ( \partial_{ r } G^{ k } - \mathrm{i} k G^{ k } ) = 0 & \text{for} \ r = | x |.
    \end{cases}
\end{align*}
The explicit expression for the Green's function $G^k$ is given by
\begin{align*}
    G^{ k } ( x ) = 
    \begin{cases}
    - \frac{ \sqrt{ \pi } k^{ 1 - 2 \alpha } }{ 4 \alpha } H^{ 2, 1 }_{ 2, 4 } ( \frac{ k | x | }{ 2 } ) + \frac{ \mathrm{i} k^{ 1 - 2 \alpha } }{ 2 \alpha } \cos ( k | x | ),  & d = 1,\\[4pt]
    - \frac{ k^{ 2 - 2 \alpha } }{ 8 \alpha } H^{ 2, 1 }_{ 2, 4 } ( \frac{ k | x | }{ 2 } ) + \frac{ \mathrm{i} k^{ 2 - 2 \alpha } }{ 4 \alpha } J_{ 0 } ( k | x | ), & d = 2,\\[4pt] 
    - \frac{ k^{ 3 - 2 \alpha } }{ 16 \alpha \sqrt{ \pi } } H^{ 2, 1 }_{ 2, 4 } ( \frac{ k | x | }{ 2 } ) + \frac{ \mathrm{i} k^{ 2 - 2 \alpha } }{ 4 \pi \alpha } \frac{ \sin ( k | x | ) }{ | x | }, & d = 3.
    \end{cases}
\end{align*}
A complete derivation of the expression for $G^k$ is provided in the Appendix.

Define the resolvent operator $\mathcal{ H }_{ k }$ by 
\begin{align*}
    ( \mathcal{H}_k \varphi ) ( x ) : = \int_{ \mathbb{R}^{ d } } G^k ( x, z ) \varphi ( z ) d z. 
\end{align*}
This operator satisfies the following quantitative estimate, which makes explicit the dependence on the wavenumber $k$. The proof is inspired by \cite{Li-Liu-Ma-CMP-21}.

\begin{theorem}\label{6}
Let $s \in [ \frac{ \alpha }{ 2 }, \alpha )$, $\epsilon > 0$, and $p \in ( 1, \infty )$. If $k > 2^{ \frac{1}{ 2 \alpha - 2 s } }$, then there exists a constant $C_{ \alpha, s, p, d, \epsilon }>0 $ depending on $\alpha, s, p, d$, and $\epsilon$ such that
\begin{align*}
    \| \mathcal{ H }_{ k } \varphi \|_{ H_{ - 1 / 2 - \epsilon }^{ s / p } ( \mathbb{R}^{ d } ) } \leq C_{ \alpha, s, p, d, \epsilon } k^{ - 2 s ( 1 - \frac{ 1 }{ p } ) } \| \varphi \|_{ H_{ 1 / 2 + \epsilon }^{ - s / p }( \mathbb{ R }^{ d } ) }, \quad \forall \, \varphi \in H_{ 1 / 2 + \epsilon }^{ - s / p } ( \mathbb{ R }^{ d } ).
\end{align*}
\end{theorem}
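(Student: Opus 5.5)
The estimate will follow from Riesz--Thorin interpolation between two endpoint bounds. The $p=1$ endpoint is an $L^\infty_{1/2+\epsilon}\to L^\infty_{-1/2-\epsilon}$-type bound with no regularity gain. The $p=\infty$ endpoint is a weighted $L^2$ bound with a gain of $s$ derivatives on each side and the decay $k^{-2s}$. Interpolating, the exponent of $k$ is $-2s(1-1/p)$. The exponents $\pm s/p$ in the statement as written suggest the roles of the endpoints are arranged differently, and I would reconcile this by adjusting which endpoint carries the smoothing. In every case the basic tool is the Fourier representation
\[
\widehat{\mathcal H_k\varphi}(\xi)=\frac{\hat\varphi(\xi)}{|\xi|^{2\alpha}-k^{2\alpha}}
\]
(with the outgoing limit $k^{2\alpha}+\mathrm i0$), combined with \eqref{14}, which converts weighted Sobolev norms into Sobolev norms of Fourier transforms.

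\textbf{Step 1 (splitting of the symbol).} Fix a cutoff $\chi\in C_0^\infty$ equal to $1$ on $\{||\xi|-k|<k/2\}$. Decompose $\mathcal H_k=\mathcal H_k^{\rm near}+\mathcal H_k^{\rm far}$, where $\mathcal H_k^{\rm far}$ has symbol $(1-\chi)(|\xi|^{2\alpha}-k^{2\alpha})^{-1}$. This symbol is bounded by $C\langle\xi\rangle^{-2\alpha}$ uniformly in $k$, and also by $Ck^{-2\alpha}$ away from $\xi=0$. The low-frequency part $|\xi|\ll k$ is handled directly: there $||\xi|^{2\alpha}-k^{2\alpha}|\gtrsim k^{2\alpha}$. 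Because $s<\alpha$, we get
\[
\langle\xi\rangle^{2s}\,\bigl||\xi|^{2\alpha}-k^{2\alpha}\bigr|^{-1}\le Ck^{-2(\alpha-s)}\le Ck^{-2s}
\]
once $k$ is large; here the hypothesis $k>2^{1/(2\alpha-2s)}$ is exactly what makes $k^{2\alpha-2s}>2$ absorb constants. Since weights in $x$ correspond to derivatives in $\xi$ by \eqref{14}, and the multiplier is a smooth symbol of order $-2\alpha$, $\mathcal H_k^{\rm far}$ maps $H^{-s}_{\delta}\to H^{s}_{\delta}$ for every $\delta$ with norm $O(k^{-2s})$.

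\textbf{Step 2 (near part: the main obstacle).} For $\mathcal H_k^{\rm near}$, pass to polar coordinates $\xi=r\theta$ and write $|\xi|^{2\alpha}-k^{2\alpha}=(r-k)\,g_k(r)$ with $g_k(r)\sim 2\alpha k^{2\alpha-1}$ on the support of $\chi$. The operator then reduces to the classical Helmholtz-type limiting absorption estimate: the bound
\[
\Bigl\|\int \frac{\psi(r\theta)}{r-k-\mathrm i0}\,dr\Bigr\|\lesssim \|\psi\|_{H^{1/2+\epsilon}}
\]
on the $\xi$-side, via the trace theorem on spheres and the Hilbert transform. This is the Agmon $L^2_{1/2+\epsilon}\to L^2_{-1/2-\epsilon}$ estimate. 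It costs nothing in smoothness, since $\langle\xi\rangle^{2s}\sim k^{2s}$ on the support, and it yields the factor $k^{2s}\cdot k^{1-2\alpha}/k^{?}$. I would track the scaling by rescaling $\xi=k\eta$, using the explicit Green's function asymptotics in the Appendix if needed, to verify that the net power of $k$ is $k^{-2s}$ or better using $s\ge\alpha/2$. The lower bound $s\ge\alpha/2$ in the hypothesis is presumably exactly what is needed here, since it makes $k^{1-2\alpha+2s}$-type losses be dominated. I expect this $k$-bookkeeping on the sphere $|\xi|=k$, including uniformity of the trace constants in $k$, to be the main obstacle.

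\textbf{Step 3 (endpoint and interpolation).} The second endpoint is the $L^1$/$L^\infty$-type bound. Here I would use pointwise bounds on $G^k$ from the Appendix ($|G^k(x)|\lesssim$ a locally integrable singularity plus $k^{?}|x|^{-(d-1)/2}$) to obtain boundedness $L^{1}_{1/2+\epsilon}\to L^\infty_{-1/2-\epsilon}$ (or at the $H^{0}$ level) with a $k$-independent constant. Finally, apply complex interpolation in the scale $H^{\sigma,p}_\delta$, with fixed weights $\pm(1/2+\epsilon)$, between the endpoint $(\sigma,p)=(\pm s,2)$ with constant $k^{-2s}$ and the no-gain endpoint with constant $O(1)$. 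This produces the exponent $-2s(1-1/p)$ and regularity $\pm s/p$, with constant depending only on $\alpha,s,p,d,\epsilon$.
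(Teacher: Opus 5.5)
\textbf{The statement itself is false, so Step 2 cannot be completed.} You flagged the $k$-bookkeeping on the sphere $|\xi|=k$ as the main obstacle. No bookkeeping will give $k^{-2s}$, because the inequality fails for every admissible $s,p$.

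Take $\hat\varphi(\xi)=\eta(\xi-ke_1)$ with a fixed $\eta\in C_0^\infty(\mathbb R^d)$, $\eta\ge 0$, $\eta(0)>0$. By \eqref{14}, $\|\varphi\|_{H^{-s/p}_{1/2+\epsilon}}\lesssim k^{-s/p}$. If the theorem held, then
\[
|(\mathcal H_k\varphi,\varphi)_{L^2}|\le\|\mathcal H_k\varphi\|_{H^{s/p}_{-1/2-\epsilon}}\|\varphi\|_{H^{-s/p}_{1/2+\epsilon}}\lesssim k^{-2s}.
\]
But the $\mathrm i\pi\delta$ part of $(|\xi|^{2\alpha}-k^{2\alpha}-\mathrm i0)^{-1}$ gives
\[
\Im\,(\mathcal H_k\varphi,\varphi)_{L^2}=\frac{\pi k^{1-2\alpha}}{2\alpha(2\pi)^d}\int_{|\xi|=k}|\hat\varphi|^2\,dS\ \ge\ c\,k^{1-2\alpha}.
\]
This $\delta$-part is exactly the term $\widetilde C_{k,d,\alpha}G^{k,0}$ built into $G^k$. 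Since $2s\ge\alpha>2\alpha-1$, we have $k^{1-2\alpha}\gg k^{-2s}$, a contradiction for large $k$. The true norm is $\gtrsim k^{1-2\alpha+2s/p}$, larger than the claimed $k^{-2s(1-1/p)}$ by the factor $k^{1-2\alpha+2s}$, where $1-2\alpha+2s\ge 1-\alpha>0$. The near-sphere analysis you sketch ($k^{1-2\alpha}$ times an Agmon bound for $(|\xi|-k-\mathrm i0)^{-1}$ times $\langle k\rangle^{2s/p}$ from the weights) yields exactly this sharp rate; there is no further power of $k$ to gain.

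The paper reaches the stated rate only through an error in the same place. In \eqref{12}--\eqref{13}, the $r$-integral over $\{\tau_0\le|r^{2\alpha-2s}-k^{2\alpha-2s}|\le 2\}\cup\Gamma_{k,\tau_0}$ is replaced by the supremum $\langle k\rangle^{2s/p}/(\tau_0k^{2s})$ of the integrand. This omits the length of the path, which is $\sim k^{1-2\alpha+2s}$ for the real pieces and $\gtrsim\gamma_0\sim k^{1-\alpha+s}$ for the vertical ones. Likewise, the last $r$-integral in \eqref{19} is of order $k^{(1-2\alpha+2s)(1+\beta)}$, not $O(1)$.

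\textbf{Steps 1 and 3 are also set up incorrectly.} In the statement, $p$ is not a Lebesgue exponent. Every space is $L^2$-based, and $p$ enters only through the smoothness $\pm s/p$. So there is no $L^1\to L^\infty$ endpoint and Riesz--Thorin is not the mechanism. Moreover, $G^k$ is not uniformly bounded in $k$: its far field carries the factor $k^{\frac{d+1}{2}-2\alpha}$.

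With $\theta=1/p$, the statement corresponds to interpolating between two endpoints:
\begin{itemize}
\item $\theta=1$: a gain of $s$ on each side with an $O(1)$ constant;
\item $\theta=0$: no gain with constant $k^{-2s}$.
\end{itemize}
This is the reverse of your assignment.

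Your Step 1 inequality $k^{-2(\alpha-s)}\le Ck^{-2s}$ holds only for $s\le\alpha/2$. For $s\in(\alpha/2,\alpha)$ it is reversed: at $|\xi|\sim 2k$ the weighted multiplier $\langle\xi\rangle^{2s}/(|\xi|^{2\alpha}-k^{2\alpha})$ has size $k^{2s-2\alpha}\gg k^{-2s}$. So ``gain $s$ on each side with decay $k^{-2s}$'' is false even away from the sphere.

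The paper handles that region differently and correctly. Using $s\ge\alpha/2$, it shows $|r^{2\alpha}-k^{2\alpha}|\ge r^{2s}+k^{2s}$ outside a shell of width $\sim k^{1-2\alpha+2s}$ around $r=k$. It then applies Young's inequality, $r^{2s}+k^{2s}\gtrsim\langle r\rangle^{2s/p}k^{2s(1-1/p)}$, which produces both the $p$-dependence and the decay pointwise on the symbol. That far-region bound is sound. The failure is confined to the near-sphere part, and there the statement itself, not only your method, is at fault.
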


\begin{proof}
For any $\tau>0$, define an operator
\begin{align*}
    \mathcal{H}_{k, \tau} \varphi(x) := (2 \pi)^{ - d } \int_{ \mathbb{R}^{ d } } e^{\mathrm{i} x \cdot \xi} \frac{ \hat{\varphi} ( \xi ) }{ | \xi |^{ 2 \alpha } - k^{ 2 \alpha } - \mathrm{i} \tau } d \xi. 
\end{align*}
Let $\chi \in C_{ 0 }^{\infty}(\mathbb{R}^{ d })$ be a cutoff function satisfying $\chi ( x ) = 1$ for $| x | \leq 1$ and $\chi ( x ) = 0$ for $| x | \geq 2$. For any $p \in(1,+\infty)$, we obtain
\begin{align*}
    & ( 2 \pi )^{ d } ( \mathcal{H}_{ k, \tau } \varphi, \psi )_{ L^{ 2 } ( \mathbb{R}^{ d } ) } = ( 2 \pi )^{ d } \int_{ \mathbb{R}^{ d } } \mathcal{H}_{ k, \tau } \varphi ( x ) \overline{ \psi ( x ) } d x = \int_{ \mathbb{R}^{ d } } \mathcal{F} \big[ \mathcal{H}_{k, \tau} \varphi \big] (\xi)  \overline{\mathcal{F} \big[ \psi \big] } (\xi) d \xi
    \\ & = \int_{ 0 }^{ \infty } \frac{ 1 - \chi^{ 2 } ( r^{ 2 \alpha  - 2s } - k^{ 2 \alpha - 2s } ) } { r^{ 2 \alpha } - k^{ 2 \alpha } - \mathrm{i} \tau } d r \int_{ | \xi | = r } \hat{ \varphi } ( \xi ) \overline{ \hat{ \psi } } ( \xi ) d S ( \xi ) 
    \\ & \quad + \int_0^{\infty} \frac{ \chi^2( r^{ 2 \alpha - 2s } - k^{ 2 \alpha - 2s } ) }{ r^{ 2 \alpha } - k^{ 2 \alpha } - \mathrm{i} \tau} d r  \int_{ | \xi | = r } \hat{ \varphi } ( \xi ) \overline{ \hat{ \psi } } ( \xi ) d S ( \xi )
    \\ & = \int_{ 0 }^{ \infty } \frac{ 1 - \chi^2( r^{ 2 \alpha - 2s } - k^{ 2 \alpha - 2s } ) }{ r^{ 2 \alpha } - k^{ 2 \alpha } - \mathrm{i} \tau } d r \int_{ | \xi | = r } \hat{ \varphi } ( \xi ) \overline{ \hat{ \psi } } ( \xi ) d S ( \xi )
    \\ & \quad + \int_0^{\infty} \frac{\langle r\rangle^{ \frac{ 2s }{ p } } r^{ d - 1 } \chi^2( r^{ 2 \alpha - 2s } - k^{ 2 \alpha - 2s } ) }{ r^{ 2 \alpha } - k^{ 2 \alpha } -\mathrm{i} \tau } d r  \int_{ \mathbb{S}^{ d - 1 } } \langle k\rangle^{ - \frac{ s }{ p } } \hat{\varphi}(k \omega)\langle k\rangle^{ - \frac{ s }{ p } } \overline{ \hat{ \psi } } ( k \omega) d S(\omega) 
    \\ & \quad + \int_0^{\infty} \frac{\langle r\rangle^{ \frac{ 2s }{ p } } r^{ d - 1 } \chi^2( r^{ 2 \alpha - 2s } - k^{ 2 \alpha - 2s } ) }{ r^{ 2 \alpha } - k^{ 2 \alpha } - \mathrm{i} \tau} d r \int_{ \mathbb{S}^{ d - 1 }} \Big[ \langle r\rangle^{ - \frac{s}{p} } \hat{\varphi}(r \omega) \langle r\rangle^{ - \frac{s}{p} } \overline{ \hat{ \psi } } ( r \omega ) 
    \\ & \quad - \langle k\rangle^{ - \frac{s}{p} } \hat{\varphi}(k \omega) \langle k\rangle^{ - \frac{s}{p} } \overline{ \hat{ \psi } } ( k \omega ) \Big] d S(\omega) 
    \\ & = : I_{ 1 } ( \tau ) + I_{ 2 } ( \tau ) + I_{ 3 } ( \tau ).
\end{align*}

We next estimate the terms $I_{ j } ( \tau )$, $j = 1, 2, 3$, beginning with $I_{ 1 }( \tau )$. Recall Young's inequality,
\begin{align}\label{15}
    a b \leq \frac{a^{p}}{p} + \frac{b^{q}}{q} \quad \Rightarrow \quad p^{\frac{1}{p}} q^{\frac{1}{q}} a^{\frac{1}{p}} b^{\frac{1}{q}} \leq a + b, 
\end{align}
for any $a, b > 0$ and $p, q > 1$ satisfying $\frac{ 1 }{ p } + \frac{ 1 }{ q } = 1$. For $s \in [ \frac{ \alpha }{ 2 }, \alpha )$, whenever $| r^{ 2 \alpha - 2s } - k^{ 2 \alpha - 2s } | > 1$, we have
\begin{align}\label{137}
    \frac{ 1 }{ | r^{ 2 \alpha } - k^{ 2 \alpha } - \mathrm{i} \tau | } \leq \frac{ 1 }{ | r^{ 2 \alpha } - k^{ 2 \alpha } | } \leq \frac{ 1 }{ ( r^{ 2 s } + k^{ 2 s } ) | r^{ 2 \alpha - 2 s } - k^{ 2 \alpha - 2 s } | } \leq \frac{ 1 }{ r^{ 2 s } + k^{ 2 s } }.
\end{align}
We observe that $I_{ 1 } \neq 0$ only in the region where $| r^{ 2 \alpha - 2s } - k^{ 2 \alpha - 2s } | > 1$. Using Young’s inequality \eqref{15} together with \eqref{137}, and assuming $k > 1$, we obtain for any $1 < p < \infty$ and $\delta \geq 0$ that 
\begin{align}\label{18}
    | I_{ 1 } ( \tau ) | & = \Big| \int_{ 0 }^{ \infty } \frac{ 1 - \chi^{ 2 }( r^{ 2 \alpha - 2s } - k^{ 2 \alpha - 2s } ) }{ r^{ 2 \alpha } - k^{ 2 \alpha } - \mathrm{i} \tau} d r  \int_{ | \xi | = r } \hat{ \varphi } ( \xi ) \overline{ \hat{ \psi } } (\xi) d S ( \xi ) \Big|
    \notag \\ & \leq \int_{ 0 }^{ \infty } \frac{ 1 - \chi^{ 2 } ( r^{ 2 \alpha - 2s } - k^{ 2 \alpha - 2s } ) }{ r^{ 2 s } + k^{ 2 s } } d r  \int_{ | \xi | = r } | \hat{ \varphi } ( \xi ) | | \hat{ \psi } ( \xi ) | d S ( \xi ) 
    \notag \\ & \leq \int_0^{\infty} \frac{1 - \chi^2( r^{ 2 \alpha - 2 s } - k^{ 2 \alpha - 2 s } ) }{ ( r^{ 2 s } + 1) + ( k^{ 2 s } - 1 ) } d r  \int_{ | \xi | = r } | \hat{ \varphi } ( \xi ) | | \hat{ \psi } ( \xi ) | d S ( \xi ) 
    \notag \\ & \leq C_{p} \int_0^{\infty} \frac{1 - \chi^2( r^{ 2 \alpha - 2 s } - k^{ 2 \alpha - 2 s } ) }{ ( r^{ 2 s } + 1)^{ \frac{ 1 }{ p } }  ( k^{ 2 s } - 1)^{\frac{1}{q}} } d r \int_{ | \xi | = r } | \hat{ \varphi } ( \xi ) | | \hat{ \psi } ( \xi ) | d S ( \xi )  
    \notag \\ & \leq C_p k^{ -\frac{ 2s }{ q } } \int_0^{\infty} \langle r \rangle^{ - \frac{ 2s }{ p } } d r \int_{ | \xi | = r } | \hat{ \varphi } ( \xi ) | | \hat{ \psi } ( \xi ) | d S ( \xi ) 
    \notag \\ & \leq C_p k^{ - 2s ( 1 - \frac{1}{p} ) }\|\varphi\|_{ H_\delta^{ - s / p } ( \mathbb{R}^{ d } ) } \|\psi\|_{ H_{ \delta }^{ - s / p }(\mathbb{R}^{ d })},
\end{align}
where the constant $C_p$ is independent of $\tau$.

Next, we estimate $I_{ 2 } ( \tau )$. It can be directly observed that
\begin{align}\label{9}
    I_{ 2 } ( \tau ) = \int_{ \mathbb{S}^{ d - 1 } } \langle k\rangle^{ - \frac{ s }{ p } } \hat{\varphi}(k \omega) \langle k\rangle^{ - \frac{ s }{ p } } \overline{ \hat{ \psi } } ( k \omega ) \int_{ 0 }^{ \infty } \frac{\langle r\rangle^{ \frac{ 2s }{ p } } r^{ d - 1 } \chi^2( r^{ 2 \alpha - 2s } - k^{ 2 \alpha - 2s } ) }{ r^{ 2 \alpha } - k^{ 2 \alpha } -\mathrm{i} \tau } d r d S ( \omega ).
\end{align}
For some $\tau_{ 0 } \in ( 0, 1 )$ and $\tau \in ( 0, \tau_{ 0 } )$, we define $p_{ \tau } ( r ) : = p ( r ) = r^{ 2 \alpha } - k^{ 2 \alpha } - \mathrm{i} \tau$. Recall that $\chi^2( r^{ 2 \alpha - 2 s } - k^{ 2 \alpha - 2 s } ) = 0$ whenever $| r^{ 2 \alpha - 2 s } - k^{ 2 \alpha - 2 s } | > 2$. For any $r \in \{ r \in \mathbb{R} : \tau_0 \leq | r^{ 2 \alpha - 2 s } - k^{ 2 \alpha - 2 s } | \leq 2 \}$ and any $s \in [ \frac{ \alpha }{ 2 }, \alpha )$, we get
\begin{align}\label{7}
    | p ( r ) | \geq | r^{ 2 \alpha - 2 s } - k^{ 2 \alpha - 2 s } | ( r^{ 2 s } + k^{ 2 s } ) \geq \tau_{ 0 } ( r^{ 2 s } + k^{ 2 s } ) \geq \tau_{ 0 } k^{ 2s }.
\end{align}
When $k > 2^{ \frac{ 1 }{ 2 \alpha - 2 s } }$, there exist two positive numbers $r_1<k<r_2$ such that $| r_{ j }^{ 2 \alpha - 2 s } -  k^{ 2 \alpha - 2 s } | = \tau_{ 0 }$, $j = 1, 2$. Define
\begin{align*}
    \Gamma_{ k, \tau_{ 0 } } & : = \big\{r \in \mathbb{C} : r = r_{ 1 } - \mathrm{i} \gamma, \, \gamma \in [0, \gamma_{0} ] \big\} \cup \big\{r \in \mathbb{C} : r = \tilde{r} - \mathrm{i} \gamma_{0}, \, \tilde{r} \in [r_{1}, r_{2}] \big\}
    \\ & \quad \cup \big\{r \in \mathbb{C} : r = r_{ 2 } - \mathrm{i} \gamma, \, \gamma \in [0, \gamma_{0} ] \big\},
\end{align*}
where $\gamma_{ 0 } : = \max \{ ( r_{ 2 }^{ 2 } - k^{ 2 } )^{ \frac{ 1 }{ 2 } }, ( k^{ 2 } - r_{ 1 }^{ 2 } )^{ \frac{ 1 }{ 2 } } \}$.

We choose $\tau_{ 0 }$ sufficiently small such that for all $r \in \Gamma_{ k, \tau_{ 0 } }$ and $\alpha \in ( 0, 1 )$, $\Im ( r ) < 0$. So, by (\ref{7}), we have
\begin{align*}
    | ( k - \mathrm{ i } \gamma_{ 0 } )^{ 2 \alpha } - k^{ 2 \alpha } | & \geq | k - \mathrm{ i } \gamma_{ 0 } |^{ 2 \alpha } - k^{ 2 \alpha } \geq ( k^{ 2 } + \gamma_{ 0 }^{ 2 } )^{ \alpha } - k^{ 2 \alpha } 
    \\& \geq r_{ 2 }^{ 2 \alpha } - k^{ 2 \alpha } \geq \tau_{ 0 } k^{ 2s }.
\end{align*}
Then for any $r \in \Gamma_{ k, \tau_{ 0 } }$ and $\tau \in ( 0, \tau_{ 0 } )$, by \eqref{7}, we obtain that 
\begin{align}\label{8}
    | p ( r ) | & = | r^{ 2 \alpha } - k^{ 2 \alpha } - \mathrm{i} \tau | = | \Re ( r^{ 2 \alpha } ) - k^{ 2 \alpha } - \mathrm{i} ( \tau - \Im ( r^{ 2 \alpha } ) ) | \geq | r^{ 2 \alpha } - k^{ 2 \alpha } | 
    \notag \\ & \geq \min \{ r_{ 2 }^{ 2 \alpha } - k^{ 2 \alpha }, k^{ 2 \alpha } - r_{ 1 }^{ 2 \alpha }, | ( k - \mathrm{ i } \gamma_{ 0 } )^{ 2 \alpha } - k^{ 2 \alpha } | \} \geq \tau_{ 0 } k^{ 2 s }.
\end{align}
Combining (\ref{7}) and (\ref{8}), for any $ \tau \in (0, \tau_0)$, any $k > 2^{ \frac{ 1 }{ 2 \alpha - 2 s } }$, and any $r \in \{ r \in \mathbb{R}_{+} : \tau_{ 0 } \leq | r^{ 2 \alpha - 2 s } - k^{ 2 \alpha - 2 s } | \leq 2 \} \cup \Gamma_{ k, \tau_{ 0 } }$, we get 
\begin{align}\label{10}
    | p_{ \tau } ( r ) | \geq \tau_{ 0 } k^{ 2 s }.
\end{align}
Moreover, for $k > 2^{ \frac{ 1 }{ 2 \alpha - 2 s } }$ and $r \in \{r \in \mathbb{R}_{ + } :  \tau_{ 0 } \leq | r^{ 2 \alpha - 2 s } - k^{ 2 \alpha - 2 s } | \leq 2 \}$, we have
\begin{align}\label{138}
    \langle r \rangle^{ \frac{ 2 s }{ p } } \lesssim \langle k \rangle^{ \frac{ 2 s }{ p } }, \quad 0 < \Big( \frac{ r }{ k } \Big)^{ d - 1 } < C ( d ),
\end{align}
where the notation $a\lesssim b$ means that $a\leq C b$  for some constant $C>0$ independent of the wavenumber $k$, and the constant $C ( d ) > 0$ depends only on the spatial dimension. 

By means of Cauchy's integral theorem, we deform the integration contour for $r$ in (\ref{9}) from $\mathbb{R}_{ + }$ to $\{ r \in \mathbb{R}_{+} : \tau_{ 0 } \leq | r^{ 2 \alpha - 2 s } - k^{ 2 \alpha - 2 s } | \leq 2 \} \cup \Gamma_{k, \tau_0}$.  Applying \eqref{10} and \eqref{138}, we obtain for all  $\tau \in ( 0, \tau_{ 0 } )$ and $k > 2^{ \frac{ 1 }{ 2 \alpha - 2 s } }$,
\begin{align}\label{12}
     | I_{ 2 } ( \tau ) | & \leq \int_{|\xi|=k} \langle \xi \rangle^{ - \frac{s}{p} } |\hat{\varphi}(\xi)|  \langle\xi\rangle^{ - \frac{s}{p} } |\hat{\psi}(\xi)| \int_{ \{ r \in \mathbb{R}_{ + } : \tau_{ 0 } \leq | r^{ 2 \alpha - 2 s } - k^{ 2 \alpha - 2 s } | \leq 2 \} } \frac{ \langle r \rangle^{ 2s / p } ( r / k )^{ d - 1 } }{ \tau_{ 0 } k^{ 2 s } } d r  d S( \xi ) 
     \notag \\ & \quad + \int_{ | \xi | = k } \langle \xi \rangle^{ - \frac{s}{p} } | \hat{ \varphi } ( \xi ) | \langle \xi \rangle^{ - \frac{ s }{ p } } |\hat{ \psi } ( \xi ) | \int_{ \Gamma_{ k, \tau_{ 0 } } } \frac{ ( 1 + |r|^{2} )^{ s / p } ( | r | / k )^{ d - 1 } }{ \tau_0 k^{ 2s } } d r d S(\xi). 
\end{align}

Note that for any $r \in \Gamma_{ k, \tau_{ 0 } }$, 
\begin{align*}
    | r |^{2} & \leq r_{ 2 }^{ 2 } + \gamma_{ 0 }^{ 2 } \lesssim k^{ 2 }.
\end{align*}
Hence, for all $r \in \Gamma_{ k, \tau_{ 0 } }$, 
\begin{align}\label{11}
    ( 1 + | r |^{ 2 } )^{ \frac{ s }{ p } } \lesssim \langle k \rangle^{ \frac{ 2 s }{ p } }, \quad \big( \frac{ | r | }{ k } \big)^{ d - 1 } \lesssim 1.
\end{align}
Using (\ref{14}), (\ref{138}), (\ref{11}), and \cite[Remark 13.1]{Eskin-11}, we can proceed from (\ref{12}) to the following bound:
\begin{align}\label{13}
    | I_{ 2 } ( \tau ) | & \leq C_{ \alpha, s, \tau_0, p, d } \int_{ | \xi | = k } \langle \xi \rangle^{ - \frac{ s }{ p } } | \hat{ \varphi } ( \xi ) | \langle \xi \rangle^{ - \frac{ s }{ p } } | \hat{ \psi } ( \xi ) |
    \notag \\ & \quad \times \int_{ \Gamma_{ k, \tau_{ 0 } } \cup \{ r \in \mathbb{ R }_{ + } : \tau_{ 0 } \leq | r^{ 2 \alpha - 2 s } - k^{ 2 \alpha - 2 s } | \leq 2 \} } \frac{\langle k\rangle^{ \frac{ 2s }{p} }}{\tau_0 k^{ 2s } } d r d S(\xi) 
    \notag \\ & \leq C_{ \alpha, s, \tau_0, p, d, \epsilon } k^{ - 2s ( 1 - \frac{1}{p}  ) } \|\langle\cdot\rangle^{ - \frac{s}{p} } \hat{\varphi}(\cdot)\|_{ H^{ \frac{1}{2} + \epsilon } ( \mathbb{R}^{ d } ) } \|\langle\cdot\rangle^{ - \frac{s}{p} } \hat{\psi}(\cdot)\|_{ H^{ \frac{1}{2} + \epsilon } ( \mathbb{R}^{ d } ) } 
    \notag \\ & \leq C_{ \alpha, s, \tau_0, p, d, \epsilon } k^{ - 2s ( 1 - \frac{1}{p}  ) } \|\varphi\|_{ H_{ 1 / 2 + \epsilon }^{ - s / p } ( \mathbb{R}^{ d } ) } \|\psi\|_{H_{ 1 / 2 + \epsilon }^{ - s / p } ( \mathbb{R}^{ d } ) },
\end{align}
where the constant $C_{ \alpha, s, \tau_0, p, d, \epsilon } > 0$ is independent of $\tau$.  It is worth mentioning that the appearance of the arbitrarily small positive number $\epsilon$ in the weighted Sobolev norm $\| \cdot \|_{ H^{ 1 / 2 + \epsilon } }$ in (\ref{13}) is required because the Sobolev order must be strictly larger than $1 / 2$ (cf. \cite[Remark 13.1]{Eskin-11}).

Finally, we turn to the estimate of $I_{ 3 } ( \tau ) $. Let $\mathbb{F}_{ r } ( \omega ) : = \hat{ \varphi } ( r \omega )$, $\mathbb{G}_{ r } ( \omega ) : = \overline{ \hat{ \psi } } ( r \omega )$, and $\mathbb{S}_{ r }^{ d - 1 }$ be the centered sphere of radius $r$ in $\mathbb{R}^{ d }$. A direct computation yields
\begin{align*}
    | I_{ 3 } ( \tau ) | & = \Big| \int_0^{ \infty } \frac{\langle r\rangle^{ \frac{ 2s }{ p } } r^{ d - 1 } \chi^2( r^{ 2 \alpha - 2 s } - k^{ 2 \alpha - 2s } ) } { r^{ 2 \alpha } - k^{ 2 \alpha } - \mathrm{i} \tau } d r \int_{\mathbb{S}^{ d - 1 } } \Big( \langle r\rangle^{ - \frac{ 2s }{ p } } \mathbb{F}_r \mathbb{G}_r - \langle k \rangle^{ - \frac{ 2s}{ p } } \mathbb{F}_k \mathbb{G}_k \Big) d S(\omega) \Big| 
    \\ & \leq \Big| \int_0^{\infty} \frac{\langle r\rangle^{ \frac{ 2s }{ p } } r^{ d - 1 } \chi^2( r^{ 2 \alpha - 2 s } - k^{ 2 \alpha - 2 s } ) } { r^{ 2 \alpha } - k^{ 2 \alpha } - \mathrm{i} \tau } d r \int_{ \mathbb{S}^{ d - 1 } } \langle r\rangle^{ - \frac{ s }{ p } } \mathbb{F}_r \Big( \langle r\rangle^{ - \frac{ s }{ p } } \mathbb{G}_r - \langle k \rangle^{ - \frac{ s }{ p } } \mathbb{G}_k \Big) d S(\omega) \Big| 
    \\ & \quad + \Big| \int_0^{\infty} \frac{\langle r\rangle^{ \frac{ 2s }{ p } } r^{ d - 1 } \chi^2( r^{ 2 \alpha - 2 s } - k^{ 2 \alpha - 2 s } ) } { r^{ 2 \alpha } - k^{ 2 \alpha } - \mathrm{i} \tau } d r \int_{\mathbb{S}^{ d - 1 } } \langle k\rangle^{ - \frac{ s }{ p } } \mathbb{G}_{k} \Big( \langle r \rangle^{ - \frac{ s }{ p } } \mathbb{F}_r - \langle k \rangle^{ - \frac{ s }{ p } } \mathbb{F}_k \Big) d S(\omega) \Big| 
    \\ & \leq \int_{ 0 }^{ \infty } \frac{ \langle r \rangle^{ \frac{ 2s }{ p } } \chi^{ 2 } ( r^{ 2 \alpha - 2 s } - k^{ 2 \alpha - 2 s } ) }{ | r^{ 2 \alpha } - k^{ 2 \alpha } | }  \| \langle \cdot \rangle^{ - \frac{ s }{ p } } \hat{ \varphi } ( \cdot ) \|_{ L^{ 2 } ( \mathbb{S}^{ d - 1 }_{ r } ) } r^{ \frac{ d - 1 }{ 2 } } 
    \\ & \quad \times\Big( \int_{ \mathbb{S}^{ d - 1 } }  | \langle r \rangle^{ - \frac{ s }{ p } } \mathbb{G}_r - \langle k \rangle^{ - \frac{ s }{ p }  } \mathbb{G}_k |^2  d S(\omega) \Big) ^{\frac{1}{2}} d r + \int_0^{\infty} \frac{\langle r \rangle^{ \frac{ 2s }{ p } } \chi^2(  r^{ 2 \alpha - 2 s } - k^{ 2 \alpha - 2 s } ) }{ | r^{ 2 \alpha } - k^{ 2 \alpha } | } r^{ \frac{ d - 1 }{ 2 } } 
    \\ & \quad \times \Big( \int_{\mathbb{S}^{ d - 1 } } | \langle r \rangle^{ - \frac{ s }{ p } } \mathbb{F}_r - \langle k \rangle^{ - \frac{ s }{ p } } \mathbb{F}_k |^2 d S(\omega) \Big)^{\frac{1}{2}} r^{ \frac{ d - 1 }{ 2 } } k^{ - \frac{ d - 1 }{ 2 } } \| \langle \cdot \rangle^{ - \frac{ s }{ p } } \hat{ \psi } ( \cdot ) \|_{ L^{ 2 } ( \mathbb{S}_{ k }^{ d - 1 } ) } d r.
\end{align*}
Using \cite[Remark 13.1]{Eskin-11}, \cite[(13.28)]{Eskin-11}, Young's inequality (\ref{15}), and the Fourier characterization (\ref{14}), we obtain
\begin{align}\label{19}
    | I_{ 3 } ( \tau ) | & \leq C_{ s, d, \alpha, \beta, \epsilon, p } \int_{ 0 }^{ \infty } \frac{ \langle r \rangle^{ \frac{ 2s }{ p } } \chi^{ 2 } ( r^{ 2 \alpha - 2 s } - k^{ 2 \alpha - 2 s } ) }{ | r^{ 2 \alpha - 2 s } - k^{ 2 \alpha - 2s  } | ( r^{ 2 s } + k^{ 2 s } ) }  \| \langle \cdot \rangle^{ - \frac{ s }{ p } } \hat{\varphi} ( \cdot ) \|_{ H^{ \frac{ 1 }{ 2 } + \epsilon }(\mathbb{R}^{ d })} \notag\\ 
    &\quad \times |r-k|^{ \beta }  \| \langle \cdot \rangle^{ - \frac{ s }{ p } } \hat{\psi} ( \cdot ) \|_{H^{ \frac{1}{2} + \epsilon } ( \mathbb{R}^{ d } ) } d r\notag \\
    & \leq C_{ s, d, \alpha, \beta, \epsilon, p } \int_0^{\infty} \frac{\langle r\rangle^{ \frac{ 2s }{ p } } \chi^2( r^{ 2 \alpha - 2 s } - k^{ 2 \alpha - 2 s } ) }{ ( r^{ 2 s } + 1 )^{ \frac{1}{p} }( k^{ 2 s } - 1 )^{ 1 -\frac{1}{p} }}  \frac{ | r - k |^{ \beta } }{ | r^{ 2 \alpha - 2 s } - k^{ 2 \alpha - 2 s } | } d r \notag \\ 
    & \quad \times \| \langle \cdot \rangle^{ - \frac{ s }{ p } } \hat{\varphi} ( \cdot ) \|_{ H^{ \frac{ 1 }{ 2 } + \epsilon }(\mathbb{R}^{ d })} \| \langle \cdot \rangle^{ - \frac{ s }{ p } } \hat{\psi} ( \cdot ) \|_{H^{ \frac{1}{2} + \epsilon }(\mathbb{R}^{ d })}\notag \\
    & \leq C_{ s, d, \alpha, \beta, \epsilon, p } k^{ - 2s ( 1 -\frac{1}{p} ) } \int_0^{\infty} \chi^2( r^{ 2 \alpha - 2 s } - k^{ 2 \alpha - 2 s } )\frac{ | r - k |^{ \beta } }{ | r^{ 2 \alpha - 2 s } - k^{ 2 \alpha - 2 s } | } d r \notag\\
    &\quad \times \| \varphi \|_{ H^{ - s / p }_{ 1 / 2 + \epsilon }(\mathbb{R}^{ d })}  \| \psi \|_{H^{ - s / p }_{ 1 / 2 + \epsilon }(\mathbb{R}^{ d })}\notag \\ 
    & \leq C_{ s, d, \alpha, \beta, \epsilon, p } k^{ - 2s ( 1 -\frac{1}{p} ) } \| \varphi \|_{H^{ - s / p }_{ 1 / 2 + \epsilon } ( \mathbb{R}^{ d } ) }  \| \psi \|_{ H^{ - s / p }_{ 1 / 2 + \epsilon } ( \mathbb{R}^{ d } ) },
\end{align}
where $\epsilon > 0$ and $\beta \in ( 0, \epsilon )$. 

Combining estimates (\ref{18}), (\ref{13}) and (\ref{19}) gives
\begin{align*}
    | ( \mathcal{H}_{ k, \tau } \varphi, \psi )_{ L^{ 2 } ( \mathbb{R}^{ d } ) } | \leq C k^{ - 2s ( 1 - \frac{1}{p} )} \|\varphi\|_{H_{ 1 / 2 + \epsilon }^{ - s / p }(\mathbb{R}^{ d } ) } \|\psi\|_{ H_{ 1 / 2 + \epsilon }^{ - s / p }( \mathbb{R}^{ d } ) }, 
\end{align*}
which in turn implies
\begin{align}\label{25}
    \| \mathcal{H}_{ k, \tau } \varphi \|_{ H_{ - 1 / 2 - \epsilon }^{ s / p } ( \mathbb{R}^{ d } ) } \leq C k^{ - 2s ( 1 - \frac{1}{p} ) } \|\varphi\|_{ H_{ 1 / 2 + \epsilon }^{ - s / p } ( \mathbb{R}^{ d } ) },
\end{align}
where the constant $C > 0$ is independent of $\tau$.

Subsequently, we investigate the asymptotic behavior of $\mathcal{H}_{k, \tau} \varphi$ as $\tau\to 0$. In addition, we derive estimates for the differences $| I_{ j } ( \tau_{ 1 } ) - I_{ j } ( \tau_{ 2 } ) |$ for arbitrary positive parameters $\tau_{ 1 }, \tau_{ 2 } < \tilde{ \tau }$ and for each $j = 1, 2, 3$.

Proceeding as in the previous derivation, we obtain
\begin{align}\label{20}
    | I_{ 1 } ( \tau_{ 1 } ) - I_{ 1 } ( \tau_{ 2 } ) | & = \Big| \int_0^{\infty} \Big( \frac{1 - \chi^2(  r^{ 2 \alpha - 2 s } - k^{ 2 \alpha - 2 s } ) }{ r^{ 2 \alpha } - k^{ 2 \alpha } - \mathrm{i} \tau_{1} } - \frac{1 - \chi^2(  r^{ 2 \alpha - 2 s } - k^{ 2 \alpha - 2 s } ) }{ r^{ 2 \alpha } - k^{ 2 \alpha } - \mathrm{i} \tau_{2} } \Big) d r 
    \notag \\ & \quad \times \int_{ | \xi | = r } \hat{\varphi} ( \xi )  \overline{ \hat{ \psi } } ( \xi ) d S( \xi ) \Big| 
    \notag \\ & \leq \Big| \int_0^{\infty} \frac{ \mathrm{i} ( \tau_{1} - \tau_{2}  ) ( 1 - \chi^2(  r^{ 2 \alpha - 2 s } - k^{ 2 \alpha - 2 s } ) ) }{ ( r^{ 2 \alpha } - k^{ 2 \alpha } - \mathrm{i} \tau_{1} ) ( r^{ 2 \alpha } - k^{ 2 \alpha } - \mathrm{i} \tau_{2} ) } d r  \int_{|\xi|=r} \hat{\varphi}(\xi) \overline{ \hat{ \psi } } (\xi) d S(\xi) \Big|
    \notag \\ & \leq \int_0^{\infty} \frac{ | \tau_{1} - \tau_{2}  | ( 1 - \chi^2(  r^{ 2 \alpha - 2 s } - k^{ 2 \alpha - 2 s } ) ) }{ ( r^{ 2 \alpha } - k^{ 2 \alpha} )^{ 2 } - \tau_{ 1 } \tau_{ 2 } } d r \int_{|\xi|=r}| \hat{\varphi}(\xi)| |\hat{\psi}(\xi)| d S(\xi)\notag \\ 
    & \leq C_{p} \int_0^{\infty} \frac{ | \tau_{1} - \tau_{2} | ( 1 - \chi^2(  r^{ 2 \alpha - 2 s } - k^{ 2 \alpha - 2 s } ) ) }{ ( r^{ 4s } + 1 )^{ \frac{ 1 }{ p } } ( k^{ 4s } - 1 - \tau_{ 1 } \tau_{ 2 } )^{ \frac{ 1 }{ q } } } d r \int_{|\xi|=r}| \hat{\varphi}(\xi)| |\hat{\psi}(\xi)| d S(\xi) \notag \\ 
    & \leq \tilde{\tau} C_p k^{ - 4s ( 1 - \frac{1}{p} ) }\|\varphi\|_{H_\delta^{ - 2 s / p }(\mathbb{R}^{ d })}\|\psi\|_{H_\delta^{ - 2 s / p }(\mathbb{R}^{ d })}
    \notag \\ & \leq \tilde{ \tau } C_{ p } k^{ - 4s ( 1 - \frac{ 1 }{ p } ) } \| \varphi \|_{ H_{ \delta }^{ - s / p } ( \mathbb{R}^{ d } ) } \| \psi \|_{ H_{ \delta }^{ - s / p } ( \mathbb{R}^{ d } ) }.
\end{align}
Similarly,
\begin{align}\label{21}
    | I_{ 2 } ( \tau_{ 1 } ) - I_{ 2 } ( \tau_{ 2 } ) | & = \Big| \int_{\mathbb{S}^{ d - 1 } } \langle k\rangle^{ - \frac{ s }{ p } } \hat{\varphi}(k \omega) \langle k\rangle^{ - \frac{ s }{ p } } \overline{ \hat{ \psi } } ( k \omega ) 
    \notag \\ & \quad \times \int_0^{\infty} \frac{ \mathrm{i} ( \tau_{1} - \tau_{2} ) \langle r\rangle^{ \frac{ 2s }{ p } } r^{ d - 1 } \chi^2( r^{ 2 \alpha - 2 s } - k^{ 2 \alpha - 2 s } ) }{ ( r^{ 2 \alpha } - k^{ 2 \alpha } - \mathrm{i} \tau_{1} ) ( r^{ 2 \alpha } - k^{ 2 \alpha } -\mathrm{i} \tau_{2} ) } d r d S(\omega) \Big|
    \notag \\ & \leq \int_{|\xi|=k} \langle \xi \rangle^{ - \frac{ s }{ p } } |\hat{\varphi}(\xi)| \langle\xi\rangle^{ - \frac{ s }{ p } } |\hat{\psi}(\xi)| 
    \notag \\ & \quad \times \int_{\{ r \in \mathbb{R}_{+} : \tau_{ 0 } \leq | r^{ 2 \alpha - 2 s } - k^{ 2 \alpha - 2 s } | \leq 2 \} } \frac{ | \tau_{1} - \tau_{2} | \langle r\rangle^{ \frac{ 2s }{ p } } }{ \tau_0^{2} k^{ 4s } } \Big( \frac{ r }{ k } \Big)^{ d - 1 } d r d S(\xi)
    \notag \\ & \quad + \int_{|\xi|=k} \langle \xi \rangle^{ - \frac{ s }{ p } } |\hat{\varphi}(\xi)| \langle\xi\rangle^{ - \frac{ s }{ p } } | \hat{\psi}(\xi) | \int_{ \Gamma_{ k, \tau_0 } } \frac{ | \tau_{1} - \tau_{2} | ( 1 + |r|^{2} )^{ \frac{ s }{ p } } }{ \tau_0^{2} k^{ 4s } }\Big( \frac{ |r| }{ k } \Big)^{ d - 1 } d r d S(\xi)
    \notag \\ & \leq C_{ \alpha, s, p, \tau_0, d } \int_{|\xi|=k}  \langle \xi \rangle^{ - \frac{ s }{ p } } | \hat{\varphi}(\xi) | \langle \xi \rangle^{ - \frac{ s }{ p } } | \hat{\psi}(\xi) | 
    \notag \\ & \quad \times \int_{ \Gamma_{k, \tau_0} \cup \{r \in \mathbb{R}_{+} : \tau_{ 0 } \leq | r^{ 2 \alpha - 2 s } - k^{ 2 \alpha - 2 s } | \leq 2 \} } \frac{ | \tau_{1} - \tau_{2} | \langle k\rangle^{ \frac{ 2s }{ p } }}{ \tau_0^{2} k^{ 4s } } d r d S(\xi) 
    \notag \\ & \leq \tilde{\tau} C_{ \alpha, s, p, \tau_0, d, \epsilon} k^{ - 4s ( 1 - \frac{1}{2p} ) } \|\varphi\|_{H_{ 1 / 2 + \epsilon}^{ - s / p } ( \mathbb{R}^{ d } ) } \|\psi\|_{H_{ 1 / 2 + \epsilon}^{ - s / p } ( \mathbb{R}^{ d } ) }.
\end{align}
By (\ref{15}), we have for any $\gamma \in ( 0, 1 )$ and any $z \in \mathbb{C}$ that 
\begin{align*}
    | z | \geq C_{ \gamma } ( \Re z )^{ \gamma } ( \Im z)^{ 1 - \gamma }. 
\end{align*}
Assuming $\tau_1 \leq \tau_2$, we obtain 
\begin{align*}
    \Big| \frac{ 1 }{ r^{ 2 \alpha } - k^{ 2 \alpha } - \mathrm{i} \tau_1 } - \frac{ 1 }{ r^{ 2 \alpha } - k^{ 2 \alpha } - \mathrm{i} \tau_2 } \Big| \leq \frac{ 1 }{ | r^{ 2 \alpha } - k^{ 2 \alpha } | } \frac{ C_{ \gamma } \tau_2 }{ | r^{ 2 \alpha } - k^{ 2 \alpha } |^{ \gamma } \tau_{ 2 }^{ 1 - \gamma } } \leq \frac{ C_{ \gamma } \tau_{ 2 }^{ \gamma } }{ | r^{ 2 \alpha } - k^{ 2 \alpha } |^{ 1 + \gamma } }.
\end{align*}
Consequently, for $\beta \in ( 0, \epsilon )$ and $\gamma \in ( 0, \frac{ 1 + 2 s - 2 \alpha }{ 2 \alpha } )$, we get 
\begin{align}\label{22}
    | I_{ 3 } ( \tau_{ 1 } ) - I_{ 3 } ( \tau_{ 2 } ) | & \leq C_{ \gamma, s, d, \alpha, \beta, \epsilon, p } k^{ - 2s ( 1 -\frac{1}{p} ) } \int_{ 0 }^{ \infty } \frac{ \chi^{ 2 } ( r^{ 2 \alpha - 2 s } - k^{ 2 \alpha - 2 s } ) | r - k |^{ \beta } }{ | r^{ 2 \alpha - 2 s } - k^{ 2 \alpha - 2 s } | } 
    \notag \\ & \quad \times \frac{  \tau_{ 2 }^{ \gamma } }{ | r^{ 2 \alpha } - k^{ 2 \alpha } |^{ \gamma } } d r  \| \varphi \|_{ H^{ - s / p }_{ 1 / 2 + \epsilon }(\mathbb{R}^{ d })} \| \psi \|_{H^{ - s / p }_{ 1 / 2 + \epsilon }(\mathbb{R}^{ d })}
    \notag \\ & \leq \tilde{ \tau }^{ \gamma } C_{ \gamma, s, d, \alpha, \beta, \epsilon, p } k^{ - 2s ( 1 -\frac{1}{p} ) } \| \varphi \|_{ H^{ - s / p }_{ 1 / 2 + \epsilon }( \mathbb{R}^{ d } ) } \| \psi \|_{ H^{ - s / p }_{ 1 / 2 + \epsilon } ( \mathbb{R}^{ d } ) }.
\end{align}

Combining (\ref{20}), (\ref{21}), and (\ref{22}), we have for all $\tau_{ 1 }, \tau_{ 2 } \in ( 0, \tilde{ \tau } )$ that
\begin{align*}
    \| \mathcal{ H }_{ k, \tau_{ 1 } } \varphi - \mathcal{H}_{k, \tau_2} \varphi\|_{H^{  s / p }_{ - 1 / 2 - \epsilon }(\mathbb{R}^{ d })} \lesssim \tilde{ \tau }^{ \gamma } \|\varphi\|_{H^{ - s / p }_{ 1 / 2 + \epsilon } ( \mathbb{R}^{ d } ) }.
\end{align*}
Thus, the family $\mathcal{H}_{k, \tilde{\tau}} \varphi$ is Cauchy in $H^{ s/p }_{ - 1/2 - \epsilon } ( \mathbb{R}^{ d } )$ and therefore convergent. Moreover,
\begin{align}\label{24}
    \lim _{\tilde{\tau} \rightarrow 0^{+}} \mathcal{H}_{k, \tilde{\tau}} \varphi = \mathcal{H}_k \varphi \quad \text { in } \, {H^{ s / p }_{ - 1 / 2 - \epsilon }(\mathbb{R}^{ d })}.
\end{align}
Finally, using (\ref{25}) and (\ref{24}), we conclude that
\begin{align*}
    \| \mathcal{H}_{ k } \varphi \|_{H_{ - 1 / 2 - \epsilon }^{ s / p }(\mathbb{R}^{ d })} \leq C_{ \alpha, s, p, d, \epsilon } k^{ - 2s ( 1 - \frac{1}{p} ) } \|\varphi\|_{H_{ 1 / 2 + \epsilon }^{ - s / p } ( \mathbb{R}^{ d } ) },
\end{align*}
which completes the proof. 
\end{proof}

Define the operator $\mathcal{K}_{ k }$ by 
\begin{align*}
    (\mathcal{K}_k \varphi)(x) : = \int_{\mathbb{R}^{ d } } G^{k} ( x, z )  q ( z ) \varphi ( z ) d z, \quad x \in \mathbb{R}^{ d }.
\end{align*}
This operator exhibits the same decay rate in the wavenumber $k$ as the operator $\mathcal{H}_{ k }$. 

\begin{theorem}\label{112}
Let $s \in [ \frac{ \alpha }{ 2 }, \alpha )$, $\epsilon > 0$, and $p \in ( 1, \infty )$. If $k > 2^{ \frac{1}{ 2 \alpha - 2 s } }$ and $q \in L^{ 2 } ( U )$, then 
\begin{align*}
    \| \mathcal{ K }_{ k } \varphi \|_{ H_{ - 1 / 2 - \epsilon }^{ s / p } ( \mathbb{R}^{ d } ) } \lesssim k^{ - 2 s ( 1 - \frac{ 1 }{ p } ) } \| \varphi \|_{ H_{ - 1 / 2 - \epsilon }^{ s / p }( \mathbb{ R }^{ d } ) }, \quad \forall\, \varphi \in H_{ - 1 / 2 - \epsilon }^{ s / p } ( \mathbb{ R }^{ d } ).
\end{align*}
\end{theorem}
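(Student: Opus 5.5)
Since $\mathcal{K}_k\varphi = \mathcal{H}_k(q\varphi)$, Theorem~\ref{6} reduces the claim to a multiplication estimate. Applying Theorem~\ref{6} with $q\varphi$ in place of $\varphi$ gives
\begin{align*}
\| \mathcal{K}_k \varphi \|_{H^{s/p}_{-1/2-\epsilon}(\mathbb{R}^d)} \le C_{\alpha,s,p,d,\epsilon}\, k^{-2s(1-\frac1p)} \| q\varphi \|_{H^{-s/p}_{1/2+\epsilon}(\mathbb{R}^d)} .
\end{align*}
It therefore suffices to show that the multiplication map $\varphi\mapsto q\varphi$ is bounded from $H^{s/p}_{-1/2-\epsilon}(\mathbb{R}^d)$ into $H^{-s/p}_{1/2+\epsilon}(\mathbb{R}^d)$, with a bound independent of $k$. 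The implicit constant in the conclusion then depends only on $q$, $U$, $\alpha,s,p,d,\epsilon$.

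For the multiplication bound I would first exploit the compact support of $q$. Fix a cutoff $\theta\in C_0^\infty(\mathbb{R}^d)$ with $\theta=1$ on a neighbourhood of $\overline U$, so that $q\varphi=q\,\theta\varphi$. Because $\langle x\rangle^{\pm(1/2+\epsilon)}$ is bounded above and below on $\operatorname{supp}\theta$, the weights can be exchanged at the cost of a constant depending on $U$:
\begin{align*}
\|q\varphi\|_{H^{-s/p}_{1/2+\epsilon}}\lesssim \|q\theta\varphi\|_{H^{-s/p}},\qquad \|\theta\varphi\|_{H^{s/p}}\lesssim\|\varphi\|_{H^{s/p}_{-1/2-\epsilon}} .
\end{align*}
The second inequality holds because multiplication by the Schwartz function $\theta\langle x\rangle^{1/2+\epsilon}$ is bounded on $H^{s/p}$, and hence $\langle\cdot\rangle^{-1/2-\epsilon}\varphi\mapsto\theta\varphi$ is bounded.

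The remaining step is the unweighted estimate
\begin{align*}
\|q\,w\|_{H^{-t}(\mathbb{R}^d)}\lesssim \|q\|_{L^2}\,\|w\|_{H^{t}(\mathbb{R}^d)},\qquad t=s/p\in(0,1).
\end{align*}
I would prove it by duality: $\langle q w, v\rangle=\int q\, w\, v$. Hölder's inequality bounds this by $\|q\|_{L^2}\|w\|_{L^{r}}\|v\|_{L^{r}}$ with $1/r=1/4$. Sobolev embedding $H^t\hookrightarrow L^4$ needs $t\ge d/4$, and this is the main obstacle. For small $t$ and $d\ge 2$, an $L^2$ potential is too rough to map $H^t$ into $H^{-t}$ by Sobolev embedding alone.

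I see two ways around this. The first is to use that $q$ is compactly supported with the extra smoothness $q\in H^{N_\alpha}$ from Assumption~\ref{117}, which gives $q\in L^\infty$ when $d\ge2$. Then $\|qw\|_{H^{-t}}\le\|qw\|_{L^2}\le\|q\|_{L^\infty}\|w\|_{L^2}\le \|q\|_{L^\infty}\|w\|_{H^t}$. The second route covers $q\in L^2$ only. It uses $H^t\hookrightarrow L^{2d/(d-2t)}$ and its dual $L^{2d/(d+2t)}\hookrightarrow H^{-t}$ on the bounded set $U$, which requires $q\in L^{d/(2t)}$. This suffices in $d=1$, where $N_\alpha=0$. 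In $d=1$ it is even simpler: $H^t$ functions localized to $U$ lie in $L^{2/(1-2t)}$, and the product with $L^2$ is then controlled in the dual space.

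If neither route closes, I would fall back on the observation that for the quantitative statement only $q\in L^\infty(U)$ is really needed. I would then check that Assumption~\ref{117} supplies this in each dimension, and note that the stated $L^2$ hypothesis is used together with that regularity. Once the multiplication bound holds, chaining the three displayed inequalities yields the theorem.
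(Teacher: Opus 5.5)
Your reduction is the paper's own. There, the entire proof writes $\mathcal{K}_k\varphi=\mathcal{H}_k(q\varphi)$, takes the claim $q\varphi\in H^{-s/p}_{1/2+\epsilon}(\mathbb{R}^d)$ for $q\in L^2(U)$ from \cite[Theorem 2.2]{Li-Liu-Ma-CMP-21}, and applies Theorem~\ref{6}.

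The obstacle you hit when proving that multiplication bound is genuine and sharp, and it occurs in every dimension, including $d=1$. Set $t=s/p$, and take $q=|x|^{-a}\theta$ and $\varphi=|x|^{-b}\theta$ with $a<\frac{d}{2}$ and $b<\frac{d}{2}-t$, so that $q\in L^2$ and $\varphi\in H^t$. The product $|x|^{-(a+b)}\theta^2$ has Fourier transform of size $|\xi|^{a+b-d}$ at infinity. It is therefore not in $H^{-t}$ once $a+b\ge\frac{d}{2}+t$, and this can be arranged exactly when $t<\frac{d}{4}$. So for $s/p<\frac{d}{4}$, multiplication by a general $q\in L^2(U)$ does not map $H^{s/p}_{-1/2-\epsilon}$ into $H^{-s/p}_{1/2+\epsilon}$. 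The assertion the paper takes from that citation cannot hold in that range either.

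The gap in your proposal is that neither repair closes this, and your one-dimensional claim is false.

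\emph{First route.} This proves a different statement. The bound $q\in L^\infty$ comes from Assumption~\ref{117}, not from the theorem's hypothesis $q\in L^2(U)$. In $d=1$ that assumption gives nothing beyond $L^2$, because $N_\alpha=0$, so your fallback (``Assumption~\ref{117} supplies this in each dimension'') fails exactly there.

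\emph{Second route.} This does not suffice in $d=1$. Your own criterion $q\in L^{d/(2t)}$ reads $q\in L^{1/(2t)}(U)$, which follows from $q\in L^2(U)$ only if $t\ge\frac{1}{4}$. The ``even simpler'' one-dimensional version has the same threshold: $qw\in L^{1/(1-t)}$ lies in the dual of $L^{2/(1-2t)}$ on a bounded set only if $1-t\le\frac{1}{2}+t$.

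This regime is not vacuous. The paper's one-dimensional uniqueness proof uses $s/p=\frac{1-m}{2}+\epsilon_2$ with $m\in(\frac{3}{2}-2\alpha,1)$, which is below $\frac{1}{4}$ whenever $m>\frac{1}{2}$.

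What your argument actually yields is the estimate for $q\in L^\infty(U)$ in any dimension, or for $q\in L^2(U)$ when $s/p\ge\frac{d}{4}$ (your $L^2\cdot L^4\cdot L^4$ H\"older bound). That restriction has to be written into the hypotheses. It cannot be claimed to follow from $q\in L^2(U)$ for all $s\in[\frac{\alpha}{2},\alpha)$ and $p\in(1,\infty)$.

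Separately, your weight exchange needs the standard equivalence $\|g\|_{H^\sigma_\delta}\simeq\|\langle\cdot\rangle^\delta g\|_{H^\sigma}$. In $H^\sigma_\delta$ the weight sits outside $(I-\Delta)^{\sigma/2}$, so boundedness of $\langle x\rangle^{\pm\delta}$ on $\operatorname{supp}\theta$ alone does not justify it.
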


\begin{proof}
By \cite[Theorem 2.2]{Li-Liu-Ma-CMP-21}, for any $\varphi \in H_{ - 1 / 2 - \epsilon }^{ s / p } ( \mathbb{ R }^{ d } )$ and $q \in L^{ 2 } ( U )$, we have
\begin{align}\label{106}
    q \varphi \in H_{ 1 / 2 + \epsilon }^{ - s / p } ( \mathbb{ R }^{ d } ).
\end{align}
It follows from Theorem \ref{6} that 
\begin{align*}
    \| \mathcal{ K }_{ k } \varphi \|_{ H_{ - 1 / 2 - \epsilon }^{ s / p } ( \mathbb{R}^{ d } ) } &= \| \mathcal{ H }_{ k } ( q \varphi ) \|_{ H_{ - 1 / 2 - \epsilon }^{ s / p } ( \mathbb{R}^{ d } ) }\\
    &\lesssim k^{ - 2 s ( 1 - \frac{ 1 }{ p } ) }\|q\varphi\|_{H_{ 1 / 2 + \epsilon }^{ - s / p } ( \mathbb{ R }^{ d } )}\lesssim k^{ - 2 s ( 1 - \frac{ 1 }{ p } ) }\|\varphi\|_{H_{ - 1 / 2 - \epsilon }^{ s / p }(\mathbb R^d)},
\end{align*}
which completes the proof. 
\end{proof}

Using this notation, the stochastic fractional Helmholtz equation \eqref{119} can be rewritten formally in the form of the Lippmann--Schwinger equation:
\begin{align}\label{26}
     ( I + \mathcal{K}_{ k } ) u = \mathcal{H}_{k} f.
\end{align}

\begin{theorem}\label{37}
Assume that $m \in ( d - 2 \alpha, d )$ and $q \in L^{ 2 } ( U )$. For any $t \in ( \frac{ d - m }{ 2 }, \alpha )$ and $\epsilon > 0$, if the wavenumber $k$ is sufficiently large so that $\|\mathcal{ K }_k \|_{ \mathcal{L} ( H_{ - 1 / 2 - \epsilon}^{ t } ( \mathbb{ R }^{ d } ) ) } < 1 $, then there exists a unique stochastic process $u : \mathbb{R}^{ d } \rightarrow \mathbb{C}$ satisfying \eqref{26} almost surely. Moreover, the solution satisfies the estimate
\begin{align}\label{28}
    \| u  \|_{ H_{ - 1 / 2 - \epsilon }^{ t } ( \mathbb{R}^{ d } ) } \lesssim \| f \|_{ H_{ 1 / 2 + \epsilon }^{ - t } ( \mathbb{R}^{ d } ) }.
\end{align}
\end{theorem}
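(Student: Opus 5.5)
The plan is to treat \eqref{26} as a fixed-point problem in $H^{t}_{-1/2-\epsilon}(\mathbb{R}^d)$ and apply a Neumann series, pathwise in $\omega$. Two ingredients are needed: that the right-hand side $\mathcal{H}_k f$ lies almost surely in $H^{t}_{-1/2-\epsilon}(\mathbb{R}^d)$, and that $I+\mathcal{K}_k$ is invertible on this space, which is exactly what the smallness hypothesis $\|\mathcal{K}_k\|<1$ gives.

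\textbf{Step 1: regularity of the source.} Since $t>\frac{d-m}{2}$, we can pick $\epsilon'>0$ with $t=\frac{d-m}{2}+\epsilon'$. Lemma~\ref{63} with $p=2$ then gives $f\in H^{-t}(D)$ almost surely. Because $f$ is supported in the bounded set $D$, the weight $\langle x\rangle^{1/2+\epsilon}$ is bounded on $\operatorname{supp} f$. Multiplying by a smooth cutoff equal to $1$ near $D$ therefore shows $f\in H^{-t}_{1/2+\epsilon}(\mathbb{R}^d)$ almost surely, with norm controlled by $\|f\|_{H^{-t}(D)}$. I expect a routine commutator/multiplier argument for the weight to suffice here.

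\textbf{Step 2: mapping property of $\mathcal{H}_k$.} We need $\mathcal{H}_k: H^{-t}_{1/2+\epsilon}\to H^{t}_{-1/2-\epsilon}$ boundedly. Theorem~\ref{6} gives this with $t=s/p$, $s\in[\frac{\alpha}{2},\alpha)$, $p\in(1,\infty)$. The range of $s/p$ is $(0,\alpha)$, so for the given $t\in(\frac{d-m}{2},\alpha)$ we choose $s\in[\max(t,\frac{\alpha}{2}),\alpha)$ and $p=s/t\ge 1$. If $p=1$ is forced, i.e.\ $t\ge\alpha/2$, we instead take $s$ slightly larger than $t$, still below $\alpha$, so that $p>1$. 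The condition $m>d-2\alpha$ guarantees the interval for $t$ is nonempty. We take $k$ large enough that $k>2^{1/(2\alpha-2s)}$ and that the smallness hypothesis holds. This yields $\|\mathcal{H}_k f\|_{H^t_{-1/2-\epsilon}}\lesssim k^{-2s(1-1/p)}\|f\|_{H^{-t}_{1/2+\epsilon}}\lesssim\|f\|_{H^{-t}_{1/2+\epsilon}}$ almost surely. Theorem~\ref{112}, with the same $s,p$, confirms that $\mathcal{K}_k$ is bounded on $H^t_{-1/2-\epsilon}$ with norm $O(k^{-2s(1-1/p)})$, so the assumption $\|\mathcal{K}_k\|<1$ is attainable for large $k$.

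\textbf{Step 3: Neumann series and measurability.} Since $\|\mathcal{K}_k\|_{\mathcal{L}(H^t_{-1/2-\epsilon})}<1$, the operator $I+\mathcal{K}_k$ is invertible with $\|(I+\mathcal{K}_k)^{-1}\|\le(1-\|\mathcal{K}_k\|)^{-1}$. Setting $u=(I+\mathcal{K}_k)^{-1}\mathcal{H}_k f$ gives a solution of \eqref{26} for almost every $\omega$. Uniqueness in $H^t_{-1/2-\epsilon}$ follows from injectivity, and \eqref{28} follows by combining the two norm bounds. To call $u$ a stochastic process, note that $\omega\mapsto f(\omega)\in H^{-t}_{1/2+\epsilon}$ is measurable, and $u$ is its image under the bounded, hence continuous, linear map $(I+\mathcal{K}_k)^{-1}\mathcal{H}_k$, so measurability is inherited.

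The main obstacle is Step 1. Lemma~\ref{63} is stated on $D$ and with strict loss $\epsilon$, so we must carefully convert it into membership in the weighted global space $H^{-t}_{1/2+\epsilon}(\mathbb{R}^d)$. Applying it with $p=2$ and using the compact support is what I would try first. A secondary care point is matching the parameters $(s,p)$ in Theorem~\ref{6} to an arbitrary $t<\alpha$.
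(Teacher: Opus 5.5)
Your proposal is correct and follows essentially the same route as the paper: it uses Lemma~\ref{63} together with Theorem~\ref{6} to place $f$ in $H^{-t}_{1/2+\epsilon}(\mathbb{R}^d)$ and $\mathcal{H}_k f$ in $H^{t}_{-1/2-\epsilon}(\mathbb{R}^d)$ almost surely, then inverts $I+\mathcal{K}_k$ by the Neumann series (the paper cites Kreyszig), and reads off the estimate \eqref{28} and uniqueness directly. Your explicit choice of $s\in[\alpha/2,\alpha)$ and $p>1$ with $s/p=t$, the weighted-space remark for the compactly supported $f$, and the measurability argument are details the paper leaves implicit.
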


\begin{proof}
For any $t \in ( \frac{ d - m }{ 2 }, \alpha )$, Lemma \ref{63} together with Theorem \ref{6} implies that
\begin{align*}
    f \in H_{ 1 / 2 + \epsilon }^{ - t }( \mathbb{R}^{ d } ), \quad \mathcal{H}_{ k } f \in H_{ - 1 / 2 - \epsilon }^{ t }( \mathbb{R}^{ d } ), \quad \text{a.s.}.
\end{align*}
Since $\| \mathcal{K}_{ k } \|_{ \mathcal{L} ( H_{ - 1 / 2 - \epsilon }^{ t } ( \mathbb{ R }^{ d } ) ) } < 1 $, the operator $ I + \mathcal{K}_k $ is invertible on $ H_{ - 1 /2  - \epsilon }^{ t } ( \mathbb{R}^{ d } )$ by \cite[Theorem 7.3-1]{Kreyszig-91}.

Define $u : = ( I + \mathcal{K}_k)^{-1} ( \mathcal{H}_{k} f )$. Then 
$u$ satisfies \eqref{26} almost surely, establishing existence. Estimate \eqref{28} follows directly from \cite[Theorem 7.3-1]{Kreyszig-91} and Theorem \ref{6}. Uniqueness is an immediate consequence of \eqref{28}.
\end{proof}

The following result establishes the well-posedness of the direct problem \eqref{119}--\eqref{102} in the distributional sense.

\begin{theorem}\label{104}
Assume that $m \in ( d - 2 \alpha, d )$ and $q \in L^{ 2 } ( U ) $. For any $t \in ( \frac{ d - m }{ 2 }, \alpha )$ and $\epsilon > 0$, if the wavenumber $k$ is sufficiently large such that $\|\mathcal{K}_k \|_{ \mathcal{L} ( H_{ - 1 / 2 - \epsilon}^{ t } ( \mathbb{ R }^{ d } ) ) } < 1 $, then the scattering problem \eqref{119}--\eqref{102} admits a unique distributional solution $u \in H_{ - 1 / 2 - \epsilon }^{ t } ( \mathbb{R}^{ d } )$ almost surely.
\end{theorem}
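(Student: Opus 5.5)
The plan is to show that, for sufficiently large $k$, solving \eqref{119}--\eqref{102} in the distributional sense is equivalent to solving the Lippmann--Schwinger equation \eqref{26}. Theorem~\ref{37} then gives existence and uniqueness in $H^{t}_{-1/2-\epsilon}(\mathbb{R}^d)$ almost surely. The argument has two directions, carried out pathwise for $\omega$ in the full-measure set on which $f(\omega)\in H^{-t}_{1/2+\epsilon}(\mathbb{R}^d)$ (Lemma~\ref{63}).

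\emph{Existence.} Let $u$ be the solution of \eqref{26} from Theorem~\ref{37}. I would verify that $((-\Delta)^\alpha - k^{2\alpha})\mathcal{H}_k\varphi = \varphi$ in $\mathscr{D}'(\mathbb{R}^d)$ for every $\varphi \in H^{-t}_{1/2+\epsilon}(\mathbb{R}^d)$.
\begin{itemize}
\item For smooth compactly supported $\varphi$ this follows from the limiting absorption construction in the proof of Theorem~\ref{6}. There $\mathcal{H}_{k,\tau}\varphi$ has Fourier transform $\hat\varphi/(|\xi|^{2\alpha}-k^{2\alpha}-\mathrm{i}\tau)$, so $((-\Delta)^\alpha-k^{2\alpha}-\mathrm{i}\tau)\mathcal{H}_{k,\tau}\varphi=\varphi$ exactly.
\item Pair with a test function $\psi\in\mathscr{D}(\mathbb{R}^d)$, move the operator onto $\psi$ (it is symmetric on the Fourier side), and pass to the limit $\tau\to0^+$ using \eqref{24}. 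This requires $((-\Delta)^\alpha - k^{2\alpha})\psi$ to lie in the dual space $H^{-t}_{1/2+\epsilon}$. It does: $(-\Delta)^\alpha\psi$ is smooth and decays like $|x|^{-d-2\alpha}$, hence lies in $H^{-t}_{1/2+\epsilon}$ since $2\alpha>1/2+\epsilon-\cdots$. I would check this decay/weight bookkeeping briefly; for small $\epsilon$, $\langle x\rangle^{1/2+\epsilon}|x|^{-d-2\alpha}\in L^2$ holds.
\item Extend to general $\varphi$ by density and the continuity in Theorem~\ref{6}.
\end{itemize}
Applying this identity to $\varphi=f-qu$, where $qu\in H^{-t}_{1/2+\epsilon}$ by \eqref{106}, gives $((-\Delta)^\alpha-k^{2\alpha})u = f - qu$ in $\mathscr{D}'$. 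The radiation condition \eqref{102} is inherited through the kernel representation $u=\mathcal{H}_k(f-qu)$, using the outgoing behaviour of $G^k$ from its asymptotics (Appendix). Since $f-qu$ is in general only a distribution, I would interpret this in the sense built into the definition of distributional solution, namely as an outgoing solution given by the resolvent $\mathcal{H}_k$.

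\emph{Uniqueness.} This is the main obstacle. Suppose $w\in H^{t}_{-1/2-\epsilon}$ solves the homogeneous problem $((-\Delta)^\alpha-k^{2\alpha})w+qw=0$ together with the radiation condition. I need to show $w=\mathcal{H}_k(-qw)$, i.e.\ $w+\mathcal{K}_kw=0$; then $\|\mathcal{K}_k\|<1$ forces $w=0$.
\begin{itemize}
\item Set $v = w+\mathcal{K}_k w$. Then $((-\Delta)^\alpha-k^{2\alpha})v=0$ in $\mathscr{D}'$ and $v$ is outgoing.
\item On the Fourier side this means $(|\xi|^{2\alpha}-k^{2\alpha})\hat v=0$, so $\hat v$ is supported on the sphere $|\xi|=k$.
\item A nonzero distribution supported on the sphere produces a field behaving like a combination of incoming and outgoing waves, or like a Herglotz-type wave that is not $O(|x|^{-(d-1)/2})$-radiating. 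The Sommerfeld condition \eqref{102}, combined with a Rellich-type lemma, forces $v=0$. Concretely, I would argue that the radiation condition makes the far field of $v$ vanish, and a Herglotz wave with vanishing far field has zero density.
\end{itemize}
I expect this step to require the most care, because the weights $\langle x\rangle^{-1/2-\epsilon}$ are precisely borderline for Agmon--Hörmander type uniqueness.
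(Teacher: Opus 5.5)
Your proposal is correct. Its existence half is the paper's argument made rigorous. The paper pushes $(-\Delta)^\alpha-k^{2\alpha}$ through $\int G^k(x-y)(\cdot)(y)\,dy$ and uses $((-\Delta)^\alpha-k^{2\alpha})G^k=\delta$. You instead obtain $((-\Delta)^\alpha-k^{2\alpha})\mathcal{H}_k\varphi=\varphi$ in $\mathscr{D}'(\mathbb{R}^d)$ from the limiting-absorption limit \eqref{24} and duality. You also verify \eqref{102}, which the paper's proof does not address.

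The uniqueness half takes a genuinely different route. The paper pairs the homogeneous equation with $G^k(x,\cdot)$ and moves $(-\Delta)^\alpha$ across by self-adjointness, arriving directly at $v^*+\mathcal{K}_kv^*=0$. That formal Green identity never uses the radiation condition, and it cannot be valid without it. For $q=0$ (so $\mathcal{K}_k=0$), every Herglotz wave $\int_{\mathbb{S}^{d-1}}e^{\mathrm{i}kx\cdot\theta}g(\theta)\,dS(\theta)$ lies in $H^t_{-1/2-\epsilon}(\mathbb{R}^d)$ and solves the homogeneous equation. Your decomposition $v=w+\mathcal{K}_kw$ isolates exactly where \eqref{102} enters. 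The function $v$ is a radiating solution of the free homogeneous equation, so Lemma~\ref{homo FHE} of the appendix (pass to $(|\xi|^2-k^2)\hat v=0$, then apply Rellich) gives $v=0$, and $\|\mathcal{K}_k\|<1$ then gives $w=0$.

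What this costs you is two checks. First, that $\mathcal{H}_k(qw)$ is outgoing, which needs the asymptotics of $G^k$ and its derivatives uniformly for $y\in U$. Second, that the limiting-absorption $\mathcal{H}_k$ coincides with convolution against $G^k$, which the paper assumes anyway.

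Your concern about borderline Agmon--H\"ormander weights is unnecessary. $\hat v$ is supported on the compact sphere $|\xi|=k$, so $v$ is real analytic, and the pointwise Sommerfeld condition plus the classical Rellich lemma suffice. One detail is worth adding: $|\xi|^{2\alpha}-k^{2\alpha}$ vanishes to first order on that sphere. This rules out transversal derivatives of the surface measure in $\hat v$ and so justifies $(|\xi|^2-k^2)\hat v=0$, a step the paper's appendix lemma asserts without comment.
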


\begin{proof}
We first show that any solution of the Lippmann--Schwinger equation \eqref{26} is also a solution of \eqref{119} in the distributional sense. Let $u^{*}$ be the solution of \eqref{26}, so that
\begin{align}\label{121}
    u^{*} ( x ) + \int_{ \mathbb{R}^{ d } } G^{ k } ( x - y ) q ( y ) u^{*} ( y ) d y = \int_{ \mathbb{R}^{ d } } G^{ k } ( x - y ) f ( y ) d y, \quad x \in \mathbb{R}^{ d }.
\end{align}
Since $G^{ k }$ is the fundamental solution of the operator $( - \Delta )^{ \alpha } - k^{ 2 \alpha }$, we have
\begin{align*}
    ( - \Delta )^{ \alpha } G^{ k } ( x ) - k^{ 2 \alpha } G^{ k } ( x ) = \delta ( x  ).
\end{align*}
Hence, for any $\psi \in \mathscr{D}(\mathbb R^d)$, 
\begin{align}\label{148}
    \big\langle \big( ( - \Delta )^{ \alpha } - k^{ 2 \alpha } \big) G^{ k } ( x - y ), \psi ( x ) \big\rangle  = \langle \delta ( x - y ), \psi ( x ) \rangle = \psi ( y ).
\end{align}
Using (\ref{121}) and \eqref{148}, we get for any test function $\psi \in \mathscr{D}(\mathbb R^d)$, 
\begin{align*}
    & \big \langle ( - \Delta )^{ \alpha } u^{*} ( x ) - k^{ 2 \alpha } u^{*} ( x )  + q ( x ) u^{*} ( x ), \psi ( x ) \big \rangle 
    \\ & = \Big \langle \big( ( - \Delta )^{ \alpha } - k^{ 2 \alpha } \big) \big( - \int_{ \mathbb{R}^{ d } } G^{ k } ( x - y ) q ( y ) u^{*} ( y ) d y + \int_{ \mathbb{R}^{ d } } G^{ k } ( x - y ) f ( y ) d y \big ), \psi ( x ) \Big \rangle
    \\ & \quad + \langle q ( x ) u^{*} ( x ), \psi ( x ) \rangle 
    \\ & = - \int_{ \mathbb{R}^{ d } } \Big \langle \big( ( - \Delta )^{ \alpha } - k^{ 2 \alpha } \big) G^{ k } ( x - y ), \psi ( x ) \Big \rangle q ( y ) u^{*} ( y ) d y 
    \\ &\quad  + \int_{ \mathbb{R}^{ d } } \Big \langle \big( ( - \Delta )^{ \alpha } - k^{ 2 \alpha } \big) G^{ k } ( x - y ), \psi ( x ) \Big \rangle f ( y )   d y + \langle q ( x ) u^{ * } ( x ), \psi ( x ) \rangle 
    \\ & = - \langle q  u^{*} , \psi \rangle + \langle f , \psi \rangle + \langle q  u^{*} , \psi \rangle= \langle f , \psi \rangle,
\end{align*}
which shows that $u^{ * }$ satisfies \eqref{119} in the distributional sense. This establishes the existence of a distributional solution to \eqref{119}.

Next, we show the uniqueness of solutions to \eqref{119}. Let $v^* \in H_{ - 1 / 2 - \epsilon }^{ t } ( \mathbb{R}^{ d } )$ be any distributional solution of \eqref{119} with $f = 0$. It suffices to show that $v^*$ also satisfies the Lippmann--Schwinger equation \eqref{26} with $f = 0$, which then implies $v^* \equiv 0$. 

Since $v^*$ is a distributional solution, it satisfies
\begin{align*}
    ( - \Delta )^{ \alpha } v^* ( x ) - k^{ 2 \alpha } v^* ( x  ) + q ( x ) v^* ( x ) = 0.
\end{align*}
Here $q \in L^2 ( \mathbb{R}^{ d } )$, $v^{ * } \in H_{ - 1 / 2 - \epsilon }^{ t } ( \mathbb{R}^{ d } )$, and thus by (\ref{106}), $qv^{ * } \in H_{ 1 / 2 + \epsilon }^{ - t } ( \mathbb{R}^{ d } )$. By Theorem \ref{6}, the Green operator satisfies $G^{ k } ( x, \cdot ) : H_{ 1 / 2 + \epsilon }^{ - t } ( \mathbb{R}^{ d } ) \rightarrow H_{ - 1 / 2 - \epsilon }^{ t } ( \mathbb{R}^{ d } )$ for $t \in ( 0, \alpha )$. Hence, 
\begin{align*}
    \langle G^{ k } ( x, \cdot ), [ ( - \Delta )^{ \alpha } - k^{ 2 \alpha } ] v^* ( \cdot ) \rangle = - \langle G^{ k } ( x, \cdot ), q ( \cdot ) v^* ( \cdot ) \rangle \in H_{ - 1 / 2 - \epsilon }^{ t } ( \mathbb{R}^{ d } ).
\end{align*}
Since the fractional Laplacian is self-adjoint,
\begin{align*}
    \langle G^{ k } ( x, \cdot ), ( - \Delta )^{ \alpha } v^* ( \cdot ) \rangle = \langle ( - \Delta )^{ \alpha } G^{ k } ( x, \cdot ), v^* ( \cdot ) \rangle.
\end{align*}
Therefore,
\begin{align*}
    - \langle G^{ k } ( x, \cdot ), q ( \cdot ) v^* ( \cdot ) \rangle  = \langle [ ( - \Delta )^{ \alpha } - k^{ 2 \alpha } ] G^{ k } ( x, \cdot ), v^* ( \cdot ) \rangle= \langle \delta ( x - \cdot ), v^* ( \cdot ) \rangle= v^* ( x ).
\end{align*}
Equivalently,
\begin{align*}
    v^{ * } ( x ) + \int_{ \mathbb{ R }^{ d } } G^{ k } ( x, y ) q ( y ) v^{ * } ( y ) d y = 0,
\end{align*}
showing that $v^*$ satisfies the Lippmann--Schwinger equation \eqref{26} with $f = 0$. By Theorem \ref{37}, this implies $v^* \equiv 0$, which completes the proof. 
\end{proof}

\section{The inverse problem}\label{sec4}

This section is devoted to establishing the uniqueness of the inverse problem. First, we introduce the Born series derived from the Lippmann--Schwinger integral equation \eqref{26}. Under this formulation, the Born series is shown to converge to the exact solution. The uniqueness result is then obtained by deriving estimates for the Born expansion and analyzing the far-field patterns of the resulting wave field.

\subsection{The Born series}

When the wavenumber $k$ is sufficiently large so that $ \| \mathcal{ K }_{ k } \|_{ \mathcal{ L } ( H_{ - 1 / 2 - \epsilon }^{ t } ( \mathbb{ R }^{ d } ) ) } < 1 $, it is shown in Theorem \ref{104} that the scattering problem \eqref{119}--\eqref{102} has a unique distributional solution $u$, and moreover this solution satisfies the Lippmann--Schwinger equation \eqref{26}.

Define $u_{ 0 } ( x, k ) = ( \mathcal{ H }_{ k } f ) ( x )$, and introduce the Born sequence associated with \eqref{26} by
\begin{align}\label{105}
    u_{ j } ( x, k ) = ( - \mathcal{ K }_{ k } u_{ j - 1 } ( \cdot, k ) ) ( x ), \quad j \geq 1.
\end{align}
We claim that the Born series \eqref{105} converges in $H_{ - 1 / 2 - \epsilon}^{ t } ( \mathbb{R}^{ d } )$ to the unique solution of \eqref{26} for sufficiently large wavenumber $k$, i.e.,
\begin{align}\label{111}
    u ( x, k ) = \sum_{ j \geq 0 } u_{ j } ( x, k ) = \sum_{ j \geq 0 } \big( ( - \mathcal{ K }_{ k } )^{ j } \mathcal{ H }_{ k } f \big) ( x ).
\end{align}

By Theorem \ref{6} and Theorem \ref{112}, the convergence of the Born series follows from the estimate
\begin{align*}
    \Big\| \sum_{ j = N_{ 1 } }^{ N_{ 2 } } u_{ j } ( \cdot, k ) \Big\|_{ H_{ - 1 / 2 - \epsilon}^{ t } ( \mathbb{R}^{ d } ) } & \leq \sum_{ j = N_{ 1 } }^{ N_{ 2 } } \Big\| ( - \mathcal{ K }_{ k } )^{ j } \mathcal{ H }_{ k } f \Big\|_{ H_{ - 1 / 2 - \epsilon}^{ t } ( \mathbb{R}^{ d } ) }
    \\ & \lesssim \sum_{ j = N_{ 1 } }^{ N_{ 2 } } k^{ - 2 s ( 1 - \frac{ 1 }{ p } ) ( j + 1 ) } \| f \|_{ H_{ 1 / 2 + \epsilon}^{ - t } ( \mathbb{R}^{ d } ) } \rightarrow 0,
\end{align*}
as $N_{ 1 }, N_{ 2 } \rightarrow \infty$, provided that $s \in [ \frac{ \alpha }{ 2 }, \alpha ), \alpha > \frac{ s }{ p } = t > \frac{ d - m }{ 2 }$. Let $u^{*} ( x, k ) : = \sum_{ j \geq 0 } u_{ j } ( x, k )$ denote the limit of the Born series. Then
\begin{align*}
    \mathcal{ K }_{ k } u^{*} = \sum_{ j \geq 0 } \mathcal{ K }_{ k } u_{ j } ( x, k ) = - u^{*} + \mathcal{ H }_{ k } f,
\end{align*}
which shows that $u^*$ satisfies the Lippmann--Schwinger equation \eqref{26}, thereby proving the claim.

Next, we analyze the far-field behavior of the wave field $u$. As $| x | \rightarrow \infty$, the Green's function $G^{ k }$ admits the asymptotic expansions (cf. \eqref{Gka}):
\begin{align*}
    G^{ k } ( x ) = 
    \begin{cases}
        \frac{ \mathrm{i} }{ 2 \alpha } k^{ 1 - 2\alpha } e^{ \mathrm{i} k | x | } + O ( | x |^{ - 1 - 2 \alpha } ), & d = 1,\\[3pt] 
        \frac{ 1 + \mathrm{i} }{ 4 \alpha \sqrt{ \pi } } k^{ \frac{ 3 }{ 2 } - 2 \alpha } | x |^{ - \frac{ 1 }{ 2 } } e^{ \mathrm{ i } k | x | } + O ( | x |^{ - \frac{ 3 }{ 2 } } ), & d = 2,\\[3pt]  
        \frac{ 1 }{ 4 \pi \alpha } k^{ 2 - 2\alpha } | x |^{ - 1 } e^{ \mathrm{ i } k | x | } + O ( | x |^{ - 3 - 2 \alpha } ), & d = 3.
    \end{cases}
\end{align*}
Using the standard expansion
\begin{equation*}
    | x - y | = |x| - \hat{x} \cdot y + O \Big( \frac{1}{|x|} \Big), \quad \ |x| \rightarrow \infty,
\end{equation*}
we have
\begin{align}\label{125}
    G^{k} ( x - y ) = C_{ k, d, \alpha } \frac{ e^{ \mathrm{i} k |x| }}{ | x |^{ \frac{ d - 1 }{ 2 } } } e^{ - \mathrm{i} k \hat{x} \cdot y } + O ( | x |^{ - N_{ d } } ), \quad  \ |x| \rightarrow \infty,
\end{align}
where the decay exponents $N_d$ are defined in \eqref{Nd} and the constants $C_{ k, d, \alpha }$ are given by
\begin{align}\label{Ckda}
    C_{ k, d, \alpha } =
    \begin{cases}
        \frac{ \mathrm{i} k^{ 1 - 2 \alpha } }{ 2 \alpha }, & d = 1,\\[3pt]
        \frac{ ( 1 + \mathrm{i} ) k^{ 3 / 2 - 2 \alpha } }{ 4 \alpha \sqrt{ \pi } }, & d = 2,\\[3pt]
        \frac{ k^{ 2 - 2 \alpha } }{ 4 \pi \alpha }, & d = 3.
    \end{cases}
\end{align}

Substituting the asymptotic expansion \eqref{125} into \eqref{111} yields 
\begin{align}\label{aeu}
    u ( x, k ) & = C_{ k, d, \alpha } \frac{ e^{ \mathrm{i} k |x| }}{ | x |^{ \frac{ d - 1 }{ 2 } } } \bigg( \int_{ D } e^{ - \mathrm{i} k \hat{x} \cdot y } f ( y ) dy -  \int_{ U } e^{ - \mathrm{i} k \hat{x} \cdot y } q ( y ) ( \mathcal{H}_{k} f ) ( y ) dy \notag\\ 
    &\quad - \int_{ U } e^{ - \mathrm{i} k \hat{x} \cdot y } q ( y ) \sum_{ j \geq 2 } ( ( - \mathcal{K}_{k} )^{ j - 1 }  \mathcal{H}_{k} f ) ( y ) dy \bigg) + O ( | x |^{ - N_{ d } } ).
\end{align}
Recall the far-field expansion \eqref{ffp}:
\begin{align*}
    u ( x, k ) = \frac{ e^{ \mathrm{i} k |x| }}{ | x |^{ \frac{ d - 1 }{ 2 } } } u^{ \infty } ( \hat{x}, k ) + O ( | x |^{ - N_{ d } } ).
\end{align*}
By comparing \eqref{ffp} with \eqref{aeu}, the far-field pattern is defined as
\begin{align*}
    u^{ \infty } (\hat{x}, k ) : = \sum_{ j = 0, 1, 2 } F_{ j } ( \hat{x}, k ),
\end{align*}
where 
\begin{align*}
    F_{ 0 } ( \hat{x}, k ) & = C_{ k, d, \alpha } \int_{ D } e^{ - \mathrm{i} k \hat{x} \cdot y } f ( y ) dy,
    \\ F_{ 1 } ( \hat{x}, k ) & = - C_{ k, d, \alpha } \int_{ U } e^{ - \mathrm{i} k \hat{x} \cdot y } q ( y ) ( \mathcal{H}_{k} f ) ( y ) dy,
    \\ F_{ 2 } ( \hat{x}, k ) & = - C_{ k, d, \alpha } \int_{ U } e^{ - \mathrm{i} k \hat{x} \cdot y } q ( y ) \sum_{ j \geq 2 } ( ( - \mathcal{K}_{k} )^{ j - 1 }  \mathcal{H}_{k} f ) ( y ) dy.  
\end{align*}

In the subsequent analysis, we estimate the contributions $F_j$, $j=0, 1, 2$. Combining these estimates will lead to the uniqueness result for the inverse scattering problem.

\subsection{Estimates of $F_{0}$}

The following result shows that the functions $\mu^c$ and $\mu^r$ can be uniquely recovered using only the information contained in $F_0$.

\begin{theorem}\label{110}
Let $f$ satisfy Assumption \ref{117}. Then, for any $\tau \geq 0$, the following identities hold almost surely:
\begin{align}
    \hat{\mu^{c}}  ( \tau \hat{x} ) & = \lim \limits_{ K \rightarrow \infty } \frac{ C_{ d, \alpha }^{ c } }{ K } \int_{ K }^{ 2K } k^{ m + 4 \alpha - d - 1 } \overline{ F_{ 0 } ( \hat{x}, k ) } F_{ 0 } ( \hat{x}, k + \tau ) dk, \label{103}\\ 
    \hat{ \mu^{ r } } ( \tau \hat{x} ) & = \lim \limits_{ K \rightarrow \infty } \frac{ C_{ d, \alpha }^{ r } }{ K } \int_{ K }^{ 2K }  k^{ m + 4 \alpha - d - 1 } F_{ 0 } ( - \hat{x}, k ) F_{ 0 } ( \hat{x}, k + \tau ) dk,\label{135}
\end{align}
where
\begin{align*}
    C_{ d, \alpha }^{ c } = 
    \begin{cases}
        4 \alpha^{ 2 }, & d = 1, 
        \\ 8 \pi \alpha^{ 2 }, & d = 2, 
        \\ 16 \pi^{ 2 } \alpha^{ 2 }, & d = 3,
    \end{cases}
    \quad C_{ d, \alpha }^{ r } = 
    \begin{cases}
        - 4 \alpha^{ 2 }, & d = 1, 
        \\ - 8 \pi \alpha^{ 2 } \mathrm{i}, & d = 2, 
        \\ 16 \pi^{ 2 } \alpha^{ 2 }, & d = 3.
    \end{cases}
\end{align*}
\end{theorem}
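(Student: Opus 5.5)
The plan is to follow the standard ergodicity argument of Li--Wang and Caro--Helin--Lassas. Write $F_0(\hat x,k)=C_{k,d,\alpha}\hat f(k\hat x)$, where $\hat f(\xi)=\langle f,e^{-\mathrm{i}\xi\cdot}\rangle$ is well defined because $f$ is compactly supported in $D$. The proof then splits into two parts. First we compute the expectation of the integrand. Then we show that the randomness averages out as $K\to\infty$. In each case the constants $C^c_{d,\alpha},C^r_{d,\alpha}$ come from the prefactors $C_{k,d,\alpha}$ in \eqref{Ckda}:
\begin{align*}
\overline{C_{k,d,\alpha}}\,C_{k+\tau,d,\alpha}\sim |C_{1,d,\alpha}|^2k^{2(d+1)/2-4\alpha}=|C_{1,d,\alpha}|^2k^{d+1-4\alpha}.
\end{align*}
For example, in $d=3$ this equals $k^{4-4\alpha}/(16\pi^2\alpha^2)$. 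For the relation identity we use $C_{k,d,\alpha}C_{k+\tau,d,\alpha}$ instead, and this accounts for the factors $-1$, $-\mathrm{i}$ and $+1$. The factor $k^{m+4\alpha-d-1}$ is designed so that, after multiplying by these prefactors, what remains is $k^{m}\,\overline{\hat f(k\hat x)}\hat f((k+\tau)\hat x)$.

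\textbf{Step 1 (expectation).} Using \eqref{47} and \eqref{46},
\begin{align*}
\mathbb E\big[\overline{\hat f(k\hat x)}\hat f((k+\tau)\hat x)\big]=\langle K^c_f, e^{\mathrm{i}k\hat x\cdot x}e^{-\mathrm{i}(k+\tau)\hat x\cdot y}\rangle .
\end{align*}
Insert the oscillatory-integral representation of the kernel and integrate out $y$ first. This gives $\int e^{-\mathrm{i}\tau\hat x\cdot x}\sigma^c_f(x,(k+\tau)\hat x)\,dx$, up to the usual $x\leftrightarrow y$ convention, which does not matter since the principal symbol is real and even in $\xi$. Substituting $\sigma^c_f=\mu^c(x)|\xi|^{-m}+O(|\xi|^{-m-1})$ yields
\begin{align*}
\mathbb E[\cdots]=\hat{\mu^c}(\tau\hat x)\,(k+\tau)^{-m}+O(k^{-m-1}).
\end{align*}
Here we use that $\mu^c$ is compactly supported, so that the remainder is uniform in $x$ after integration over $D$. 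Multiplying by $k^m$ and averaging over $[K,2K]$ gives $\hat{\mu^c}(\tau\hat x)+O(K^{-1})$.

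The relation term is handled the same way with $\mathfrak R_f$. The pairing is now $\hat f(-k\hat x)=F_0(-\hat x,k)/C_{k,d,\alpha}$ against $\hat f((k+\tau)\hat x)$, which gives test functions $e^{\mathrm{i}k\hat x\cdot x}e^{-\mathrm{i}(k+\tau)\hat x\cdot y}$ again. The principal symbol then produces $\hat{\mu^r}(\tau\hat x)$.

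\textbf{Step 2 (almost sure convergence).} Set
\begin{align*}
X_K=\frac1K\int_K^{2K}\big(Y_k-\mathbb EY_k\big)\,dk,\qquad Y_k=k^m\overline{\hat f(k\hat x)}\hat f((k+\tau)\hat x).
\end{align*}
Since $f$ is Gaussian and centered, Isserlis' formula expresses $\mathbb E|X_K|^2$ through products of second moments $\mathbb E[\overline{\hat f(\xi)}\hat f(\eta)]$ and $\mathbb E[\hat f(\xi)\hat f(\eta)]$, where $\xi,\eta\in\{\pm k_1\hat x,\pm(k_j+\tau)\hat x\}$. The key estimate is a decorrelation bound of the form
\begin{align*}
|\mathbb E[\overline{\hat f(k_1\hat x)}\hat f(k_2\hat x)]|\le C_N (1+k_1+k_2)^{-m}(1+|k_1-k_2|)^{-N}
\end{align*}
for all $N$, and similarly for the relation moments. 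It follows from the same computation as in Step 1: the result is the Fourier transform in $x$ of a smooth compactly supported symbol, evaluated at frequency $(k_1-k_2)\hat x$, and integrating by parts in $x$ gives the decay, using the uniform $x$-derivative bounds of $S^{-m}$. Moments paired with $\hat f(-k\hat x)$ at the other point decay like $(k_1+k_2)^{-N}$ and are therefore negligible.

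With this bound, $\mathbb E|X_K|^2\lesssim K^{-2}\int_K^{2K}\!\int_K^{2K}(1+|k_1-k_2|)^{-N}\,dk_1dk_2\lesssim K^{-1}$. This gives almost sure convergence along the sequence $K=2^j$ by Borel--Cantelli, since $\sum 2^{-j}<\infty$. Passing from the dyadic sequence to all $K$ needs either a sharper bound or the standard interpolation argument: control $\sup_{K\in[2^j,2^{j+1}]}|X_K-X_{2^j}|$, or use a finer sequence $K_j=j^{2}$ together with continuity of $K\mapsto X_K$ and the $K^{-1}$ variance bound on increments. A countable dense set of $\tau$ and continuity in $\tau$ then make the null set independent of $\tau$.

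\textbf{Main obstacle.} I expect the main obstacle to be the decorrelation estimate and the justification of pairing the distribution $K^c_f$ with non-compactly supported exponentials. This is resolved by inserting a cutoff $\chi\in C_0^\infty$ equal to one near $\bar D$, which does not change anything because $\operatorname{supp}f\subset D$. A secondary point is the exact bookkeeping of the constants $C_{k,d,\alpha}$, and in particular the asymptotic $(k+\tau)^{\beta}/k^{\beta}\to1$, which must be absorbed into the $o(1)$ terms.
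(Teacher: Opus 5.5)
Your proposal is correct, and its first half matches the paper. The expectation computation via \eqref{47}--\eqref{46}, the cutoff $\chi$, and your decorrelation bound are exactly the paper's non-stationary-phase estimates \eqref{107}--\eqref{108}. This includes the observation that the same-ray relation moments decay like $(1+2k+\tau)^{-N}$.

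You genuinely differ in the almost-sure step. The paper writes $F_0=U_k+\mathrm{i}V_k$ and polarizes $\overline{F_0(\hat x,k)}F_0(\hat x,k+\tau)$ into a combination of squares $W_k^2$ of real Gaussians. It then cites \cite[Lemma 4.2]{Caro-Helin-Lassas-AA-19}, which reduces the covariance of the squares to $\mathbb{E}(W_kW_{k+\tilde\tau})$, and \cite[Theorem 4.1]{Caro-Helin-Lassas-AA-19}, an ergodic law of large numbers that needs only correlation decay $(1+\tilde\tau)^{-\epsilon}$ for some $\epsilon>0$. You instead apply Isserlis directly to the complex Gaussians, use the rapid decay to get $\mathbb{E}|X_K|^2\lesssim K^{-1}$, and prove the law of large numbers by hand. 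Your route is self-contained, needs no real/imaginary polarization, and even yields a rate. The price is that it relies on arbitrary-order decay, which is available here, and it moves the difficulty into the passage from a subsequence to all $K$.

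That passage is the one step you only sketch. The dyadic sequence alone does not suffice, because the blocks $[2^j,2^{j+1}]$ are too long for a crude bound. The $K_j=j^2$ version works, but not through a variance bound on increments; what makes it work is a pathwise bound.

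Set $Z_k=Y_k-\mathbb{E}Y_k$, $I_j=[j^2,(j+1)^2]$ and $I_j'=[2j^2,2(j+1)^2]$. For $K\in I_j$, $|X_K-X_{j^2}|$ is bounded by $j^{-2}\int_{I_j\cup I_j'}|Z_k|\,dk$ plus $3j^{-3}\int_{j^2}^{2(j+1)^2}|Z_k|\,dk$. Isserlis gives $\sup_k\mathbb{E}|Z_k|^2<\infty$, so both terms have second moment $O(j^{-2})$. This is summable, and Borel--Cantelli finishes the argument.

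You can also avoid treating the factor $(1+\tau/k)^{\beta}$ separately by keeping the exact weight inside $Y_k$; nothing in the variance computation changes. Finally, the statement is for fixed $\tau$ and $\hat x$, so making the null set independent of $\tau$ is not needed. It would also require an additional equicontinuity argument that you have not supplied.
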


\begin{proof}
From (\ref{47}) and (\ref{46}), we have
\begin{align*}
    & \mathbb{E} [ \overline{ F_{ 0 } ( \hat{x}, k ) } F_{ 0 } ( \hat{x}, k + \tau ) ] \\ 
    & = \overline{ C_{ k, d, \alpha } } C_{ k + \tau, d, \alpha } \mathbb{E} \Big[ \int_{ D } e^{  \mathrm{i} k \hat{x} \cdot y_{ 1 } } \overline{ f ( y_{ 1 } ) } dy_{ 1 } \int_{ D } e^{  - \mathrm{i} ( k + \tau ) \hat{x} \cdot y_{ 2 } } f ( y_{ 2 } ) dy_{ 2 } \Big]\\
    & = \overline{ C_{ k, d, \alpha } } C_{ k + \tau, d, \alpha } \int_{ D \times D } e^{ \mathrm{i} ( k + \tau ) \hat{x} \cdot ( y_{ 1 } - y_{ 2 } ) } e^{  - \mathrm{i} \tau \hat{x} \cdot y_{ 1 } } \mathbb{E} [ \overline{ f ( y_{ 1 } ) } f ( y_{ 2 } ) ] dy_{ 1 } dy_{ 2 }\\ 
    & = \overline{ C_{ k, d, \alpha } } C_{ k + \tau, d, \alpha } \int_{ D \times D } e^{ \mathrm{i} ( k + \tau ) \hat{x} \cdot ( y_{ 1 } - y_{ 2 } ) } e^{  - \mathrm{i} \tau \hat{x} \cdot y_{ 1 } } K^{ c }_{ f } ( y_{ 1 }, y_{ 2 } ) dy_{ 1 } dy_{ 2 }\\
    & = \overline{ C_{ k, d, \alpha } } C_{ k + \tau, d, \alpha } \int_{ D } e^{  - \mathrm{i} \tau \hat{x} \cdot y_{ 1 } } \Big( \int_{ D } e^{ - \mathrm{i} ( - k - \tau ) \hat{x} \cdot ( y_{ 1 } - y_{ 2 } ) }  K^{ c }_{ f } ( y_{ 1 }, y_{ 2 } ) dy_{ 2 } \Big) dy_{ 1 }\\
    & = \overline{ C_{ k, d, \alpha } } C_{ k + \tau, d, \alpha } \int_{ D } e^{  - \mathrm{i} \tau \hat{x} \cdot y_{ 1 } } \sigma^{ c }_{ f } ( y_{ 1 }, - ( k + \tau ) \hat{x} ) dy_{ 1 } \\ 
    & = \overline{ C_{ k, d, \alpha } } C_{ k + \tau, d, \alpha } \int_{ D } e^{  - \mathrm{i} \tau \hat{x} \cdot y_{ 1 } } \mu^{ c } ( y_{ 1 } ) | ( k + \tau ) \hat{x} |^{ - m } dy_{ 1 } + \overline{ C_{ k, d, \alpha } } C_{ k + \tau, d, \alpha } O ( k^{ - m - 1 } )\\ 
    & =  C^{(1)}_{ d, \alpha }  k^{ d - 4 \alpha - m + 1 } \hat{ \mu^{ c } } ( \tau \hat{x} ) + O ( k^{ d - 4 \alpha - m } ),
\end{align*}
where
\begin{align*}
    C^{ ( 1 ) }_{ d, \alpha }  = 
    \begin{cases}
        \frac{ 1 }{ 4 \alpha^{ 2 } }, & d = 1, \\[3pt] 
        \frac{ 1 }{ 8 \pi \alpha^{ 2 } }, & d = 2, \\[3pt]
        \frac{ 1 }{ 16 \pi^{ 2 } \alpha^{ 2 } }, & d = 3.
    \end{cases}
\end{align*}
Similarly,
\begin{align*}
   & \mathbb{E} [ F_{ 0 } ( - \hat{x}, k ) F_{ 0 } ( \hat{x}, k + \tau ) ] \\
   &=  C_{ k, d, \alpha } C_{ k + \tau, d, \alpha } \int_{ D } e^{  - \mathrm{i} \tau \hat{x} \cdot y_{ 1 } } \Big( \int_{ D } e^{ - \mathrm{i} ( - k - \tau ) \hat{x} \cdot ( y_{ 1 } - y_{ 2 } ) }  K^{ r }_{ f } ( y_{ 1 }, y_{ 2 } ) dy_{ 2 } \Big) dy_{ 1 }\\ 
   & =  C_{ k, d, \alpha } C_{ k + \tau, d, \alpha } \int_{ D } e^{  - \mathrm{i} \tau  \hat{x} \cdot y_{ 1 } } \sigma^{ r }_{ f } ( y_{ 1 }, - ( k + \tau ) \hat{x} ) dy_{ 1 } \\ 
   & = C^{(2)}_{ d, \alpha }  k^{ d - 4 \alpha - m + 1 } \hat{ \mu^{ r } } ( \tau \hat{x} ) + O ( k^{ d - 4 \alpha - m } ),
\end{align*}
where
\begin{align*}
  C^{(2)}_{ d, \alpha } = 
    \begin{cases}
        - \frac{ 1 }{ 4 \alpha^{ 2 } }, & d = 1, \\[3pt] 
        \frac{ \mathrm{i} }{ 8 \pi \alpha^{ 2 } }, & d = 2, \\[3pt] 
        \frac{ 1 }{ 16 \pi^{ 2 } \alpha^{ 2 } }, & d = 3.
    \end{cases}
\end{align*}

To prove \eqref{103}, observe that
\begin{align*}
    \mathbb{E} [ \overline{ F_{ 0 } ( \hat{x}, k ) } F_{ 0 } ( \hat{x}, k + \tau ) ] =  \overline{ C_{ k, d, \alpha } } C_{ k + \tau, d, \alpha } \int_{ \mathbb{R}^{ d } } e^{  - \mathrm{i} \tau \hat{x} \cdot y } \chi ( y ) \sigma^{ c }_{ f } ( y, - ( k + \tau ) \hat{x} ) dy, 
\end{align*}
where $\chi \in C_{ 0 }^{ \infty } ( \mathbb{R}^{ d } )$ satisfies $\chi ( y ) = 1$ for $y \in D$. By the non-stationary phase principle \cite{Stein-93}, for any integer $N \in \mathbb{N}$,
\begin{align}\label{107}
    \mathbb{E} [ \overline{ F_{ 0 } ( \hat{x}, k ) } F_{ 0 } ( \hat{x}, k + \tau ) ] \lesssim ( 1 + k )^{ d - 4 \alpha - m + 1 } ( 1 + \tau )^{ - N }.
\end{align}
Similarly,  
\begin{align*}
    \mathbb{E} [  F_{ 0 } ( \hat{x}, k ) F_{ 0 } ( \hat{x}, k + \tau ) ] = C_{ k, d, \alpha } C_{ k + \tau, d, \alpha } \int_{ \mathbb{R}^{ d } } e^{ - \mathrm{i} ( 2k + \tau ) \hat{x} \cdot y } \chi ( y ) \sigma^{ r }_{ f } ( y, - ( k + \tau ) \hat{x} ) dy.
\end{align*}
Applying again the non-stationary phase principle \cite{Stein-93} yields, for any $N \in \mathbb{N}$,
\begin{align}\label{108}
    \mathbb{E} [ F_{ 0 } ( \hat{x}, k ) F_{ 0 } ( \hat{x}, k + \tau ) ] &\lesssim ( 1 + k )^{ d - 4 \alpha - m + 1 } ( 1 + 2k + \tau )^{ - N }\notag\\
    &\lesssim ( 1 + k )^{ d - 4 \alpha - m + 1 } ( 1 + \tau )^{ - N }.
\end{align}

Define 
\begin{align*}
     F_{ 0 } ( \hat{x}, k ) = U_k + \mathrm{i} V_k,
\end{align*}
where $U_k$ and $V_k$ denote the real and imaginary parts, respectively. Then
\begin{align*}
    \overline{ F_{ 0 } ( \hat{x}, k ) } F_{ 0 } ( \hat{x}, k + \tau ) & = ( U_{ k } - \mathrm{i} V_{ k } ) ( U_{ k + \tau } + \mathrm{i} V_{ k + \tau } )
    \\ & = U_{ k } U_{ k + \tau } + V_{ k } V_{ k + \tau } + \mathrm{i} ( U_{ k } V_{ k + \tau } - V_{ k } U_{ k + \tau } )
    \\ & = \frac{ 1 }{ 4 } \Big[ ( U_{ k } + U_{ k + \tau } )^{ 2 } -  ( U_{ k } - U_{ k + \tau } )^{ 2 } +  ( V_{ k } + V_{ k + \tau } )^{ 2 } -  ( V_{ k } - V_{ k + \tau } )^{ 2 }
    \\ &\quad + \mathrm{i} \big( ( U_{ k } + V_{ k + \tau } )^{ 2 } -  ( U_{ k } - V_{ k + \tau } )^{ 2 } - ( V_{ k } + U_{ k + \tau } )^{ 2 } +  ( V_{ k } - U_{ k + \tau } )^{ 2 } \big) \Big].
\end{align*}

Let $W_k$ denote any of the following random variables:
\begin{align*}
    & U_{ k } + U_{ k + \tau }, \quad U_{ k } - U_{ k + \tau }, \quad  V_{ k } + V_{ k + \tau }, \quad V_{ k } - V_{ k + \tau }, 
    \\ & U_{ k } + V_{ k + \tau },\quad  U_{ k } - V_{ k + \tau }, \quad  V_{ k } + U_{ k + \tau },\quad   V_{ k } - U_{ k + \tau }.
\end{align*}
To establish \eqref{103}, it suffices to show that
\begin{align*}
    \lim_{ K \rightarrow \infty } \frac{ 1 }{ K } \int_K^{ 2 K } k^{ m + 4 \alpha - d - 1 } ( W_k^2 - \mathbb{E} W_k^2 ) d k = 0, \quad \text{a.s.}.
\end{align*}
By \cite[Theorem 4.1]{Caro-Helin-Lassas-AA-19}, it is enough to verify that for any $\tilde{ \tau } \geq 0$, there exist positive constants $c$ and $\epsilon$ such that
\begin{align*}
    | \mathbb{E} [ k^{ m + 4 \alpha - d - 1 }( W_k^2 - \mathbb{E} W_k^2 ) ( k + \tilde{ \tau } )^{ m + 4 \alpha - d - 1 }( W_{ k + \tilde{ \tau } }^2 - \mathbb{E} W_{ k + \tilde{ \tau } }^2 ) ] | \leq c ( 1 + \tilde{ \tau } )^{ - \epsilon }.
\end{align*}
By \cite[Lemma 4.2]{Caro-Helin-Lassas-AA-19}, this in turn reduces to proving
\begin{align*}
    k^{ \frac{ m + 4 \alpha - d - 1 }{ 2 } } ( k + \tilde{ \tau } )^{ \frac{ m + 4 \alpha - d - 1 }{ 2 } } | \mathbb{E} ( W_{ k } W_{ k + \tilde{ \tau } } ) | \leq c ( 1 + \tilde{ \tau } )^{ - \epsilon },
\end{align*}
which follows directly from the decay estimates in \eqref{107} and \eqref{108}.

The identity \eqref{135} follows from an analogous argument, which completes the proof.
\end{proof}

\subsection{Estimates of $F_{1}$}

In this section, we establish estimates for $F_1$ in both two and three dimensions. For the one-dimensional setting, it suffices to analyze $F_0$ and $F_{ 1 } + F_{ 2 }$, and no specific estimates for $F_1$ are needed.

Recall that 
\begin{align}\label{127}
    F_{ 1 } ( \hat{x}, k ) & = - C_{ k, d, \alpha } \int_{ U } e^{ - \mathrm{i} k \hat{x} \cdot y } q ( y ) ( \mathcal{H}_{k} f ) ( y ) dy\notag\\
    & = - C_{ k, d, \alpha } \int_{ U } e^{ - \mathrm{i} k \hat{x} \cdot y } q ( y ) \int_{ D } G^{ k } ( y - z ) f ( z ) dz dy.
\end{align}
To facilitate the forthcoming estimates, we replace $G^{ k }$ by the truncated approximation
\begin{align*}
    G^{ k }_{ N } ( x ) = 
    \begin{cases}
        \frac{ ( 1 + \mathrm{i} ) k^{ 3 / 2 - 2 \alpha } }{ 4 \alpha \sqrt{ \pi | x | } } e^{ \mathrm{ i } k | x | } + C ( 1, x ) k^{ \frac{ 1 }{ 2 } - 2 \alpha } | x |^{ - \frac{ 3 }{ 2 } }, & d = 2,\\[5pt]
        \frac{ k^{ 2 - 2 \alpha } }{ 4 \pi \alpha | x | } e^{ \mathrm{ i } k | x | }, & d = 3,
    \end{cases}
\end{align*}
where
\begin{align*}
    C ( 1, x ) = \frac{ 10 + \mathrm{ i } }{ 32 \alpha } \sqrt{ \frac{ 2 }{ \pi } } \sin ( k | x | - \frac{ \pi }{ 4 } ).
\end{align*}
Define the corresponding truncated far-field contribution
\begin{equation}\label{F1N}
    F_{ 1, N } ( \hat{x}, k ) : = - C_{ k, d, \alpha } \int_{ U } e^{ - \mathrm{i} k \hat{x} \cdot y } q ( y ) \int_{ D } G^{ k }_{ N } ( y - z ) f ( z ) dz dy.
\end{equation}

\begin{lemma}\label{109}
Let $U, D, \alpha$, and $m$ satisfy Assumption \ref{117}. For any $s \in [0,1]$, $p \in (1, \infty)$, and $x \in U$, the following estimates hold:
\begin{align}
    \| G^{ k } ( x, \cdot ) \|_{ W^{ s, p } ( D ) } & \lesssim k^{ \frac{ d }{ 2 } + \frac{ 1 }{ 2 } - 2 \alpha + s }, \label{129}
    \\ \| G^{ k }_{ N } ( x, \cdot ) \|_{ W^{ s, p } ( D ) } & \lesssim  k^{ \frac{ d }{ 2 } + \frac{ 1 }{ 2 } - 2 \alpha + s }, \label{130}
    \\ \| G^{k} ( x, \cdot ) - G^{ k }_{ N } ( x, \cdot ) \|_{ W^{s, p}(D) } & \lesssim 
    \begin{cases}
        k^{ - \frac{ 1 }{ 2 } - 2 \alpha + s}, & d = 2, 
        \\ k^{- 4 \alpha}, & d = 3.
    \end{cases} \label{131}
\end{align}
\end{lemma}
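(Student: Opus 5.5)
The plan is to exploit the separation $\operatorname{dist}(D,U)>0$. For $x\in U$ and $y\in D$ the distance $|x-y|$ stays in a compact interval $[\delta_0,R_0]$ with $\delta_0>0$, so $G^k(x-\cdot)$ is smooth on $D$. It then suffices to bound pointwise the function and its first derivatives, uniformly for $|x-y|\in[\delta_0,R_0]$, with explicit powers of $k$. These $L^\infty$ bounds transfer to $L^p(D)$ and $W^{1,p}(D)$ because $D$ is bounded, and the case $s\in(0,1)$ follows by complex interpolation between $W^{0,p}(D)$ and $W^{1,p}(D)$. The interpolation gives a bound $k^{a}$ for $s=0$ and $k^{a+1}$ for $s=1$, hence $k^{a+s}$ in general. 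So the whole lemma reduces to sup-norm estimates of $\partial^\beta_y G^k(x-y)$ with $|\beta|\le1$.

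For \eqref{130} this is direct from the explicit formula for $G^k_N$. In $d=3$ we have $|G^k_N|\lesssim k^{2-2\alpha}$, and differentiating $e^{\mathrm{i}k|x-y|}/|x-y|$ gains at most one factor of $k$. This gives $k^{2-2\alpha+s}=k^{\frac d2+\frac12-2\alpha+s}$. In $d=2$ the leading term has size $k^{3/2-2\alpha}$, which again equals $k^{\frac d2+\frac12-2\alpha}$. The correction term $C(1,x)k^{1/2-2\alpha}|x|^{-3/2}$ is smaller by a factor $k^{-1}$, and its derivative costs at most one power of $k$ through the factor $\sin(k|x|-\pi/4)$.

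For \eqref{131} I would use the large-argument asymptotics of the Green's function proved in the Appendix (the expansions \eqref{Gka}), applied with argument $k|x|\ge k\delta_0\to\infty$.
\begin{itemize}
\item In $d=3$ the oscillatory part $\frac{\mathrm{i}k^{2-2\alpha}}{4\pi\alpha}\frac{\sin k|x|}{|x|}$ combines with the leading oscillatory asymptotics of the $H^{2,1}_{2,4}$ term to produce exactly $G^k_N$. What remains is the algebraic part of the Fox $H$-function expansion, which behaves like $(k|x|)^{-\gamma}$ times the prefactor $k^{3-2\alpha}$. I expect the exponent to be such that the remainder is $O(k^{3-2\alpha}(k|x|)^{-3-2\alpha})\lesssim k^{-4\alpha}$ on the compact range; this matches the $O(|x|^{-3-2\alpha})$ far-field remainder.
\item In $d=2$ the remainder after the two retained terms is of relative order $(k|x|)^{-2}$. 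Multiplied by the prefactor $k^{3/2-2\alpha}$, this gives $k^{-1/2-2\alpha}$.
\end{itemize}
Derivatives are handled by differentiating the asymptotic series termwise, which is legitimate since the Fox $H$ and Bessel expansions may be differentiated. Each derivative costs at most a factor $k$, so interpolation yields the factor $k^{s}$ in $d=2$. In $d=3$ the non-oscillatory algebraic remainder loses no $k$ under differentiation, which explains why no $k^s$ appears there. Finally, \eqref{129} follows from \eqref{130} and \eqref{131} by the triangle inequality, since the error terms are of lower order.

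The main obstacle is justifying these asymptotics uniformly in $k$ with explicit remainder exponents. This requires the precise algebraic and exponential expansions of $H^{2,1}_{2,4}(z)$ as $z\to\infty$, including its derivatives, and a check that the oscillatory exponential parts cancel or combine with the Bessel and sine terms exactly into $G^k_N$. I would first extract these from the Mellin–Barnes representation derived in the Appendix: shift the contour past the poles to obtain the algebraic terms with a controlled remainder, and use the standard exponential expansion of Fox $H$-functions for the oscillatory part.
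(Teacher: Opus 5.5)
Your plan is the paper's proof. Both argue by pointwise bounds on $G^k$, $G^k_N$, their difference and their gradients, taken from the large-$k|x-y|$ expansion of the Green's function and made uniform by $|x-y|\in[\delta_0,R_0]$ (explicit in your write-up, implicit in the paper), followed by interpolation between $L^p(D)$ and $W^{1,p}(D)$. Getting \eqref{129} from \eqref{130}--\eqref{131} by the triangle inequality is a harmless variant.

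Your treatment of \eqref{129}, \eqref{130} and of \eqref{131} for $d=3$ is sound. The gap is the check you postpone in $d=2$. For $G^k_N$ as defined, the remainder after the two retained terms is \emph{not} of relative order $(k|x|)^{-2}$. The paper's proof asserts the same expansion, so neither argument proves \eqref{131} for $d=2$ as stated.

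A clean way to do that check avoids Fox $H$ asymptotics.
\begin{itemize}
\item Let $\hat R(\eta)=(|\eta|^{2\alpha}-1)^{-1}-\alpha^{-1}(|\eta|^{2}-1)^{-1}$. The simple poles at $|\eta|=1$ have equal residues, so $\hat R$ is smooth on $\mathbb{R}^d\setminus\{0\}$, is a symbol of order $-2\alpha$ at infinity, and has a $|\eta|^{2\alpha}$ singularity at the origin.
\item Hence $R$ is non-oscillatory, with $\partial^\beta R(y)=O(|y|^{-d-2\alpha-|\beta|})$ as $|y|\to\infty$.
\item By scaling, $G^k(x)=\frac{k^{2-2\alpha}}{\alpha}\Phi_k(x)+k^{d-2\alpha}R(kx)$, where $\Phi_k$ is the outgoing Helmholtz fundamental solution. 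This agrees with $G^{k,\delta}+\widetilde{C}_{k,d,\alpha}G^{k,0}$ in the Appendix.
\end{itemize}

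For $d=3$ this gives $G^k-G^k_N=k^{3-2\alpha}R(kx)$ exactly. So the difference and its gradient are $O(k^{-4\alpha})$, as you said.

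For $d=2$, $\Phi_k=\frac{\mathrm{i}}{4}H_0^{(1)}(k|x|)$ and $H_0^{(1)}(z)=\sqrt{2/(\pi z)}\,e^{\mathrm{i}(z-\pi/4)}\bigl(1-\frac{\mathrm{i}}{8z}+O(z^{-2})\bigr)$. The true second term is therefore $\frac{1}{32\alpha}\sqrt{2/\pi}\,e^{\mathrm{i}(k|x|-\pi/4)}k^{1/2-2\alpha}|x|^{-3/2}$.

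The paper's $C(1,x)$ differs from $\frac{1}{32\alpha}\sqrt{2/\pi}\,e^{\mathrm{i}(k|x|-\pi/4)}$ by $\frac{1}{32\alpha}\sqrt{2/\pi}\,\bigl(10\sin(k|x|-\frac{\pi}{4})-\cos(k|x|-\frac{\pi}{4})\bigr)$. Its imaginary part, which comes from the $J_0$ piece, is right; its real part, which comes from the $H^{2,1}_{2,4}$ piece, is not. With the stated definition, $|G^k-G^k_N|$ has size $k^{1/2-2\alpha}$, and interpolation only yields $k^{1/2-2\alpha+s}$.

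With the corrected coefficient your argument works:
\begin{itemize}
\item the third Hankel term contributes $O(k^{-1/2-2\alpha})$, with one extra factor of $k$ per derivative;
\item the algebraic part contributes $O(k^{-4\alpha})$ for both the function and its gradient.
\end{itemize}
The algebraic part is dominated by $k^{-1/2-2\alpha}$ only because $\alpha>\frac14$ in Assumption~\ref{117}. Your relative-order claim uses this without saying so, and you should state it.
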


\begin{proof}
Observe that, as $k \rightarrow \infty$, for $x \neq 0$ and $\alpha$ satisfying Assumption \ref{117}, the Green's function admits the asymptotic expansion
\begin{align*}
    G^{ k } ( x ) = 
    \begin{cases}
        \frac{ ( 1 + \mathrm{i} ) k^{ 3 / 2 - 2 \alpha } }{ 4 \alpha \sqrt{ \pi | x | } } e^{ \mathrm{ i } k | x | } + C ( 1, x ) k^{ \frac{ 1 }{ 2 } - 2 \alpha } | x |^{ - \frac{ 3 }{ 2 } } + O ( k^{ - \frac{ 1 }{ 2 } - 2 \alpha } ), & d = 2,\\[5pt] 
        \frac{ k^{ 2 - 2 \alpha } }{ 4 \pi \alpha | x | } e^{ \mathrm{ i } k | x | } + O ( k^{ - 4 \alpha } ), & d = 3. 
    \end{cases}
\end{align*}
Consequently,
\begin{align*}
    G^{ k } ( x ) \lesssim k^{ \frac{ d }{ 2 } + \frac{ 1 }{ 2 } - 2 \alpha }, \quad | \nabla_{ x } G^{ k } ( x ) | \lesssim k^{ \frac{ d }{ 2 } + \frac{ 3 }{ 2 } - 2 \alpha }.
\end{align*}
The estimates (\ref{129}) and (\ref{130}) then follow directly by interpolating between the spaces $L^p(D)$ and $W^{1, p}(D)$.

Comparing $G^k$ with $G^k_N$, we have for $k \rightarrow \infty$ and $x \neq 0$ that 
\begin{align*}
    | G^{k} ( x ) - G^{ k }_{ N } ( x ) | \lesssim 
    \begin{cases}
        k^{ - \frac{ 1 }{ 2 } - 2 \alpha }, & d = 2, 
        \\ k^{ - 4 \alpha }, & d = 3.
    \end{cases}
\end{align*}
A simple calculation yields 
\begin{align*}
    | \nabla_{ x } G^{ k } ( x ) - \nabla_{ x } G^{ k }_{ N } ( x ) | \lesssim 
    \begin{cases}
        k^{ \frac{ 1 }{ 2 } - 2 \alpha }, & d = 2, 
        \\ k^{ - 4 \alpha }, & d = 3.
    \end{cases}
\end{align*}
Interpolating between $L^p(D)$ and $W^{ 1, p } ( D )$ leads directly to the estimate (\ref{131}). 
\end{proof}

\begin{lemma}\label{118}
Let $f$ satisfy Assumption \ref{117}. Then, the following estimate holds almost surely:
\begin{align*}
    | F_{ 1 } ( \widehat{x}, k ) - F_{ 1, N } ( \widehat{x}, k ) | \lesssim k^{ M_{ d } }, 
\end{align*}
where
\begin{align*}
    M_{ d } = 
    \begin{cases}
        2 - 4 \alpha - \frac{ m }{ 2 } + \epsilon, & d = 2, 
        \\ 2 - 6 \alpha, & d = 3.
    \end{cases}
\end{align*}
\end{lemma}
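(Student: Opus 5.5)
By the definitions \eqref{127} and \eqref{F1N}, the difference is
\begin{align*}
F_1(\hat x,k)-F_{1,N}(\hat x,k) = -C_{k,d,\alpha}\int_U e^{-\mathrm{i}k\hat x\cdot y}\, q(y)\, \big\langle f, (G^k-G^k_N)(y-\cdot)\big\rangle_D \, dy .
\end{align*}
We estimate it by duality pointwise in $y\in U$, and then integrate against $q$.

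\textbf{Step 1: pairing bound.} Lemma \ref{63} gives, almost surely, $f\in W^{-\frac{d-m}{2}-\epsilon,p}(D)$ for every $p\in(1,\infty)$. Fix the dual exponent $p'$ and set $s:=\frac{d-m}{2}+\epsilon$. Since $m<d$ and $\epsilon$ is small, $s\in[0,1]$. Then, for each fixed $y\in U$,
\begin{align*}
\big|\langle f,(G^k-G^k_N)(y-\cdot)\rangle\big| \le \|f\|_{W^{-s,p}(D)}\,\|G^k(y,\cdot)-G^k_N(y,\cdot)\|_{W^{s,p'}(D)} .
\end{align*}
We bound the second factor by \eqref{131} of Lemma \ref{109}. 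This is legitimate because $\operatorname{dist}(D,U)>0$: for $y\in U$ and $z\in D$ we have $|y-z|\ge c>0$, so we stay away from the singularity of the kernels and the large-$k$ expansions hold uniformly.

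\textbf{Step 2: integrate in $y$.} The resulting bound is uniform in $y\in U$. Since $q\in L^2(U)\subset L^1(U)$ with $U$ bounded, we get
\begin{align*}
|F_1-F_{1,N}|\lesssim |C_{k,d,\alpha}|\,\|q\|_{L^1(U)}\,\|f\|_{W^{-s,p}(D)}\,\sup_{y\in U}\|G^k(y,\cdot)-G^k_N(y,\cdot)\|_{W^{s,p'}(D)} .
\end{align*}
Here $\|f\|_{W^{-s,p}(D)}$ is an almost surely finite random constant, absorbed into $\lesssim$.

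\textbf{Step 3: count powers of $k$.}
\begin{itemize}
\item For $d=2$: $|C_{k,2,\alpha}|\sim k^{3/2-2\alpha}$, and \eqref{131} gives $k^{-1/2-2\alpha+s}$ with $s=\frac{2-m}{2}+\epsilon$. The total exponent is $\frac32-2\alpha-\frac12-2\alpha+1-\frac m2+\epsilon = 2-4\alpha-\frac m2+\epsilon$, which is $M_2$.
\item For $d=3$: $|C_{k,3,\alpha}|\sim k^{2-2\alpha}$, and \eqref{131} gives $k^{-4\alpha}$ independently of $s$. The total is $2-6\alpha=M_3$.
\end{itemize}

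\textbf{Main obstacle.} The delicate point is justifying \eqref{131} in the needed Sobolev norm uniformly in $y\in U$. The remainder in the asymptotic expansion of $G^k$ must be controlled together with its derivative in $z$, uniformly for $|y-z|$ in a compact range bounded away from zero. I would verify this from the explicit $H$-function and Bessel representations and their asymptotic expansions with uniform remainder bounds (from the Appendix), and then interpolate between $L^{p'}$ and $W^{1,p'}$ as in Lemma \ref{109}. A secondary subtlety is measurability in $\omega$ of the pairing as a function of $y$, so that Fubini applies. This follows because $y\mapsto (G^k-G^k_N)(y-\cdot)$ is continuous into $W^{s,p'}(D)$.
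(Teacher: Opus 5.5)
Your proposal is correct and follows the paper's proof essentially verbatim: bound the inner pairing by $\|f\|_{H^{-s}(D)}\sup_{y\in U}\|G^k(y-\cdot)-G^k_N(y-\cdot)\|_{H^{s}(D)}$ with $s=\frac{d-m}{2}+\epsilon$, apply \eqref{131}, and multiply by the prefactor $|C_{k,d,\alpha}|\sim k^{\frac d2+\frac12-2\alpha}$. The only differences are cosmetic. The paper uses the $p=2$ pairing where you use a general $W^{-s,p}$/$W^{s,p'}$ pairing. Also, the condition $s\le 1$ actually comes from $m>m_{d,\alpha}>d-2$ in Assumption \ref{117}, not from $m<d$ as you state.
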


\begin{proof}
It follows from \eqref{127}, \eqref{F1N}, and Lemma \ref{109} that
\begin{align*}
    | F_{ 1 } ( \hat{x}, k ) - F_{ 1, N } ( \hat{x}, k ) | & = | C_{ k, d, \alpha } | \Big| \int_{ U } e^{ - \mathrm{i} k \hat{x} \cdot y } q ( y ) \int_{ D } ( G^{ k } ( y - z ) - G^{ k }_{ N } ( y - z ) ) f ( z ) dz dy  \Big|
    \\ & \lesssim k^{ \frac{ d }{ 2 } + \frac{ 1 }{ 2 } - 2 \alpha } \sup_{ y \in U } \Big| \int_{ D } ( G^{ k } ( y - z ) - G^{ k }_{ N } ( y - z ) ) f ( z ) dz  \Big|
    \\ & \lesssim k^{ \frac{ d }{ 2 } + \frac{ 1 }{ 2 } - 2 \alpha } \| f \|_{ H^{ - s } ( D ) } \sup_{ y \in U } \| G^{ k } ( y - \cdot ) - G^{ k }_{ N } ( y - \cdot ) \|_{ H^{ s } ( D ) }
    \\ & \lesssim k^{ M_{ d } },
\end{align*}
where we choose $s = \frac{ d - m }{ 2 } + \epsilon$.
\end{proof}

The asymptotic behavior of the term $F_1$ in two and three dimensions is characterized in the following lemma. In what follows, we use $\mathcal{C}$ to denote a generic smooth function, whose specific form may vary from line to line.
 
\begin{lemma}\label{101}
Let $f$, $q$ satisfy Assumption \ref{117}, and $\hat{x}$ satisfy $\hat{x} \cdot \hat{n} \geq 0$. Then, in two and three dimensions, the following estimate holds: 
\begin{align*}
    \mathbb{E} \big( | F_{1} ( \hat{x}, k ) |^2 \big) \lesssim
    \begin{cases}
        k^{ \frac{ 7 }{ 2 } - 6 \alpha - m }, & d = 2,
        \\ k^{ 4 - 8 \alpha - \frac{ m }{ 2 } }, & d = 3.
    \end{cases} 
\end{align*}
\end{lemma}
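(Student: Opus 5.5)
The plan is to split $F_1 = F_{1,N} + (F_1 - F_{1,N})$ and use $\mathbb{E}|F_1|^2 \le 2\mathbb{E}|F_{1,N}|^2 + 2\mathbb{E}|F_1-F_{1,N}|^2$.

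\textbf{Remainder term.} This piece is handled by the argument of Lemma \ref{118}, taken in mean square instead of almost surely. Since $f$ is Gaussian and supported in $D$, $\mathbb{E}\|f\|^2_{H^{-s}(D)}<\infty$ for $s=\frac{d-m}{2}+\epsilon$. Combining this with Lemma \ref{109} gives $\mathbb{E}|F_1-F_{1,N}|^2\lesssim k^{2M_d}$. One then checks that $2M_d$ does not exceed the claimed exponents under Assumption \ref{117}. For $d=3$ this reads $4-12\alpha\le 4-8\alpha-\frac m2$, i.e. $m\le 8\alpha$, which I expect follows from $m<d$ and $\alpha>3/8$. For $d=2$ the analogous comparison is $4-8\alpha-m+2\epsilon \le \frac72-6\alpha-m$. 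If this fails, I would expand $G^k$ one more order, so that the remainder estimate is simply better than needed.

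\textbf{Main term.} For $F_{1,N}$ I write $\mathbb{E}|F_{1,N}|^2$ with the covariance kernel $K_f^c$ and its symbol via \eqref{46}:
\begin{align*}
\mathbb{E}|F_{1,N}|^2 = |C_{k,d,\alpha}|^2 (2\pi)^{-d}\int_{\mathbb{R}^d}\int_D \overline{\Phi_k(z_1)}\,e^{\mathrm{i}(z_1-z_2)\cdot\xi}\sigma^c_f(z_1,\xi)\,\Phi_k(z_2)\,dz_1dz_2\,d\xi,
\end{align*}
where $\Phi_k(z)=\int_U e^{-\mathrm{i}k\hat x\cdot y}q(y)G_N^k(y-z)\,dy$. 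The leading part of $G^k_N$ is $k^{\frac d2+\frac12-2\alpha}|y-z|^{-\frac{d-1}{2}}e^{\mathrm{i}k|y-z|}$, so $\Phi_k(z)$ is an oscillatory integral in $y$ with phase $k(|y-z|-\hat x\cdot y)$. Its critical points satisfy $\frac{y-z}{|y-z|}=\hat x$. Because $y\in U$, $z\in D$, and the hyperplane separates them with $\hat n$ pointing toward $D$, we have $(y-z)\cdot\hat n<0$, while $\hat x\cdot\hat n\ge0$. Hence no critical point exists, and $|\nabla_y\text{phase}|\ge c>0$ uniformly, thanks to $\operatorname{dist}(D,U)>0$. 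Integrating by parts $N_\alpha$ times in $y$, which is allowed since $q\in H^{N_\alpha}$, gives
\[
|\partial_z^\beta\Phi_k(z)|\lesssim k^{\frac d2+\frac12-2\alpha-N_\alpha+|\beta|}.
\]
More precisely, $\Phi_k(z)=e^{\mathrm{i}k\,\psi(z)}a_k(z)$ with a smooth symbol-type amplitude, which is the kind of structure the generic $\mathcal{C}$ notation is meant to carry.

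\textbf{Obstacle.} The main difficulty is the size of $\Phi_k$ as a function of $z$. It oscillates at frequency $\sim k$, so the $\xi$-integral localizes near $|\xi|\sim k$, where $\sigma^c_f\sim\mu^c k^{-m}$. This should yield
\[
\mathbb{E}|F_{1,N}|^2\lesssim |C_{k,d,\alpha}|^2\,\|\Phi_k\|^2_{H^{-m/2}}\lesssim k^{d+1-4\alpha}\,k^{d+1-4\alpha-2N_\alpha}\,k^{-m}\cdot(\text{losses}).
\]
I would first try to bound $\mathfrak C_f$ as an operator $H^{-m/2}\to H^{m/2}$, its order being $-m$, and estimate $\|\Phi_k\|_{H^{-m/2}}$ by interpolation between $L^2$ and $H^{-1}$, both obtained from the nonstationary-phase bound above. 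A possible loss from fractional interpolation explains why the $d=3$ exponent carries only $m/2$. The last step is to verify that the specific choices of $N_\alpha$ in Assumption \ref{117} turn the total power into exactly $\frac72-6\alpha-m$ for $d=2$ and $4-8\alpha-\frac m2$ for $d=3$, and to treat the secondary $|x|^{-3/2}$ term of $G_N^k$ in $d=2$ identically, since it is one power of $k$ smaller.
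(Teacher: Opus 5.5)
Your remainder step is the paper's (Lemma~\ref{118} taken in mean square). The $d=2$ comparison you left open does hold, since it reduces to $\alpha\ge\frac14+\epsilon$. For the main term, however, you take a genuinely different route.

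The paper expands $\mathbb{E}|F_{1,N}|^2$ as a fourfold integral over $U^2\times D^2$. It integrates by parts only in $y_1$, which gives one factor $k^{-N_\alpha}$, and once more in $z_1$ when $d=3$. It then bounds the covariance kernel pointwise by $|z_1-z_2|^{-3/2}$, and its $z_1$-derivative by $|z_1-z_2|^{-5/2}$, both locally integrable. This yields $k^{6-8\alpha-N_\alpha}$ for $d=2$ and $k^{7-8\alpha-N_\alpha}$ for $d=3$. The stated exponents, including the $\frac m2$ for $d=3$, come solely from $N_\alpha\ge\frac52+m-2\alpha$ and $N_\alpha\ge 3+\frac m2$ respectively, not from any interpolation loss.

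Your pairing $F_{1,N}=-C_{k,d,\alpha}\langle f,\chi\Phi_k\rangle$, with $\mathbb{E}|\langle f,\psi\rangle|^2=(\mathfrak{C}_f\bar\psi)(\psi)\lesssim\|\psi\|^2_{H^{-m/2}}$, lets the nonstationary-phase gain enter twice, so it is stronger. For the outgoing part of $G^k_N$ (all of it when $d=3$), the trivial bound $\|\cdot\|_{H^{-m/2}}\le\|\cdot\|_{L^2}$ together with your pointwise estimate already gives $k^{2(d+1-4\alpha)-2N_\alpha}$. That is at most $k^{1-4\alpha-2m}$ for $d=2$ and $k^{2-8\alpha-m}$ for $d=3$, both better than claimed. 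So your \emph{obstacle} is not needed there, and you should not try to land exactly on the paper's exponents. The interpolation you sketch would not give a gain anyway: pointwise bounds on $\partial_z^\beta\Phi_k$ control positive Sobolev norms, not $H^{-1}$.

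The step that does fail is treating the secondary $d=2$ term \emph{identically}. In the paper's $G^k_N$ this term carries $\sin(k|x|-\frac{\pi}{4})$, hence an incoming part $e^{-\mathrm{i}k|y-z|}$. Its $y$-phase $-|y-z|-\hat x\cdot y$ is stationary where $\hat x=(z-y)/|z-y|$, i.e.\ exactly for $\hat x$ pointing from $U$ into $D$, which $\hat x\cdot\hat n\ge 0$ allows. The extra factor $k^{-1}$ alone does not rescue it. The crude bound is $|C_{k,2,\alpha}|^2\|\chi\Phi^{\mathrm{sec}}_k\|^2_{L^2}\lesssim k^{4-8\alpha}$, which exceeds $k^{\frac72-6\alpha-m}$ whenever $m>2\alpha-\frac12$; that is always the case if $\alpha\le\frac34$. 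The paper's proof passes over this in the same way, absorbing the term into $\mathcal{C}$ and integrating by parts as if it were outgoing.

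This is where an $H^{-m/2}$ gain is really needed, and it comes from Fourier localization rather than nonstationary phase. On $\operatorname{supp}\chi$ one has $\Phi^{\mathrm{sec}}_k=\tilde\kappa*(q\,e^{-\mathrm{i}k\hat x\cdot(\cdot)})$ with a cut-off kernel satisfying $\|\tilde\kappa\|_{L^1}\lesssim k^{\frac12-2\alpha}$. Hence $|\widehat{\Phi^{\mathrm{sec}}_k}(\xi)|\lesssim k^{\frac12-2\alpha}|\hat q(\xi+k\hat x)|$. Splitting into $|\xi|\ge k/2$ and $|\xi|<k/2$ and using $q\in H^{N_\alpha}$ gives $\|\chi\Phi^{\mathrm{sec}}_k\|^2_{H^{-m/2}}\lesssim k^{1-4\alpha}(k^{-m}+k^{-2N_\alpha})$. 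The resulting contribution is $\lesssim k^{4-8\alpha-m}\le k^{\frac72-6\alpha-m}$, because $\alpha>\frac14$.
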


\begin{proof}
First, we derive the estimate in two dimensions. Observe that
\begin{align*}
    & \mathbb{E} \big( | F_{ 1, N } ( \hat{x}, k ) |^2 \big)
    \\ & = | C_{ k, 2, \alpha } |^{ 4 }\, \mathbb{E} \bigg[ \int_{ U } e^{ - \mathrm{i} k \hat{x} \cdot y_{ 1 } } q ( y_{ 1 } ) \int_{ D } | y_{ 1 } - z_{ 1 } |^{ - \frac{ 1 }{ 2 } } e^{ \mathrm{i} k | y_{ 1 } - z_{ 1 } | } + \mathcal{C} ( k, y_{ 1 }, z_{ 1 } ) | y_{ 1 } - z_{ 1 } |^{ - \frac{ 3 }{ 2 } } f ( z_{ 1 } ) dz_{ 1 } dy_{ 1 } \\
    &\quad \times \int_{ U } e^{ \mathrm{i} k \hat{x} \cdot y_{ 2 } } \overline{q ( y_{ 2 } )}  \int_{ D } | y_{ 2 } - z_{ 2 } |^{ - \frac{ 1 }{ 2 } } e^{ - \mathrm{i} k | y_{ 2 } - z_{ 2 } | } + \mathcal{C} ( k, y_{ 2 }, z_{ 2 } ) | y_{ 2 } - z_{ 2 } |^{ - \frac{ 3 }{ 2 } } \overline{ f ( z_{ 2 } ) } dz_{ 2 } dy_{ 2 } \bigg]\\ 
    & \simeq k^{ 6 - 8 \alpha } \int_{ U^{ 2 } \times D^{ 2 } } e^{ \mathrm{i} k ( | y_{ 1 } - z_{ 1 } | - | y_{ 2 } - z_{ 2 } | - \hat{x} \cdot ( y_{ 1 } - y_{ 2 } ) ) } \mathcal{C} ( k, y_{ 1 }, y_{ 2 }, z_{ 1 }, z_{ 2 } ) \\ 
    & \quad \times q ( y_{ 1 } ) \overline{q ( y_{ 2 } )} \mathbb{E} [ f ( z_{ 1 } ) \overline{ f ( z_{ 2 } ) } ] dy_{ 1 } dy_{ 2 } dz_{ 1 } dz_{ 2 } \\ 
    & \simeq k^{ 6 - 8 \alpha } \int_{ U^{ 2 } \times D^{ 2 } } e^{ \mathrm{i} k \varphi ( y_{ 1 }, y_{ 2 }, z_{ 1 }, z_{ 2 } ) } \mathcal{C} ( k, y_{ 1 }, y_{ 2 }, z_{ 1 }, z_{ 2 } ) q ( y_{ 1 } ) \overline{q ( y_{ 2 } )} K^{ c }_{ f } ( z_{ 2 }, z_{ 1 } ) dy_{ 1 } dy_{ 2 } dz_{ 1 } dz_{ 2 } \\
    & \simeq k^{ 6 - 8 \alpha } \int_{ U^{ 2 } \times D^{ 2 } } e^{ \mathrm{i} k \varphi ( y_{ 1 }, y_{ 2 }, z_{ 1 }, z_{ 2 } ) } \mathcal{C} ( k, y_{ 1 }, y_{ 2 }, z_{ 1 }, z_{ 2 } ) q ( y_{ 1 } ) \overline{q ( y_{ 2 } )}\\
    &\quad \times \int_{ \mathbb{R}^{ 2 } } e^{ \mathrm{i} ( z_{2} - z_{1} ) \cdot \xi } \sigma^{ c }_{ f } ( z_{2}, \xi ) d\xi dy_{ 1 } dy_{ 2 } dz_{ 1 } dz_{ 2 }, 
\end{align*}
where 
\begin{align*}
    \mathcal{C} ( k, y_{ 1 }, y_{ 2 }, z_{ 1 }, z_{ 2 } ) = O ( 1 ), \quad  k \rightarrow \infty,
\end{align*}
and
\begin{align*}
    \varphi ( y_{ 1 }, y_{ 2 }, z_{ 1 }, z_{ 2 } ) = - \hat{x} \cdot ( y_{ 1 } - y_{ 2 } ) + | y_{ 1 } - z_{ 1 } | - | y_{ 2 } - z_{ 2 } |.
\end{align*}

To obtain decay with respect to $k$, we act on the oscillatory factor $e^{ \mathrm{i} k \varphi }$ using two specifically designed differential operators. Note that
\begin{align*}
    \nabla_{ y_{ 1 } } \varphi = - \hat{ x } + \frac{ y_{ 1 } - z_{ 1 } }{ | y_{ 1 } - z_{ 1 } | }, \quad \nabla_{ z_{ 1 } } \varphi = \frac{ z_{ 1 } - y_{ 1 } }{ | y_{ 1 } - z_{ 1 } | }.
\end{align*}
We define the following two smooth differential operators:
\begin{align}\label{139}
    L_{ y_{ 1 } } : = \frac{ \nabla_{ y_{ 1 } } \varphi \cdot \nabla_{ y_{ 1 } } }{ \mathrm{i} k | \nabla_{ y_{ 1 } } \varphi |^2 }, \quad L_{ z_{ 1 } } : = \frac{ ( z_{ 1 } - y_{ 1 } ) \cdot \nabla_{ z_{ 1 } } }{ \mathrm{i} k |  y_{ 1 } - z_{ 1 } | }.
\end{align}
For all observation directions $\hat x$ satisfying $\hat x\cdot\hat{n}\geq 0$, the quantity $ | \nabla_{ y_{ 1 } } \varphi |$ admits a strictly positive lower bound. Moreover, by direct computation, we have
\begin{align*}
    L_{ y_{ 1 } } ( e^{ \mathrm{i} k \varphi } ) = e^{ \mathrm{i} k \varphi  }, \quad L_{ z_{ 1 } } ( e^{ \mathrm{i} k \varphi } ) = e^{ \mathrm{i} k \varphi }.
\end{align*}

Denote $z_{ 1 } = ( z_{ 1, 1 }, z_{ 1, 2 } )$. Using the operators $L_{y_1}$, and applying integration by parts, we obtain
\begin{align*}
    \mathbb{E} \big( | F_{ 1, N } ( \hat{x}, k ) |^2 \big) & \simeq k^{ 6 - 8 \alpha } \int_{ U^{ 2 } \times D^{ 2 } } ( L_{ y_{ 1 } }^{ N_{ \alpha } } e^{ \mathrm{i} k \varphi } ) \mathcal{C} ( k, y_{ 1 }, y_{ 2 }, z_{ 1 }, z_{ 2 } ) q ( y_{ 1 } ) \overline{q ( y_{ 2 } )} \\
    &\quad \times \int_{ \mathbb{R}^{ 2 } } e^{ \mathrm{i} ( z_{2} - z_{1} ) \cdot \xi } \sigma_{ f }^{ c } ( z_{2}, \xi ) d\xi dy_{ 1 } dy_{ 2 } dz_{ 1 } dz_{ 2 } \\ 
    & \simeq k^{ 6 - 8 \alpha - N_{ \alpha } } \int_{ U^{ 2 } \times D^{ 2 } } e^{ \mathrm{i} k \varphi } \mathcal{J} \sum_{ j = 0 }^{ N_{ \alpha } } \mathcal{C}_{ j } \overline{ q ( y_{ 2 } ) } ( ( k L_{ y_{ 1 } } )^{ j } q ) ( y_{ 1 } ) dy_{ 1 } dy_{ 2 } dz_{ 1 } dz_{ 2 },
\end{align*}
where
\begin{align*}
    \mathcal{J} = \int_{ \mathbb{R}^{ 2 } } e^{ \mathrm{i} ( z_{2} - z_{1} ) \cdot \xi } \sigma^{ c }_{ f } ( z_{2}, \xi ) d\xi.
\end{align*} 
When $z_{ 1 } \neq z_{ 2 }$, by \cite[Lemmas 3.1--3.2]{Li-Liu-Ma-CMP-21}, 
\begin{align}\label{140}
    | \mathcal{ J } | & = \Big| \int_{ \mathbb{R}^{ 2 } } e^{ \mathrm{i} ( z_{2} - z_{1} ) \cdot \xi } \sigma^{ c }_{ f } ( z_{2}, \xi ) d \xi \Big|
    \notag \\ & = | z_{2} - z_{1} |^{ - \frac{ 3 }{ 2 } } \Big| \int_{ \mathbb{R}^{ 2 } } ( - \Delta )_{ \xi }^{ \frac{ 3 }{ 4 } } ( e^{ \mathrm{i} ( z_{2} - z_{1} ) \cdot \xi } ) \sigma^{ c }_{ f } ( z_{2}, \xi ) d \xi \Big| 
    \notag \\ & = | z_{2} - z_{1} |^{ - \frac{ 3 }{ 2 } } \Big| \int_{ \mathbb{R}^{ 2 } } e^{ \mathrm{i} ( z_{2} - z_{1} ) \cdot \xi } ( - \Delta )_{ \xi }^{ \frac{ 3 }{ 4 } } \sigma^{ c }_{ f } ( z_{2}, \xi ) d \xi \Big|
    \notag \\ & \lesssim | z_{2} - z_{1} |^{ - \frac{ 3 }{ 2 } } \int_{ \mathbb{R}^{ 2 } } \langle \xi \rangle^{ - m - \frac{ 3 }{ 2 } } d \xi \lesssim | z_{ 2 } - z_{ 1 } |^{ - \frac{ 3 }{ 2 } }.
\end{align}
Using \eqref{140} and $q \in H^{ N_{ \alpha } }_{ 0 } ( U )$, we deduce
\begin{align*}
    \mathbb{E} \big( | F_{ 1, N } ( \widehat{x}, k ) |^2 \big) & \lesssim k^{ 6 - 8 \alpha - N_{ \alpha } } \int_{ U^{ 2 } \times D^{ 2 } } | z_{ 1 } - z_{ 2 } |^{ - \frac{ 3 }{ 2 } } \sum_{ j = 0 }^{ N_{ \alpha } } \big| \overline{ q ( y_{ 2 } ) } ( k L_{ y_{ 1 } } )^{ j } q ( y_{ 1 } ) \big| dy_{ 1 } dy_{ 2 } dz_{ 1 } dz_{ 2 } 
    \\ & \lesssim k^{ 6 - 8 \alpha - N_{ \alpha } }.
\end{align*}
By Lemma \ref{118}, since $N_{ \alpha } = \lceil \frac{ 5 }{ 2 } + m - 2 \alpha \rceil$, $m > \frac{ 5 }{ 2 } - 2 \alpha $, and $\alpha > \frac{1}{4}$ in $\mathbb{R}^{2}$, we get
\begin{align}\label{142}
    \mathbb{E} \big( | F_{1} ( \hat{x}, k ) |^2 \big) & \lesssim \mathbb{E} \big( | F_{ 1, N } ( \hat{x}, k ) |^2 \big) + \mathbb{E} \big( | F_{1} ( \hat{x}, k ) - F_{ 1, N } ( \hat{x}, k ) |^2 \big) 
    \notag \\ & \lesssim k^{ 6 - 8 \alpha - N_{ \alpha } } + k^{ 4 - 8 \alpha - m + \epsilon } \lesssim k^{ \frac{ 7 }{ 2 } - 6 \alpha - m }, \quad k \to \infty.
\end{align}

Next, we consider the case in three dimensions. Note that
\begin{align*}
     \mathbb{E} \big( | F_{ 1, N } (\hat{x}, k ) |^2 \big) & = | C_{ k, 3, \alpha } |^{ 4 }\, \mathbb{E} \Big[ \int_{ U } e^{ - \mathrm{i} k \hat{x} \cdot y_{ 1 } } q ( y_{ 1 } ) \int_{ D } | y_{ 1 } - z_{ 1 } |^{ - 1 } e^{ \mathrm{i} k | y_{ 1 } - z_{ 1 } | } f ( z_{ 1 } ) dz_{ 1 } dy_{ 1 } \\
     & \quad \times \int_{ U } e^{ \mathrm{i} k \hat{x} \cdot y_{ 2 } } \overline{q( y_{ 2 } )} \int_{ D } | y_{ 2 } - z_{ 2 } |^{ - 1 } e^{ - \mathrm{i} k | y_{ 2 } - z_{ 2 } | } \overline{ f ( z_{ 2 } ) } dz_{ 2 } dy_{ 2 } \Big]\\ 
     & \simeq k^{ 8 - 8 \alpha } \int_{ U^{ 2 } \times D^{ 2 } } e^{ \mathrm{i} k ( | y_{ 1 } - z_{ 1 } | - | y_{ 2 } - z_{ 2 } | - \hat{x} \cdot ( y_{ 1 } - y_{ 2 } ) ) } \mathcal{C} ( y_{ 1 }, y_{ 2 }, z_{ 1 }, z_{ 2 } ) \\
     &\quad \times q ( y_{ 1 } )\overline{ q ( y_{ 2 } )} \mathbb{E} [ f ( z_{ 1 } ) \overline{ f ( z_{ 2 } ) } ] dy_{ 1 } dy_{ 2 } dz_{ 1 } dz_{ 2 } \\ 
     & \simeq k^{ 8 - 8 \alpha } \int_{ U^{ 2 } \times D^{ 2 } } e^{ \mathrm{i} k ( | y_{ 1 } - z_{ 1 } | - | y_{ 2 } - z_{ 2 } | - \hat{x} \cdot ( y_{ 1 } - y_{ 2 } ) ) } \mathcal{C} ( y_{ 1 }, y_{ 2 }, z_{ 1 }, z_{ 2 } )\\
     &\quad \times q ( y_{ 1 } ) \overline{q ( y_{ 2 } )} K^{ c }_{ f } ( z_{ 2 }, z_{ 1 } ) dy_{ 1 } dy_{ 2 } dz_{ 1 } dz_{ 2 } \\
     & \simeq k^{ 8 - 8 \alpha } \int_{ U^{ 2 } \times D^{ 2 } } e^{ \mathrm{i} k \varphi ( y_{ 1 }, y_{ 2 }, z_{ 1 }, z_{ 2 } ) } \mathcal{C} ( y_{ 1 }, y_{ 2 }, z_{ 1 }, z_{ 2 } ) q ( y_{ 1 } ) \overline{q ( y_{ 2 } )}\\
     &\quad \times \int_{ \mathbb{R}^{ 3 } } e^{ \mathrm{i} ( z_{2} - z_{1} ) \cdot \xi } \sigma^{ c }_{ f } ( z_{2}, \xi ) d\xi dy_{ 1 } dy_{ 2 } dz_{ 1 } dz_{ 2 }, 
\end{align*}
where
\begin{equation*}
    \varphi ( y_{ 1 }, y_{ 2 }, z_{ 1 }, z_{ 2 } ) = - \hat{x} \cdot ( y_{ 1 } - y_{ 2 } ) + | y_{ 1 } - z_{ 1 } | - | y_{ 2 } - z_{ 2 } |.
\end{equation*}

Denote $z_{ 1 } = ( z_{ 1, 1 }, z_{ 1, 2 }, z_{ 1, 3 } )$. Applying integration by parts and using the operators defined in \eqref{139}, we get
\begin{align*}
    \mathbb{E} \big( | F_{ 1, N } ( \hat{x}, k ) |^2 \big) & \simeq k^{ 8 - 8 \alpha } \int_{ U^{ 2 } \times D^{ 2 } } ( L_{ y_{ 1 } }^{ N_{ \alpha } } L_{ z_{ 1 } } e^{ \mathrm{i} k \varphi } )  \mathcal{C} ( y_{ 1 }, y_{ 2 }, z_{ 1 }, z_{ 2 } ) q ( y_{ 1 } ) \overline{q ( y_{ 2 } )} \\
    & \quad \times \int_{ \mathbb{R}^{ 3 } } e^{ \mathrm{i} ( z_{2} - z_{1} ) \cdot \xi } \sigma_{ f }^{ c } ( z_{2}, \xi ) d\xi dy_{ 1 } dy_{ 2 } dz_{ 1 } dz_{ 2 }\\ 
    & \simeq k^{ 7 - 8 \alpha - N_{ \alpha } } \int_{ U^{ 2 } \times D^{ 2 } } e^{ \mathrm{i} k \varphi } \Big\{ \mathcal{L}_{ 1 } \sum_{ l = 0 }^{ N_{ \alpha } } \mathcal{C}_{ l } \overline{q ( y_{ 2 } )} ( ( k L_{ y_{ 1 } } )^{ l } q ) ( y_{ 1 } ) \\
    & \quad + \sum_{ j = 1 }^{ 3 } \mathcal{L}_{ 2, j } \sum_{ l = 0 }^{ N_{ \alpha } } \mathcal{C}_{ l, j } \overline{q ( y_{ 2 } )} ( ( k L_{ y_{ 1 } } )^{ l } q ) ( y_{ 1 } ) \Big\} dy_{ 1 } dy_{ 2 } dz_{ 1 } dz_{ 2 },
\end{align*}
where
\begin{align*}
    \mathcal{L}_{ 1 } = \int_{ \mathbb{R}^{ 3 } } e^{ \mathrm{i} ( z_{2} - z_{1} ) \cdot \xi } \sigma^{ c }_{ f } ( z_{2}, \xi ) d\xi, \quad \mathcal{L}_{ 2, j } = \partial_{ z_{ 1, j } } \int_{ \mathbb{R}^{ 3 } } e^{ \mathrm{i} ( z_{2} - z_{1} ) \cdot \xi } \sigma^{ c }_{ f } ( z_{2}, \xi ) d\xi.
\end{align*}

For $z_{ 1 } \neq z_{ 2 }$, $m \in ( \frac{ 15 }{ 4 } - 2 \alpha, 3 )$, and $\alpha \in ( \frac{ 3 }{ 8 }, 1 )$, applying repeated integration by parts in $\xi$, as used in (\ref{140}), yields
\begin{align*}
    | \mathcal{L}_{ 1 } | \lesssim | z_{ 1 } - z_{ 2 } |^{ - \frac{ 3 }{ 2 } }, \quad | \mathcal{L}_{ 2, k } | \lesssim | z_{ 1 } - z_{ 2 } |^{ - \frac{ 5 }{ 2 } }.
\end{align*}
Thus, for $q \in H^{ N_{ \alpha } }_{ 0 } ( U )$, 
\begin{align*}
    \mathbb{E} \big( | F_{ 1, N } ( \hat{x}, k ) |^{ 2 } \big) &\lesssim k^{ 7 - 8 \alpha - N_{ \alpha } } \int_{ U^{ 2 } \times D^{ 2 } } | z_{ 1 } - z_{ 2 } |^{ - \frac{ 5 }{ 2 } } \sum_{ l = 0 }^{ N_{ \alpha } } | \overline{q ( y_{ 2 } )} ( k L_{ y_{ 1 } } )^{ l } q ( y_{ 1 } ) | dy_{ 1 } dy_{ 2 } dz_{ 1 } dz_{ 2 } \\
    &\lesssim k^{ 7 - 8 \alpha - N_{ \alpha } }.
\end{align*}
Using Lemma \ref{118}, since $N_{ \alpha } = \lceil 3 + \frac{ m }{ 2 } \rceil$, $m \in ( \frac{ 15 }{ 4 } - 2 \alpha, 3 )$, and $\alpha \in ( \frac{ 3 }{ 8 }, 1 )$ in $\mathbb{ R }^{ 3 }$,
\begin{align}\label{143}
    \mathbb{E} \big( | F_{1} ( \hat{x}, k ) |^2 \big) & \lesssim \mathbb{E} \big( | F_{ 1, N } ( \hat{x}, k ) |^2 \big) + \mathbb{E} \big( | F_{1} ( \hat{x}, k ) - F_{ 1, N } ( \hat{x}, k ) |^2 \big) 
    \notag \\ & \lesssim k^{ 7 - 8 \alpha - N_{ \alpha } } + k^{ 4 - 12 \alpha }\lesssim k^{ 4 - 8 \alpha - \frac{ m }{ 2 } }.
\end{align}
Combining (\ref{142}) and (\ref{143}), we complete the proof.
\end{proof}

\subsection{Estimates of $F_{ 2 }$}

First, we estimate the term  $F_{ 2 }$ in two and three dimensions. 

\begin{lemma}\label{113}
Let $d = 2$ or $3$. Let $s \in [ \frac{ \alpha }{ 2 }, \alpha )$ and $p \in ( 1, \infty ) $ satisfy $\frac{ s }{ p } > \frac{ d - m }{ 2 }$. Under Assumption \ref{117}, the following estimate holds almost surely:
\begin{align*}
    | F_{2} ( \hat{x}, k ) | \lesssim k^{ \frac{ d }{ 2 } + \frac{ 1 }{ 2 } - 2 \alpha + \frac{ 5s }{ p } - 4s }.
\end{align*}
\end{lemma}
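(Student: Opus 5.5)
The plan is to bound the prefactor, the oscillatory test function, and the Born remainder separately, and to combine them by the duality between $H^{t}_{-1/2-\epsilon}(\mathbb{R}^d)$ and $H^{-t}_{1/2+\epsilon}(\mathbb{R}^d)$, where $t:=s/p$. By \eqref{Ckda} we have $|C_{k,d,\alpha}|\lesssim k^{\frac d2+\frac12-2\alpha}$ for $d=2,3$, which accounts for the first part of the exponent. It then remains to show that, almost surely,
\begin{align*}
\Big|\int_U e^{-\mathrm{i}k\hat x\cdot y}\,q(y)\,w_k(y)\,dy\Big|\lesssim k^{\frac{5s}{p}-4s},
\qquad w_k:=\sum_{j\ge 2}(-\mathcal K_k)^{j-1}\mathcal H_k f .
\end{align*}

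For the random factor, I will first fix the realization. Since $t>\frac{d-m}{2}$, Lemma \ref{63} (with $p=2$) and the compact support of $f$ in $D$ give $f\in H^{-t}_{1/2+\epsilon}(\mathbb{R}^d)$ almost surely; the weight is harmless because of the compact support. Theorem \ref{6} then gives $\|\mathcal H_k f\|_{H^{t}_{-1/2-\epsilon}}\lesssim k^{-2s(1-\frac1p)}\|f\|_{H^{-t}_{1/2+\epsilon}}$. Theorem \ref{112}, applied $j-1$ times, gives
\[
\|(-\mathcal K_k)^{j-1}\mathcal H_k f\|_{H^{t}_{-1/2-\epsilon}}\lesssim \big(Ck^{-2s(1-\frac1p)}\big)^{j}\|f\|_{H^{-t}_{1/2+\epsilon}}.
\]
For $k$ large, the geometric ratio $Ck^{-2s(1-1/p)}$ is less than $1/2$. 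Summing over $j\ge2$ then yields $\|w_k\|_{H^{t}_{-1/2-\epsilon}}\lesssim k^{-4s(1-\frac1p)}=k^{-4s+\frac{4s}{p}}$ almost surely, with the constant depending only on the realization through $\|f\|_{H^{-t}_{1/2+\epsilon}}$.

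For the deterministic factor, I will write the integral as the pairing $\langle g_k,w_k\rangle$ with $g_k(y):=e^{-\mathrm{i}k\hat x\cdot y}q(y)$. The duality gives $|\langle g_k,w_k\rangle|\le\|g_k\|_{H^{-t}_{1/2+\epsilon}}\|w_k\|_{H^{t}_{-1/2-\epsilon}}$, and it suffices to show $\|g_k\|_{H^{t}_{1/2+\epsilon}}\lesssim k^{t}$. Since $\operatorname{supp}q\subset U$ is bounded, the weight $\langle y\rangle^{1/2+\epsilon}$ is comparable to a constant there. Moreover, $q\in H^{N_\alpha}$ with $N_\alpha\ge1\ge t$ when $d=2,3$. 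Interpolating between $\|g_k\|_{L^2}=\|q\|_{L^2}$ and $\|g_k\|_{H^1}\lesssim k\|q\|_{H^1}$ (the derivative falls on the phase or on $q$) gives $\|g_k\|_{H^{t}}\lesssim k^{t}$. Combining the three pieces gives $k^{\frac d2+\frac12-2\alpha}\cdot k^{s/p}\cdot k^{-4s+4s/p}$, which is the claimed rate $k^{\frac d2+\frac12-2\alpha+\frac{5s}{p}-4s}$.

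The main point needing care is the duality step. The weighted spaces appearing in Theorem \ref{112} are $H^{t}_{-1/2-\epsilon}$, so the test function must lie in $H^{-t}_{1/2+\epsilon}$, and I have to make sure the $k$-growth of $g_k$ is only $k^{t}$ and not worse. Bounding it through the stronger $H^{t}$ norm, as above, avoids any delicate computation in negative Sobolev norms. I also need to check that the threshold for $k$ in Theorems \ref{6} and \ref{112} (namely $k>2^{1/(2\alpha-2s)}$, plus the contraction condition) is compatible with the almost sure statement. The threshold is deterministic, and the realization enters only through $\|f\|_{H^{-t}_{1/2+\epsilon}}<\infty$, so this is fine.
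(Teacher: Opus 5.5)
Your argument is correct, and its skeleton is the paper's: the prefactor bound $|C_{k,d,\alpha}|\lesssim k^{\frac d2+\frac12-2\alpha}$, a duality pairing between $H^{s/p}_{-1/2-\epsilon}$ and $H^{-s/p}_{1/2+\epsilon}$, and the geometric Born series controlled by Theorems~\ref{6} and~\ref{112}. The one real difference is where $q$ goes.

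The paper pairs $e^{-\mathrm{i}k\hat x\cdot(\cdot)}\chi_U$ against $q\,w_k\in H^{-s/p}_{1/2+\epsilon}$. It places the oscillatory cutoff in the positive-order space $H^{s/p}_{-1/2-\epsilon}$, where its norm genuinely grows like $k^{s/p}$. This needs the multiplier estimate $\|q\varphi\|_{H^{-s/p}_{1/2+\epsilon}}\lesssim\|\varphi\|_{H^{s/p}_{-1/2-\epsilon}}$ for $q\in L^2(U)$, imported from Li--Liu--Ma. You instead absorb $q$ into the test function $g_k$ and keep $w_k$ alone in $H^{s/p}_{-1/2-\epsilon}$. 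So you need no multiplier theorem, only $q\in H^1$, which Assumption~\ref{117} provides since $N_\alpha\ge 1$ for $d=2,3$.

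Two comments on your version. First, ``the weight is comparable to a constant on $U$'' is not the right justification. The weight acts on $(I-\Delta)^{\pm t/2}g_k$, which is not supported in $U$. The correct reason is that $(I-\Delta)^{-t/2}$, for $t>0$, is convolution with an integrable, exponentially decaying Bessel kernel. By Peetre's inequality it is therefore bounded on $L^2_{1/2+\epsilon}$, and this also gives the embedding $H^{t}_{1/2+\epsilon}\hookrightarrow H^{-t}_{1/2+\epsilon}$ that you use implicitly.

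Second, the same fact gives $\|g_k\|_{H^{-t}_{1/2+\epsilon}}\lesssim\|q\|_{L^2_{1/2+\epsilon}}\lesssim 1$ directly, with no interpolation and no regularity of $q$. So your placement of $q$ actually yields the sharper rate $k^{\frac d2+\frac12-2\alpha+\frac{4s}{p}-4s}$, which of course implies the stated one. The extra factor $k^{s/p}$ in the lemma is an artifact of putting the oscillatory factor on the positive-index side of the pairing.
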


\begin{proof}
By Theorem~\ref{6}, Theorem~\ref{112}, \cite[Theorem 2.2]{Li-Liu-Ma-CMP-21}, and \cite[Lemma 3.4]{Li-Liu-Ma-CMP-21}, we obtain
\begin{align*}
    | F_{2} ( \hat{x}, k ) | & = \Big| C_{ k, d, \alpha } \int_{ U } e^{ - \mathrm{i} k \hat{x} \cdot y } q ( y ) \sum_{ j \geq 2 } ( ( - \mathcal{K}_{k} )^{ j - 1 }  \mathcal{H}_{k} f ) ( y ) dy  \Big|
    \\ & \leq | C_{ k, d, \alpha } | \sum_{ j \geq 2 } \Big| \int_{ U } e^{ - \mathrm{i} k \hat{x} \cdot y } q ( y ) ( ( - \mathcal{K}_{k} )^{ j - 1 }  \mathcal{H}_{k} f ) ( y ) dy  \Big|
    \\ & \lesssim k^{ \frac{ d }{ 2 } + \frac{ 1 }{ 2 } - 2 \alpha } \| e^{ - \mathrm{i} k \hat{x} \cdot ( \cdot ) } \chi_{ U } ( \cdot ) \|_{ H^{ s / p }_{ - 1/2 - \epsilon } ( \mathbb{R}^{ d } ) } \sum_{ j \geq 2 } \| q ( - \mathcal{K}_{k} )^{ j - 1 }  \mathcal{H}_{k} f \|_{ H^{ - s / p }_{ 1/2 + \epsilon } ( \mathbb{R}^{ d } ) }
    \\ & \lesssim k^{ \frac{ d }{ 2 } + \frac{ 1 }{ 2 } - 2 \alpha } k^{ \frac{ s }{ p } } \sum_{ j \geq 2 } \| ( - \mathcal{K}_{k} )^{ j - 1 }  \mathcal{H}_{k} f \|_{ H^{ s / p }_{ - 1 / 2 - \epsilon } ( \mathbb{R}^{ d } ) }
    \\ & \lesssim k^{ \frac{ d }{ 2 } + \frac{ 1 }{ 2 } - 2 \alpha + \frac{ s }{ p } } \sum_{ j \geq 2 } \| \mathcal{ K }_{ k } \|_{ \mathcal{ L } ( H^{ s / p }_{ - 1 / 2 - \epsilon } ( \mathbb{R}^{ d } ) ) }^{ j - 1 } \| \mathcal{ H }_{ k } \|_{ \mathcal{ L } ( H^{ - s / p }_{ 1 / 2 + \epsilon } ( \mathbb{ R }^{ d } ), H^{ s / p }_{ - 1 / 2 - \epsilon } ( \mathbb{ R }^{ d } ) ) } \| f \|_{ H^{ - s / p }_{ 1 / 2 + \epsilon } ( \mathbb{ R }^{ d } ) }
    \\ & \lesssim k^{ \frac{ d }{ 2 } + \frac{ 1 }{ 2 } - 2 \alpha + \frac{ s }{ p } } \sum_{ j \geq 2 } k^{ - 2 s ( 1 - \frac{ 1 }{ p } ) ( j - 1 ) } k^{ - 2 s ( 1 - \frac{ 1 }{ p } ) } \lesssim k^{ \frac{ d }{ 2 } + \frac{ 1 }{ 2 } - 2 \alpha + \frac{ 5 s }{ p } - 4s }, \quad \text{a.s.},
\end{align*}
where $\chi_{ U } \in C_{ 0 }^{ \infty } ( \mathbb{ R }^{ d } )$ satisfies $\chi_{ U } ( x ) = 1$ for $x \in U$.
\end{proof}

Next, we estimate the quantity $F_{ 1 } + F_{ 2 }$ in the one-dimensional case.

\begin{lemma}\label{144}
Let $d = 1$. Let $s \in [ \frac{ \alpha }{ 2 }, \alpha )$ and $p \in ( 1, \infty )$ satisfy $\frac{ s }{ p } > \frac{ 1 - m }{ 2 }$. Under Assumption \ref{117}, the following estimate holds almost surely:
\begin{align*}
    | F_{ 1 } ( \hat{ x }, k ) + F_{ 2 } ( \hat{ x }, k ) | \lesssim k^{ - 2 \alpha + \frac{ 3 s }{ p } - 2s + 1 }.
\end{align*}
\end{lemma}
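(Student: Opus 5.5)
Since $u^\infty=F_0+F_1+F_2$, the sum $F_1+F_2$ is the far-field contribution of the whole Born tail:
\begin{align*}
F_1(\hat x,k)+F_2(\hat x,k) = -C_{k,1,\alpha}\int_U e^{-\mathrm{i}k\hat x y}\, q(y)\sum_{j\ge 1}\big((-\mathcal K_k)^{j-1}\mathcal H_k f\big)(y)\,dy .
\end{align*}
I will estimate it as a single series, following the proof of Lemma \ref{113}, but with the sum starting at $j=1$. In one dimension $|C_{k,1,\alpha}|\simeq k^{1-2\alpha}$ by \eqref{Ckda}, so the prefactor is $k^{1-2\alpha}$ instead of $k^{\frac d2+\frac12-2\alpha}$.

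Pull the absolute value inside the series and treat each term as a duality pairing between $H^{s/p}_{-1/2-\epsilon}(\mathbb R)$ and $H^{-s/p}_{1/2+\epsilon}(\mathbb R)$. Insert a cutoff $\chi_U\in C_0^\infty(\mathbb R)$ with $\chi_U=1$ on $U$. As in Lemma \ref{113}, bound the oscillatory test function by
\begin{align*}
\|e^{-\mathrm{i}k\hat x(\cdot)}\chi_U\|_{H^{s/p}_{-1/2-\epsilon}}\lesssim k^{s/p},
\end{align*}
which follows by interpolating between $L^2$ and $H^1$, since one derivative costs a factor $k$. Next, by \cite[Theorem 2.2]{Li-Liu-Ma-CMP-21} with $q\in L^2(U)$ (here $N_\alpha=0$), multiplication by $q$ maps $H^{s/p}_{-1/2-\epsilon}$ into $H^{-s/p}_{1/2+\epsilon}$. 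This gives
\begin{align*}
|F_1+F_2|\lesssim k^{1-2\alpha+\frac sp}\sum_{j\ge1}\big\|(-\mathcal K_k)^{j-1}\mathcal H_k f\big\|_{H^{s/p}_{-1/2-\epsilon}(\mathbb R)} .
\end{align*}

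Each summand is bounded by Theorem \ref{112} applied $j-1$ times and Theorem \ref{6} applied once, which gives the factor $k^{-2s(1-\frac1p)j}\,\|f\|_{H^{-s/p}_{1/2+\epsilon}}$. The norm $\|f\|_{H^{-s/p}_{1/2+\epsilon}}$ is finite almost surely by Lemma \ref{63}, because $f$ is compactly supported and $\frac sp>\frac{1-m}{2}$. For large $k$ the geometric series is dominated by its first term $j=1$, so
\begin{align*}
|F_1+F_2|\lesssim k^{1-2\alpha+\frac sp-2s+\frac{2s}{p}} = k^{-2\alpha+\frac{3s}{p}-2s+1}\quad\text{a.s.},
\end{align*}
which is the claimed rate.

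The only step that needs care is the bound $k^{s/p}$ for the plane-wave norm in the weighted space with negative weight. This is immediate because $\chi_U$ is compactly supported, so the weight is harmless. The hypothesis $s\ge\alpha/2$ and the condition $k>2^{1/(2\alpha-2s)}$ guarantee that Theorems \ref{6} and \ref{112} apply and that $\|\mathcal K_k\|<1$, which makes the series summable.
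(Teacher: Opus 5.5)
Your proposal is correct and is essentially the paper's own proof: the same duality pairing of $e^{-\mathrm{i}k\hat x(\cdot)}\chi_U\in H^{s/p}_{-1/2-\epsilon}(\mathbb{R})$ (norm $\lesssim k^{s/p}$) against $q\,(-\mathcal K_k)^{j-1}\mathcal H_k f\in H^{-s/p}_{1/2+\epsilon}(\mathbb{R})$, then Theorems \ref{6} and \ref{112}, then summation of the geometric series starting at $j=1$. One small point of precision: the condition $k>2^{1/(2\alpha-2s)}$ only makes Theorems \ref{6} and \ref{112} applicable, whereas $\|\mathcal K_k\|<1$ (and domination of the series by its $j=1$ term) also needs $k$ large enough to absorb the constant in Theorem \ref{112}; your ``for large $k$'' already covers this.
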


\begin{proof}
By Theorem~\ref{6}, Theorem~\ref{112}, \cite[Theorem 2.2]{Li-Liu-Ma-CMP-21}, and \cite[Lemma 3.4]{Li-Liu-Ma-CMP-21}, we have  
\begin{align*}
    | F_{ 1 } ( \hat{x}, k ) + F_{2} ( \hat{x}, k ) | & \leq | C_{ k, 1, \alpha } | \Big| \int_{ U } e^{ - \mathrm{ i } k \hat{x} \cdot y } q ( y ) \sum_{ j \geq 1 } ( ( - \mathcal{K}_{ k } )^{ j - 1 }  \mathcal{ H }_{k} f ) ( y ) dy  \Big|
    \\ & \lesssim k^{ 1 - 2 \alpha } \sum_{ j \geq 1 } \Big| \int_{ U } e^{ - \mathrm{ i } k \hat{x} \cdot y } q ( y ) ( ( - \mathcal{K}_{k} )^{ j - 1 }  \mathcal{ H }_{k} f ) ( y ) dy  \Big|
    \\ & \lesssim k^{ 1 - 2 \alpha } \| e^{ - \mathrm{ i } k \hat{x} \cdot ( \cdot ) } \chi_{ U } ( \cdot ) \|_{ H^{ s / p }_{ - 1/2 - \epsilon } ( \mathbb{ R } ) } \sum_{ j \geq 1 } \|  q ( - \mathcal{K}_{k} )^{ j - 1 }  \mathcal{ H }_{k} f  \|_{ H^{ - s / p }_{ 1/2 + \epsilon } ( \mathbb{ R } ) }
    \\ & \lesssim k^{ 1 - 2 \alpha } k^{ \frac{ s }{ p } } k^{ - 2 s ( 1 - \frac{ 1 }{ p } ) }\lesssim k^{ - 2 \alpha + \frac{ 3 s }{ p } - 2s + 1 }, \quad \text{a.s.},
\end{align*}
where $\chi_{ U } \in C_{ 0 }^{ \infty } ( \mathbb{ R } )$ satisfies $\chi_{ U } ( x ) = 1$ for $x \in U$.
\end{proof}

\subsection{Uniqueness}

The uniqueness of the inverse scattering problem is established in the following theorem, which is the main theoretical result of this study.

\begin{theorem}
Suppose that $f$ and $q$ satisfy Assumption \ref{117}. Then the data set
\begin{align*}
    \mathcal{M}_{f} ( \omega ) = \{ u^{ \infty } ( \hat{x}, k ) : \forall\, \hat{x} \in \mathbb{S}^{ d - 1 },\, k>0  \}
\end{align*}
uniquely determines $\mu^{ c }$ and $\mu^{ r }$ almost surely for each fixed $\omega \in \Omega$. Moreover, they can be recovered via
\begin{align}\label{120}
    \hat{ \mu^{ c } } ( \tau \hat{x} ) =  
    \begin{cases}
        \lim \limits_{ K \to \infty } \frac{ C_{ d, \alpha }^{ c } }{ K } \int_{ K }^{ 2K }  k^{ m + 4 \alpha - d - 1 } \overline{ u^{ \infty } ( \hat{x}, k ) } u^{ \infty } ( \hat{x}, k + \tau ) dk, & \hat{x} \cdot \hat{ n } \geq 0,\\[5pt]
        \overline{ \hat{ \mu^{ c } } ( - \tau \hat{x} ) }, & \hat{x} \cdot \hat{ n } < 0,
    \end{cases}
\end{align}
and
\begin{align}\label{136}
    \hat{ \mu^{ r } } ( \tau \hat{x} ) =  
    \begin{cases}
        \lim \limits_{ K \to \infty } \frac{ C_{ d, \alpha }^{ r } }{ K } \int_{ K }^{ 2K }  k^{ m + 4 \alpha - d - 1 } u^{ \infty } ( - \hat{x}, k ) u^{ \infty } ( \hat{x}, k + \tau ) dk, & \hat{x} \cdot \hat{ n } \geq 0,\\[5pt] \overline{ \hat{ \mu^{ r } } ( - \tau \hat{x} ) }, & \hat{x} \cdot \hat{ n } < 0,
    \end{cases}
\end{align}
for all $\tau \geq 0$, where $u^{ \infty } ( \hat{x}, k ) \in \mathcal{M}_{ f }$, and
\begin{align*}
    C_{ d,\alpha }^{ c } = 
    \begin{cases}
        4 \alpha^{ 2 }, & d = 1, 
        \\ 8 \pi \alpha^{ 2 }, & d = 2, 
        \\ 16 \pi^{ 2 } \alpha^{ 2 }, & d = 3,
    \end{cases}
    \quad C_{ d, \alpha }^{ r } = 
    \begin{cases}
        - 4 \alpha^{ 2 }, & d = 1, 
        \\ - 8 \pi \alpha^{ 2 } \mathrm{i}, & d = 2, 
        \\ 16 \pi^{ 2 } \alpha^{ 2 }, & d = 3.
    \end{cases}
\end{align*}
\end{theorem}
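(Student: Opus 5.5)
The plan is to reduce everything to Theorem \ref{110}, which already recovers $\hat{\mu^c}$ and $\hat{\mu^r}$ from $F_0$ alone. Write $u^\infty = F_0 + F_1 + F_2$. The task is then to show that, after weighting by $k^{m+4\alpha-d-1}$ and averaging over $[K,2K]$, every product involving $F_1$ or $F_2$ tends to zero almost surely. We treat the case $\hat x\cdot\hat n\ge 0$ first.

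For the covariance formula \eqref{120}, expand
\begin{align*}
\overline{u^\infty(\hat x,k)}\,u^\infty(\hat x,k+\tau)=\sum_{i,j=0}^{2}\overline{F_i(\hat x,k)}\,F_j(\hat x,k+\tau).
\end{align*}
The $(0,0)$ term gives the limit by \eqref{103}. For the cross terms we use Cauchy--Schwarz in $k$:
\begin{align*}
\frac1K\int_K^{2K}k^{\gamma}|F_iF_j|\,dk\le\Big(\frac1K\int_K^{2K}k^{\gamma}|F_i|^2dk\Big)^{1/2}\Big(\frac1K\int_K^{2K}k^{\gamma}|F_j|^2dk\Big)^{1/2},\qquad \gamma=m+4\alpha-d-1.
\end{align*}
The factor with $F_0$ stays bounded almost surely, since \eqref{103} with $\tau=0$ gives convergence to $\hat{\mu^c}(0)/C^c_{d,\alpha}$. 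So it suffices to show that $\frac1K\int_K^{2K}k^{\gamma}|F_j|^2dk\to0$ a.s. for $j=1,2$, with the obvious shift $k\mapsto k+\tau$ when the index sits in the second slot.

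\emph{Bounding $F_2$ (and $F_1+F_2$ when $d=1$).} These bounds are pathwise. Lemma \ref{113} gives $k^{\gamma}|F_2|^2\lesssim k^{\gamma+d+1-4\alpha+10s/p-8s}$. Lemma \ref{144} gives, for $d=1$, the exponent $m+4\alpha-2+2(1-2\alpha+3s/p-2s)$. In $d=1$ we group $F_1+F_2$ together instead of splitting them. The required inequality is that these exponents are negative for some admissible $s\in[\alpha/2,\alpha)$ and $p$ with $s/p>(d-m)/2$. I would take $s$ close to $\alpha$ and $s/p$ just above $(d-m)/2$. Checking that this is possible exactly when $m>m_{d,\alpha}$ and $\alpha>\alpha_d$ is the routine but fiddly step, and is presumably where those thresholds come from.

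\emph{Bounding $F_1$ for $d=2,3$.} This is the main obstacle, because Lemma \ref{101} only controls $\mathbb E|F_1|^2$, not the pathwise size. Here I would argue as follows.
\begin{itemize}
\item Set $X_K=\frac1K\int_K^{2K}k^{\gamma}|F_1(\hat x,k)|^2dk$.
\item Lemma \ref{101} gives $\mathbb E X_K\lesssim K^{-\delta}$ for some $\delta>0$. This needs checking: in $d=2$ the exponent is $\gamma+\tfrac72-6\alpha-m=\tfrac12-2\alpha<0$, and in $d=3$ it is $\tfrac m2-4\alpha<0$.
\item Along the dyadic sequence $K=2^n$, Chebyshev and Borel--Cantelli give $X_{2^n}\to0$ a.s.
\item Comparability of the windows $[K,2K]$ for $K\in[2^n,2^{n+1}]$ then extends this to all $K$.
\end{itemize}
Alternatively, one could use that $F_1$ is Gaussian (linear in $f$) and apply the ergodic-type result \cite[Theorem 4.1]{Caro-Helin-Lassas-AA-19} as in Theorem \ref{110}, but the Borel--Cantelli route is simpler. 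The restriction $\hat x\cdot\hat n\ge0$ enters only through Lemma \ref{101}. The same argument handles $F_1(\hat x,k+\tau)$.

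\emph{Relation formula and the other half-sphere.} The relation formula \eqref{136} follows identically from \eqref{135}. For that product we also need the bounds at $-\hat x$; the pathwise $F_2$ bounds are direction-free, and the $F_1$ term at $-\hat x$ must be handled in the same way. If it cannot be, I would restrict to directions where both $\pm\hat x$ are admissible. Finally, for $\hat x\cdot\hat n<0$: since $\mu^c$ and $\mu^r$ are real-valued, $\hat{\mu}(\tau\hat x)=\overline{\hat\mu(-\tau\hat x)}$, and $-\hat x$ lies in the admissible half-sphere. Hence $\hat{\mu^c}$ and $\hat{\mu^r}$ are known on all of $\mathbb R^d$, and Fourier inversion gives uniqueness of $\mu^c,\mu^r$.
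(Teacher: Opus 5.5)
Your treatment of the covariance formula \eqref{120}, and of the case $d=1$, follows the paper's proof. You use the same splitting of $u^\infty$, Theorem \ref{110} for the $F_0$--$F_0$ term and Cauchy--Schwarz for the remaining terms. You also use the pathwise Lemmas \ref{113} and \ref{144} with $s=\alpha-\epsilon_1$, $s/p=\frac{d-m}{2}+\epsilon_2$; this choice is exactly what produces the thresholds $m_{d,\alpha}$ and $\alpha_d$. For $F_1$ in $d=2,3$ you use Lemma \ref{101}. Your dyadic Borel--Cantelli step, based on $\mathbb{E}X_K\lesssim K^{\frac12-2\alpha}$ (resp.\ $K^{\frac m2-4\alpha}$), is an equivalent, self-contained replacement for the paper's check $\int_1^\infty k^{m+4\alpha-d-2}\,\mathbb{E}|F_1|^2\,dk<\infty$ combined with \cite[Lemma 4.1]{Li-Liu-Ma-CMP-21}. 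That part is fine.

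The gap is in the relation formula \eqref{136} for $d=2,3$, and you located it yourself.

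\emph{Why Lemma \ref{101} does not apply.} For $\hat x\cdot\hat n>0$, the cross term pairing $F_1(-\hat x,k)$ with $F_0(\hat x,k+\tau)$ requires $\frac1K\int_K^{2K}k^{m+4\alpha-d-1}|F_1(-\hat x,k)|^2\,dk\to0$. Lemma \ref{101} does not give this. At the direction $-\hat x$ one has $\nabla_{y_1}\varphi=\hat x+\frac{y_1-z_1}{|y_1-z_1|}$. This vanishes whenever $-\hat x$ lies in the cone $\Lambda=\{\frac{y-z}{|y-z|}:y\in U,\ z\in D\}$ of directions pointing from $D$ to $U$, so the $L_{y_1}$ integration by parts is unavailable. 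The paper's ``proved in the same manner'' passes over exactly this point, so ``handled in the same way'' is not justified.

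\emph{Why it is more than a missing estimate.} Stationary phase along the rays from $D$ through $U$ indicates that
$F_1(-\hat x,k)\approx c\,k^{1-2\alpha}C_{k,d,\alpha}\int_D e^{\mathrm{i}k\hat x\cdot z}Q(z)f(z)\,dz$, with $Q(z)=\int_0^\infty q(z-t\hat x)\,dt$.
Consequently $\mathbb{E}[k^{m+4\alpha-d-1}F_1(-\hat x,k)F_0(\hat x,k+\tau)]$ behaves like a constant times $k^{1-2\alpha}\widehat{Q\mu^{r}}(\tau\hat x)$. In general this does not tend to zero when $\alpha\le\frac12$.

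\emph{Why your fallback fails.} Read with the hypothesis of Lemma \ref{101}, it leaves only the directions $\hat x\cdot\hat n=0$. These give $\hat{\mu^{r}}$ only on a hyperplane, and conjugate symmetry adds nothing since the hyperplane is symmetric. Such data do not determine $\mu^{r}$: adding $\hat n\cdot\nabla\psi$ with $\psi\in C_0^\infty(D)$ is invisible on that hyperplane. Hence your closing claim that $\hat{\mu^{r}}$ is known on all of $\mathbb{R}^d$ fails.

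\emph{What rescues uniqueness.}
The proof of Lemma \ref{101} works verbatim for every $\hat x\notin\overline{\Lambda}$. Since $\operatorname{dist}(D,U)>0$, one may choose $\hat n$ so that $\overline{\Lambda}\subset\{\omega\cdot\hat n\le-\delta\}$ for some $\delta>0$. Then both $\pm\hat x$ avoid $\overline{\Lambda}$ on the open band $|\hat x\cdot\hat n|<\delta$. On this band, \eqref{136} follows exactly as \eqref{120} does, which gives $\hat{\mu^{r}}$ on an open cone. Since $\hat{\mu^{r}}$ is entire (Paley--Wiener), it is then determined everywhere.

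This proves uniqueness of $\mu^{r}$. It does not prove the explicit formula \eqref{136} on the whole half-sphere $\hat x\cdot\hat n\ge0$.
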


\begin{proof}
We begin by presenting the proof for the one-dimensional case. To verify \eqref{120}, we examine the data correlation expressed as
\begin{align}\label{145}
    & \frac{ 1 }{ K } \int_{ K }^{ 2K } k^{ m + 4 \alpha - 2 } \overline{ u^{ \infty } ( \hat{x}, k ) } u^{ \infty } ( \hat{x}, k + \tau ) dk\notag \\
    & = \frac{ 1 }{ K } \Big\{ \int_{ K }^{ 2K }  k^{ m + 4 \alpha - 2 } \overline{ F_{ 0 } ( k ) } F_{ 0 } ( k + \tau ) dk + \int_{ K }^{ 2K }  k^{ m + 4 \alpha - 2 } \overline{ F_{ 0 } ( k ) } ( F_{ 1 } ( k + \tau ) + F_{ 2 } ( k + \tau ) ) dk\notag \\ 
    &\quad + \int_{ K }^{ 2K }  k^{ m + 4 \alpha - 2 } ( \overline{ F_{ 1 } ( k ) } + \overline{ F_{ 2 } ( k ) } ) F_{ 0 } ( k + \tau ) dk 
    \notag \\ 
    &\quad + \int_{ K }^{ 2K }  k^{ m + 4 \alpha - 2 } ( \overline{ F_{ 1 } ( k ) } + \overline{ F_{ 2 } ( k ) } ) ( F_{ 1 } ( k + \tau ) + F_{ 2 } ( k + \tau ) ) dk \Big\},
\end{align}
where $F_{ i } ( k ) = F_{ i } ( \hat{x}, k )$, $i = 0, 1, 2$. Applying Theorem \ref{110} yields
\begin{align}\label{146}
    \lim \limits_{ K \rightarrow \infty } \frac{ 1 }{ K } \int_{ K }^{ 2K }  k^{ m + 4 \alpha - 2 } \overline{ F_{ 0 } ( k ) } F_{ 0 } ( k + \tau ) dk = \frac{ 1 }{ C_{ 1, \alpha }^{ c } } \hat{\mu^{ c }} ( \tau \hat{x} ), \quad \text{a.s.}.
\end{align}
For the mixed term, by the Cauchy--Schwarz inequality, we obtain
\begin{align*}
    & \frac{ 1 }{ K }\int_{ K }^{ 2K }  k^{ m + 4 \alpha - 2 } \overline{ F_{ 0 } ( k ) } ( F_{ 1 } ( k + \tau ) + F_{ 2 } ( k + \tau ) ) d k 
    \\ & \leq \Big( \frac{ 1 }{ K } \int_{ K }^{ 2K }  k^{ m + 4 \alpha - 2 } | F_{ 0 } ( k ) |^{ 2 } dk \Big)^{ \frac{ 1 }{ 2 } } \Big( \frac{ 1 }{ K } \int_{ K }^{ 2K }  k^{ m + 4 \alpha - 2 } | F_{ 1 } ( k + \tau ) + F_{ 2 } ( k + \tau ) |^{ 2 } dk \Big)^{ \frac{ 1 }{ 2 } }.
\end{align*}
By Lemma \ref{144}, we get
\begin{align*}
    \frac{ 1 }{ K } \int_{ K }^{ 2K } k^{ m + 4 \alpha - 2 } | F_{ 1 } ( k + \tau ) + F_{ 2 } ( k + \tau ) |^{ 2 } dk & \lesssim \frac{ 1 }{ K } \int_{ K }^{ 2K } k^{ m + 4 \alpha - 2 } k^{ 2 - 4 \alpha + \frac{ 6 s }{ p } - 4s } dk
    \\ & \lesssim K^{ m + \frac{ 6 s }{ p } - 4 s }, \quad \text{a.s.},
\end{align*}
for $s \in [ \frac{ \alpha }{ 2 }, \alpha )$ and $p \in ( 1, \infty )$ satisfying $\frac{ s }{ p } > \frac{ 1 - m }{ 2 }$. Choosing
\begin{align*}
    s = \alpha - \epsilon_{ 1 }, \quad  \frac{ s }{ p } = \frac{ 1 - m }{ 2 } + \epsilon_{ 2 },
\end{align*}
we obtain that as $K \rightarrow \infty$,
\begin{align}\label{147}
    \frac{ 1 }{ K } \int_{ K }^{ 2K } k^{ m + 4 \alpha - 2 } | F_{ 1 } ( k + \tau ) + F_{ 2 } ( k + \tau ) |^{ 2 } dk \lesssim K^{ 3 - 2 m - 4 \alpha + \epsilon } \rightarrow 0, \quad \text{a.s.},
\end{align}
for every $m \in ( \frac{ 3 }{ 2 } - 2 \alpha, 1 )$. Putting \eqref{145}, \eqref{146}, and \eqref{147} together establishes \eqref{120} for $d = 1$. The proof of \eqref{136} in one dimension follows a similar argument.

We now proceed to the proof for two and three dimensions. To prove \eqref{120}, we calculate that
\begin{align*}
    \frac{ 1 }{ K } \int_{ K }^{ 2K } k^{ m + 4 \alpha - d - 1 } \overline{ u^{ \infty } ( \hat{x}, k ) } u^{ \infty } ( \hat{x}, k + \tau ) dk & = \sum_{ i, j = 0 }^{ 2 } \frac{ 1 }{ K } \int_{ K }^{ 2K } k^{ m + 4 \alpha - d - 1 } \overline{ F_i ( \hat{x}, k ) } F_j ( \hat{x}, k + \tau ) dk
    \\ & = \sum_{ i, j = 0 }^{ 2 } I_{ i, j } ( \hat{x}, K, \tau ).
\end{align*}
Theorem \ref{110} yields that
\begin{align}\label{114}
    \lim \limits_{ K \rightarrow \infty } I_{ 0, 0 } ( \hat{x}, K, \tau ) = \frac{ 1 }{ C^{ c }_{ d, \alpha } } \hat{ \mu^{ c } } ( \tau \hat{x} ), \quad \text{a.s.}.
\end{align}
For higher-order terms, using the Cauchy--Schwarz inequality, we obtain
\begin{align*}
    | I_{ i, j } | & = \Big | \frac{ 1 }{  K } \int_{ K }^{ 2K } k^{ m + 4 \alpha - d - 1 } \overline{ F_{ i } ( \hat{x}, k ) } F_{ j } ( \hat{x}, k + \tau ) dk \Big|\\
    & \leq \Big( \frac{ 1 }{ K } \int_{ K }^{ 2K } k^{ m + 4 \alpha - d - 1 } | F_{ i } ( \hat{x}, k ) |^{ 2 } dk \Big)^{ \frac{ 1 }{ 2 } } \Big( \frac{ 1 }{ K } \int_{ K }^{ 2K } k^{ m + 4 \alpha - d - 1 } | F_{ j } ( \hat{x}, k + \tau ) |^{ 2 } dk \Big)^{ \frac{ 1 }{ 2 } }.
\end{align*}
By Lemma \ref{101}, for $d = 2$, 
\begin{align*}
    \int_{ 1 }^{ \infty } k^{ m + 4 \alpha - d - 2 } \mathbb{E} | F_{ 1 } ( \hat{x}, k ) |^{ 2 } dk \lesssim \int_{ 1 }^{ \infty } k^{ m + 4 \alpha - 4 } k^{ \frac{ 7 }{ 2 } - 6 \alpha - m } dk < \infty.
\end{align*}
For $d = 3$,
\begin{align*}
    \int_{ 1 }^{ \infty } k^{ m + 4 \alpha - d - 2 } \mathbb{E} | F_{ 1 } ( \hat{x}, k ) |^{ 2 } dk \lesssim \int_{ 1 }^{ \infty } k^{ m + 4 \alpha - 5 } k^{ 4 - 8 \alpha - \frac{ m }{ 2 } } dk < \infty.
\end{align*}
Hence, by \cite[Lemma 4.1]{Li-Liu-Ma-CMP-21}, we get
\begin{align}\label{115}
    \lim \limits_{ K \rightarrow \infty } \frac{ 1 }{ K } \int_{ K }^{ 2K }  k^{ m + 4 \alpha - d - 1 } | F_{ 1 } ( \hat{x}, k ) |^{ 2 } dk = 0, \quad \text{a.s.}.
\end{align}
It follows from Lemma \ref{113} that 
\begin{align*}
    \frac{ 1 }{ K } \int_{ K }^{ 2K } k^{ m + 4 \alpha - d - 1 } | F_{ 2 } ( \hat{x}, k ) |^{ 2 } dk & \lesssim \frac{ 1 }{ K } \int_{ K }^{ 2K } k^{ m + 4 \alpha - d - 1 } k^{ d + 1 - 4 \alpha + \frac{ 10 s }{ p } - 8 s } d k 
    \\ & \lesssim K^{ \frac{ 10s }{ p } - 8s + m }, \quad \text{a.s.}.
\end{align*}
By choosing 
\begin{align*}
    s = \alpha - \epsilon_{ 1 }, \quad \frac{ s }{ p } = \frac{ d - m }{ 2 } + \epsilon_{ 2 },
\end{align*}
we obtain that as $K \rightarrow \infty$, 
\begin{align}\label{116}
    \frac{ 1 }{ K } \int_{ K }^{ 2K } k^{ m + 4 \alpha - d - 1 } | F_{ 2 } ( \hat{x}, k ) |^{ 2 } dk \lesssim K^{ 5 d - 8 \alpha - 4 m + \epsilon } \rightarrow 0, \quad \text{a.s.},
\end{align}
for $m \in ( \frac{ 5 d }{ 4 } - 2 \alpha, d )$ and $\alpha \in ( \frac{ d }{ 8 }, 1 )$. Combining \eqref{114}, \eqref{115}, and \eqref{116}, we obtain \eqref{120} for $d = 2, 3$.  The identity \eqref{136} is proved in the same manner, which completes the proof.
\end{proof}

\section{Conclusion}\label{sec5}

In this work, we investigated the inverse random source problem for the fractional Helmholtz equation. The random source is modeled as a GMIG random field whose covariance and relation operators are classical pseudo-differential operators. We analyzed both the direct and inverse problems. For the direct problem, we established well-posedness by proving the existence and uniqueness of distributional solutions for sufficiently large wavenumbers. For the inverse problem, by combining the Born approximation with tools from microlocal analysis, we derived an explicit connection between the principal symbols of the covariance and relation operators of the random source and the far-field pattern generated by a single realization. This connection enabled us to prove the uniqueness of recovering the principal symbols of both operators from a single measurement.

In the present paper, the potential term is assumed to be deterministic. A challenging and important direction for future research is to develop the direct and inverse scattering theory for fractional Helmholtz equations with random potentials. We hope to report progress on this problem in future work.

\appendix

\section{The Green's function}

For $\alpha \in ( 0, 1 )$, let $G^{ k, \delta } ( x )$ be the function that satisfies
\begin{align*}
    ( - \Delta )^{ \alpha } G^{ k, \delta } ( x ) - k^{ 2 \alpha } G^{ k, \delta } ( x ) = \delta ( x ),
\end{align*}
and admits the Fourier representation
\begin{align*}
    G^{ k, \delta } ( x )  = ( 2 \pi )^{ - d } \int_{ \mathbb{R}^{ d } } \frac{ e^{ \mathrm{i} x \cdot \xi } }{ | \xi |^{ 2 \alpha } - k^{ 2 \alpha } } d \xi.
\end{align*}
Equivalently, using spherical coordinates, it can be written as
\begin{equation*}
    G^{ k, \delta } ( x )= ( 2 \pi )^{ - \frac{ d }{ 2 } } | x |^{ - \frac{ d }{ 2 } + 1 } \int_{ 0 }^{ \infty } r^{ \frac{ d }{ 2 } } J_{ \frac{ d }{ 2 } - 1 } ( r | x | ) ( r^{ 2 \alpha } - k^{ 2 \alpha } )^{ - 1 } d r,
\end{equation*}
where $J_\nu$ is the Bessel function of order $\nu$, given explicitly by
\begin{align*}
    J_{ \nu } ( z ) = \Big( \frac{ z }{ 2 } \Big)^{ \nu } \sum_{ k = 0 }^{ \infty } ( - 1 )^{ k } \frac{ ( z^{ 2 } / 4 )^{ k } }{ k! \Gamma ( \nu + k + 1 ) }.
\end{align*}

Define the Mellin transform of $f_{ i }$ by
\begin{align*}
    g_{ i } ( s ) : = \mathcal{M} [ f_{ i } ] ( s ) = \int_{ 0 }^{ \infty } f_{ i } ( r ) r^{ s - 1 } d r, \quad i \in \{ 1, 2 \}.
\end{align*}
Let
\begin{align*}
    f_{ 1 } ( r ) = J_{ \frac{ d }{ 2 } - 1 } ( r ), \quad f_{ 2 } ( r ) = ( r^{ 2 \alpha } - k^{ 2 \alpha } )^{ - 1 },
\end{align*}
and set 
\begin{align*}
    a = 1 - \frac{ d }{ 2 }, \quad \beta = \frac{ d }{ 2 }.
\end{align*}
Using the Mellin convolution identity (see, e.g., \cite{Bateman-54}),
\begin{align*}
    \mathcal{M} \Big[ x^{ a } \int_{ 0 }^{ \infty } r^{ \beta } f_{ 1 } ( x r ) f_{ 2 } ( r ) d r \Big] = g_{ 1 } ( s + a ) g_{ 2 } ( 1 - s - a + \beta ),
\end{align*}
we obtain
\begin{align*}
    \mathcal{M} [ G^{ k, \delta } ] ( s ) &= ( 2 \pi )^{ - \frac{ d }{ 2 } } g_{ 1 } ( s + a ) g_{ 2 } ( 1 - s - a + \beta ) \\
    &= ( 2 \pi )^{ - \frac{ d }{ 2 } } g_{ 1 } ( s + 1 - \frac{ d }{ 2 } ) g_{ 2 } ( d - s ).
\end{align*}

For $\Re s \in ( 1 - \frac{ d }{ 2 }, \frac{ 3 }{ 2 } )$, we have 
\begin{align*}
    g_{ 1 } ( s ) = \int_{ 0 }^{ \infty } J_{ \frac{ d }{ 2 } - 1 } ( x ) x^{ s - 1 } d x = \frac{ 2^{ s - 1 } \Gamma ( \frac{ d }{ 4 } + \frac{ s }{ 2 } - \frac{ 1 }{ 2 } ) }{ \Gamma ( \frac{ d }{ 4 } - \frac{ s }{ 2 } + \frac{ 1 }{ 2 } ) }.
\end{align*}
Let $y = x^{ 2 \alpha }$, $\alpha \in ( 0, 1 )$. For $\Re s \in ( 0, 2 \alpha )$, 
\begin{align*}
    g_{ 2 } ( s )& = \int_{ 0 }^{ \infty } ( x^{ 2 \alpha } - k^{ 2 \alpha } )^{ - 1 } x^{ s - 1 } d x\\
    &= - \frac{ 1 }{ 2 \alpha } \int_{ 0 }^{ \infty } ( k^{ 2 \alpha } - y )^{ - 1 } y^{ \frac{ s }{ 2 \alpha } - 1 } d y = - \frac{ \pi }{ 2 \alpha } k^{ s - 2 \alpha } \cot \Big( \frac{ \pi s }{ 2 \alpha } \Big), 
\end{align*}
where the integral is interpreted in the Cauchy principal value sense. Using the reflection formulas for the Gamma function, 
\begin{align*}
    \Gamma ( z ) \Gamma ( 1 - z ) & = \frac{ \pi }{ \sin ( \pi z ) }, \quad z \neq 0, \pm 1, \cdots,
    \\ \Gamma ( z + \frac{ 1 }{ 2 } ) \Gamma ( \frac{ 1 }{ 2 } - z ) & = \frac{ \pi }{ \cos ( \pi z ) }, \quad z \neq 0, \pm 1, \cdots,
\end{align*}
we obtain, for $\Re s \in ( 0, 2 \alpha )$, 
\begin{align*}
    g_{ 2 } ( s ) = - \frac{ \pi }{ 2 \alpha } k^{ s - 2 \alpha } \frac{ \Gamma ( \frac{ s }{ 2 \alpha } ) \Gamma ( 1 - \frac{ s }{ 2 \alpha } ) }{ \Gamma ( \frac{ 1 }{ 2 } + \frac{ s }{ 2 \alpha } ) \Gamma ( \frac{ 1 }{ 2 } - \frac{ s }{ 2 \alpha } ) }.
\end{align*}
Therefore, for $\Re s \in ( 0, \frac{ d }{ 2 } + \frac{ 1 }{ 2 } ) \cap ( d - 2 \alpha, d )$, we obtain
\begin{align*}
    \mathcal{M} [ G^{ k, \delta } ] ( s ) & = ( 2 \pi )^{ - \frac{ d }{ 2 } } \frac{ 2^{ s - \frac{ d }{ 2 } } \Gamma ( \frac{ s }{ 2 } ) }{ \Gamma ( \frac{ d }{ 2 } - \frac{ s }{ 2 } ) } ( - \frac{ \pi }{ 2 \alpha } ) k^{ d - s - 2 \alpha } \frac{ \Gamma ( \frac{ d - s }{ 2 \alpha } ) \Gamma ( 1 - \frac{ d - s }{ 2 \alpha } ) }{ \Gamma ( \frac{ 1 }{ 2 } + \frac{ d - s }{ 2 \alpha } ) \Gamma ( \frac{ 1 }{ 2 } - \frac{ d - s }{ 2 \alpha } ) }
    \\ & = - 2^{ s - d - 1 } \pi^{ 1 - \frac{ d }{ 2 } } \alpha^{ - 1 } k^{ d - s - 2 \alpha } \frac{ \Gamma ( \frac{ s }{ 2 } ) \Gamma ( \frac{ s }{ 2 \alpha } + 1 - \frac{ d }{ 2 \alpha } ) \Gamma ( - \frac{ s }{ 2 \alpha } + \frac{ d }{ 2 \alpha } ) }{ \Gamma ( \frac{ s }{ 2 \alpha } - \frac{ d }{ 2 \alpha } + \frac{ 1 }{ 2 } ) \Gamma ( - \frac{ s }{ 2 } + \frac{ d }{ 2 } ) \Gamma ( - \frac{ s }{ 2 \alpha } + \frac{ d }{ 2 \alpha } + \frac{ 1 }{ 2 } ) }.
\end{align*}
The admissible strip for $s$ imposes the constraint $d < 1 + 4 \alpha$, which identifies the range of dimensions for which the Mellin representation is valid. Nevertheless, by analytic continuation, the resulting expression remains valid for all dimensions $d$.

The inverse Mellin transform can be expressed as a Mellin--Barnes contour integral. For $x\neq 0$, we have
\begin{align*}
    G^{ k, \delta } ( x )& = G^{ k, \delta } ( | x | ) = \frac{ 1 }{ 2 \pi \mathrm{ i } } \int_{ \gamma - \mathrm{ i } \infty }^{ \gamma + \mathrm{ i } \infty } \mathcal{M} [ G^{ k, \delta } ] ( s ) | x |^{ - s } d s 
    \\ & = - \frac{ 2^{ - d - 1 } \pi^{ 1 - \frac{ d }{ 2 } } \alpha^{ - 1 } k^{ d - 2 \alpha } }{ 2 \pi \mathrm{ i } } \int_{ \gamma - \mathrm{ i } \infty }^{ \gamma + \mathrm{ i } \infty } \frac{ \Gamma ( \frac{ s }{ 2 } ) \Gamma ( \frac{ s }{ 2 \alpha } + 1 - \frac{ d }{ 2 \alpha } ) \Gamma ( - \frac{ s }{ 2 \alpha } + \frac{ d }{ 2 \alpha } ) }{ \Gamma ( \frac{ s }{ 2 \alpha } - \frac{ d }{ 2 \alpha } + \frac{ 1 }{ 2 } ) \Gamma ( - \frac{ s }{ 2 } + \frac{ d }{ 2 } ) \Gamma ( - \frac{ s }{ 2 \alpha } + \frac{ d }{ 2 \alpha } + \frac{ 1 }{ 2 } ) } \Big( \frac{ k | x | }{ 2 } \Big)^{ - s } d s.
\end{align*}
The integral converges provided $\gamma \in ( \max \{ d - 2 \alpha, 0 \}, \frac{ d }{ 2 } + \frac{ 1 }{ 2 } )$. The Mellin--Barnes integral above is a Fox $H$-function. Specifically,
\begin{align}\label{149}
    & \frac{ 1 }{ 2 \pi \mathrm{ i } } \int_{ \gamma - \mathrm{ i } \infty }^{ \gamma + \mathrm{ i } \infty } \frac{ \Gamma ( \frac{ s }{ 2 } ) \Gamma ( \frac{ s }{ 2 \alpha } + 1 - \frac{ d }{ 2 \alpha } ) \Gamma ( - \frac{ s }{ 2 \alpha } + \frac{ d }{ 2 \alpha } ) }{ \Gamma ( \frac{ s }{ 2 \alpha } - \frac{ d }{ 2 \alpha } + \frac{ 1 }{ 2 } ) \Gamma ( - \frac{ s }{ 2 } + \frac{ d }{ 2 } ) \Gamma ( - \frac{ s }{ 2 \alpha } + \frac{ d }{ 2 \alpha } + \frac{ 1 }{ 2 } ) } \Big( \frac{ k | x | }{ 2 } \Big)^{ - s } d s 
    \notag \\ & = \frac{ 1 }{ 2 \pi \mathrm{ i } } \int_{ \mathcal{L} } \mathcal{H}^{ m, n }_{ p, q } ( s ) z^{ - s } d s 
    = H^{ m, n }_{ p, q } ( z ),
\end{align}
where $z=\frac{k|x|}{2}\neq 0$, and the corresponding kernel is
\begin{align*}
    \mathcal{H}^{ m, n }_{ p, q } ( s ) & = \mathcal{H}^{ 2, 1 }_{ 2, 4 } 
    \left[
    \begin{array}{c}
        ( 1 - \frac{ d }{ 2 \alpha }, \frac{ 1 }{ 2 \alpha } ), ( \frac{ 1 }{ 2 } - \frac{ d }{ 2 \alpha }, \frac{ 1 }{ 2 \alpha } )\\[5pt]
        ( 0, \frac{ 1 }{ 2 } ), ( 1 - \frac{ d }{ 2 \alpha }, \frac{ 1 }{ 2 \alpha } ), ( 1 - \frac{ d }{ 2 }, \frac{ 1 }{ 2 } ), ( \frac{ 1 }{ 2 } - \frac{ d }{ 2 \alpha }, \frac{ 1 }{ 2 \alpha } )
    \end{array}
    \Bigg| s  \right].
\end{align*}
Thus, 
\begin{align*}
 H^{ m, n }_{ p, q } ( z ) & = H^{ 2, 1 }_{ 2, 4 } 
    \left[ \frac{ k | x | }{ 2 } \Bigg|
    \begin{array}{c}
        ( 1 - \frac{ d }{ 2 \alpha }, \frac{ 1 }{ 2 \alpha } ), ( \frac{ 1 }{ 2 } - \frac{ d }{ 2 \alpha }, \frac{ 1 }{ 2 \alpha } )\\[5pt]
        ( 0, \frac{ 1 }{ 2 } ), ( 1 - \frac{ d }{ 2 \alpha }, \frac{ 1 }{ 2 \alpha } ), ( 1 - \frac{ d }{ 2 }, \frac{ 1 }{ 2 } ), ( \frac{ 1 }{ 2 } - \frac{ d }{ 2 \alpha }, \frac{ 1 }{ 2 \alpha } )
    \end{array}
    \right], 
\end{align*}
with $0 \leq m = 2 \leq q = 4$ and $0 \leq n = 1 \leq p = 2$. 

Let
\begin{align*}
    \begin{cases}
        a_{ 1 } = 1 - \frac{ d }{ 2 \alpha }, \quad a_{ 2 } = \frac{ 1 }{ 2 } - \frac{ d }{ 2 \alpha },\\[3pt]
        b_{ 1 } = 0, \quad b_{ 2 } = 1 - \frac{ d }{ 2 \alpha }, \quad b_{ 3 } = 1 - \frac{ d }{ 2 }, \quad b_{ 4 } = \frac{ 1 }{ 2 } - \frac{ d }{ 2 \alpha },\\[3pt] 
        \alpha_{ 1 } = \frac{ 1 }{ 2 \alpha },\quad \alpha_{ 2 } = \frac{ 1 }{ 2 \alpha },\\[3pt] 
        \beta_{ 1 } = \frac{ 1 }{ 2 }, \quad \beta_{ 2 } = \frac{ 1 }{ 2 \alpha }, \quad \beta_{ 3 } = \frac{ 1 }{ 2 }, \quad \beta_{ 4 } = \frac{ 1 }{ 2 \alpha }. 
    \end{cases}
\end{align*}
With these parameters, the Green's function admits the Fox $H$-function representation
\begin{align*}
    G^{ k, \delta } ( x ) = - 2^{ - d - 1 } \pi^{ 1 - \frac{ d }{ 2 } } \alpha^{ - 1 } k^{ d - 2 \alpha } H^{ 2, 1 }_{ 2, 4 } \Big( \frac{ k | x | }{ 2 } \Big).
\end{align*}
The poles of the Gamma factors in the Mellin--Barnes integrand are given by
\begin{align*}
    b_{ j l } = \frac{ - b_{ j } - l }{ \beta_{ j } }, \quad j = 1, 2, \,  l = 0, 1 , 2, \cdots, 
\end{align*}
coming from $\Gamma ( b_{ j } + \beta_{ j } s )$, and 
\begin{align*}
    a_{ i k } = \frac{ 1 - a_{ i } + k }{ \alpha_{ i } }, \quad  i = 1,\,  k = 0, 1 , 2, \cdots,
\end{align*}
coming from $\Gamma ( 1 - a_{ i } - \alpha_{ i } s )$. These two families of poles do not overlap. Indeed,
\begin{align*}
    b_{ j l } \in \{ - 2 l_{ 1 }, d - 2 \alpha - 2 \alpha l_{ 2 } \}, \quad  & l_{ 1 },\, l_{ 2 } = 0, 1, 2, \cdots,
\end{align*}
while 
\begin{equation*}
 a_{ i k } \in \{ d + 2 \alpha k \}, \quad  k = 0, 1, 2, \cdots, 
\end{equation*}
so they lie strictly to the left and right, respectively, of an appropriate vertical line. Therefore, the contour $\mathcal{L}$ in (\ref{149}) may be selected as a standard Mellin--Barnes contour separating the two sets of poles, i.e., $\gamma \in ( \max \{ d - 2 \alpha, 0 \}, \frac{ d }{ 2 } + \frac{ 1 }{ 2 })$, so that $b_{ j l } < \gamma < a_{ i k }$.

To introduce the asymptotic expansions of the Fox $H$-functions, we introduce the following parameters: 
\begin{align*}
    a^{ * } & = \sum_{ i = 1 }^{ n } \alpha_{ i } - \sum_{ i = n + 1 }^{ p } \alpha_{ i } + \sum_{ j = 1 }^{ m } \beta_{ j } - \sum_{ j = m + 1 }^{ q } \beta_{ j } = 0,
    \\ \Delta & = \sum_{ j = 1 }^{ q } \beta_{ j } - \sum_{ i = 1 }^{ p } \alpha_{ i } = 1.
\end{align*}
Since $a^{ * } = 0$ and $\Delta > 0$, by \cite[Theorem 1.9]{Kilbas-Saigo-04}, as $k | x | \to \infty$, the dominant asymptotic terms of the Fox $H$-function take the form
\begin{align*}
    H^{ 2, 1 }_{ 2, 4 } \Big( \frac{ k | x | }{ 2 } \Big) = 
    \begin{cases}
        - \frac{ \mathrm{ i } }{ \sqrt{ \pi } } e^{ \mathrm{ i } k | x | } + \frac{ \mathrm{ i } }{ \sqrt{ \pi } } e^{ - \mathrm{ i } k | x | } + O ( k^{ - 1 - 2 \alpha } | x |^{ - 1 - 2 \alpha } ), & d = 1,\\[5pt] 
       - \frac{ 1 + \mathrm{i} }{ \sqrt{ \pi k | x | } } e^{ \mathrm{ i } k | x | } + \frac{ \mathrm{i} - 1 }{ \sqrt{ \pi k | x | } } e^{ - \mathrm{ i } k | x | } + O ( k^{ - \frac{ 3 }{ 2 } } | x |^{ - \frac{ 3 }{ 2 } } ), & d = 2,\\[5pt] 
       - \frac{ 2 }{ k | x | \sqrt{ \pi } } e^{ \mathrm{ i } k | x | } - \frac{ 2 }{ k | x | \sqrt{ \pi } } e^{ - \mathrm{ i } k | x | } + O ( k^{ - 3 - 2 \alpha } | x |^{ - 3 - 2 \alpha } ), & d = 3.
    \end{cases}
\end{align*}
Consequently, as $| x | \rightarrow \infty$, the Green's function satisfies
\begin{align*}
   G^{ k, \delta } ( x ) = 
   \begin{cases}
       \frac{ \mathrm{i} k^{ 1 - 2 \alpha } }{ 4 \alpha } e^{ \mathrm{i} k | x | } - \frac{ \mathrm{i} k^{ 1 - 2 \alpha } }{ 4 \alpha } e^{ - \mathrm{ i } k | x | } + O ( k^{ - 4 \alpha } | x |^{ - 1 - 2 \alpha } ), & d = 1,\\[5pt]
       \frac{ ( 1 + \mathrm{i} ) k^{ 3 / 2 - 2 \alpha } }{ 8 \alpha \sqrt{ \pi | x | } } e^{ \mathrm{ i } k | x | } + \frac{ ( 1 - \mathrm{i} ) k^{ 3 / 2 - 2 \alpha } }{ 8 \alpha \sqrt{ \pi | x | } } e^{ - \mathrm{ i } k | x | } + O ( k^{ \frac{ 1 }{ 2 } - 2 \alpha } | x |^{ - \frac{ 3 }{ 2 } } ), & d = 2,\\[5pt] 
       \frac{ k^{ 2 - 2 \alpha } }{ 8 \alpha \pi | x | } e^{ \mathrm{ i } k | x | } + \frac{ k^{ 2 - 2 \alpha } }{ 8 \alpha \pi | x | } e^{ - \mathrm{ i } k | x | } + O ( k^{ - 4 \alpha } | x |^{ - 3 - 2 \alpha } ), & d = 3.
   \end{cases}
\end{align*}

To eliminate the contributions of incoming wave  $e^{ - \mathrm{ i } k | x | }$ in the asymptotic expansion of $G^{k, \delta}$, we introduce auxiliary functions $G^{ k, 0 }$ satisfying
\begin{align*}
    ( - \Delta )^{ \alpha } G^{ k, 0 } - k^{ 2 \alpha } G^{ k, 0 } = 0, \quad \alpha \in ( 0, 1 ).
\end{align*}
For $x \in \mathbb{R}^{ d }$, we choose
\begin{align*}
    G^{ k, 0 } ( x ) = 
    \begin{cases}
        \cos ( k | x | ), & d = 1,\\[3pt] 
        J_{ 0 } ( k | x | ), & d = 2,\\[3pt] 
        \frac{ \sin ( k | x | ) }{ | x | }, & d = 3,
    \end{cases}
\end{align*}
each of which solves the homogeneous equation.

With appropriate constants $\widetilde{C}_{ k, d, \alpha }$, the combination $G^{ k, \delta } ( x ) + \widetilde{C}_{ k, d, \alpha } G^{ k, 0 } ( x )$ eliminates the incoming term and produces a purely outgoing leading asymptotic:
\begin{align*}
    G^{ k, \delta } ( x ) + \widetilde{C}_{ k, d, \alpha } G^{ k, 0 } ( x ) = C_{ k, d, \alpha } | x |^{ \frac{ 1 - d }{ 2 } } e^{ \mathrm{ i } k | x | } + O ( | x |^{ - N_{ d } } ),
\end{align*}
where $N_d$ and $C_{ k, d, \alpha }$ are defined in \eqref{Nd} and \eqref{Ckda}, respectively, and $\widetilde{C}_{ k, d, \alpha }$ is given by 
\begin{align*}
   \widetilde{C}_{ k, d, \alpha } =
    \begin{cases}
        \frac{ \mathrm{i} k^{ 1 - 2 \alpha } }{ 2 \alpha }, & d = 1,
        \\[5pt] \frac{ \mathrm{i} k^{ 2 - 2 \alpha } }{ 4 \alpha }, & d = 2,
        \\[5pt] \frac{ \mathrm{i} k^{ 2 - 2 \alpha } }{ 4 \pi \alpha }, & d = 3.
    \end{cases}
\end{align*}

Thus we define the radiating Green's function for the fractional Helmholtz equation by
\begin{align*}
    G^{ k } ( x ) : = G^{ k, \delta } ( x ) + \widetilde{C}_{ k, d, \alpha } G^{ k, 0 } ( x ),
\end{align*}
i.e., 
\begin{align*}
    G^{ k } ( x )=
    \begin{cases}
        - \frac{ \sqrt{ \pi } k^{ 1 - 2 \alpha } }{ 4 \alpha } H^{ 2, 1 }_{ 2, 4 } ( \frac{ k | x | }{ 2 } ) + \frac{ \mathrm{i} k^{ 1 - 2 \alpha } }{ 2 \alpha } \cos ( k | x | ),  & d = 1,\\[5pt]  
        - \frac{ k^{ 2 - 2 \alpha } }{ 8 \alpha } H^{ 2, 1 }_{ 2, 4 } ( \frac{ k | x | }{ 2 } ) + \frac{ \mathrm{i} k^{ 2 - 2 \alpha } }{ 4 \alpha } J_{ 0 } ( k | x | ), & d = 2,\\[5pt] 
        - \frac{ k^{ 3 - 2 \alpha } }{ 16 \alpha \sqrt{ \pi } } H^{ 2, 1 }_{ 2, 4 } ( \frac{ k | x | }{ 2 } ) + \frac{ \mathrm{i} k^{ 2 - 2 \alpha } }{ 4 \pi \alpha } \frac{ \sin ( k | x | ) }{ | x | }, & d = 3.
   \end{cases}
\end{align*}
Using the asymptotics of the Fox $H$-function, as $| x |\to \infty$, we obtain
\begin{align}\label{Gka}
    G^{ k } ( x ) = 
    \begin{cases}
        \frac{ \mathrm{i} }{ 2 \alpha } k^{ 1 - 2\alpha } e^{ \mathrm{i} k | x | } + O ( | x |^{ - 1 - 2 \alpha } ), & d = 1,\\[5pt]
        \frac{ 1 + \mathrm{i} }{ 4 \alpha \sqrt{ \pi } } k^{ \frac{ 3 }{ 2 } - 2 \alpha } | x |^{ - \frac{ 1 }{ 2 } } e^{ \mathrm{ i } k | x | } + O ( | x |^{ - \frac{ 3 }{ 2 } } ), & d = 2,\\[5pt] 
        \frac{ 1 }{ 4 \pi \alpha } k^{ 2 - 2\alpha } | x |^{ - 1 } e^{ \mathrm{ i } k | x | } + O ( | x |^{ - 3 - 2 \alpha } ), & d = 3.
   \end{cases}
\end{align}
Finally, it is straightforward to verify that the radiating Green's function satisfies the Sommerfeld radiation condition:
\begin{align*}
    \lim_{ r \rightarrow \infty } r^{ \frac{ d - 1 }{ 2 } } ( \partial_{ r } G^{ k } - \mathrm{i} k G^{ k } ) = 0.
\end{align*}

Next, we prove the uniqueness of the Green's function.

\begin{lemma}\label{homo FHE}
Let $\alpha \in ( 0, 1 )$ and $k > 0$. If $u$ satisfies
\begin{align}\label{homogeneous FHE}
    \begin{cases}
        ( - \Delta )^{ \alpha } u - k^{ 2 \alpha } u = 0 & \text{in} \ \mathbb{ R }^{ d },
        \\[5pt] \lim \limits_{ r \rightarrow \infty } r^{ \frac{ d - 1 }{ 2 } } ( \partial_{ r } u - \mathrm{ i } k u ) = 0, & r = | x |,
    \end{cases}
\end{align}
then $u \equiv 0$.
\end{lemma}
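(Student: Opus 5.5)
The plan is a Fourier-side argument: show that $\hat u$ is supported on the sphere $\{|\xi|=k\}$, reduce the equation to the classical Helmholtz equation, and then apply Rellich's lemma. Interpret $u$ as a tempered distribution, for instance $u\in H^{t}_{-1/2-\epsilon}(\mathbb R^d)$ as in Theorem \ref{104}, so that the equation becomes
\begin{align*}
(|\xi|^{2\alpha}-k^{2\alpha})\,\hat u(\xi)=0 \quad \text{in } \mathscr{S}'(\mathbb R^d).
\end{align*}

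\textbf{Step 1 (support of $\hat u$).} The symbol $p(\xi)=|\xi|^{2\alpha}-k^{2\alpha}$ is smooth and nonvanishing away from $\xi=0$ and the sphere $S_k=\{|\xi|=k\}$. Hence $\operatorname{supp}\hat u\subset S_k\cup\{0\}$.

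\textbf{Step 2 (no mass at the origin).} Any component supported at $\{0\}$ is a finite sum $\sum c_\beta\partial^\beta\delta$. Multiplying by $p$, which equals $-k^{2\alpha}\neq0$ near $0$ and is continuous there, forces all $c_\beta=0$. The only subtlety is the non-smoothness of $|\xi|^{2\alpha}$ at $0$. I would handle it by localizing with a cutoff $\chi$ near $0$: $\chi\hat u$ satisfies $\chi p\hat u=0$. Rather than dividing by $p$, I would use that $\chi\hat u$ is a finite-order distribution supported at a point. Since $u$ is in a weighted $L^2$ space, $\hat u$ lies in $H^{-1/2-\epsilon}_{t}$, which already excludes derivatives of $\delta$ of order $\ge0$ when $d\ge1$ (note $\delta\notin H^{-1/2-\epsilon}$ for $d\ge1$ when $\epsilon$ is small). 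So only the $S_k$ component survives.

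\textbf{Step 3 (reduction to the classical Helmholtz equation).} Near $S_k$ the function $|\xi|^{2\alpha}-k^{2\alpha}$ factors as $(|\xi|^2-k^2)\,g(\xi)$ with $g$ smooth and nonvanishing. The factor $g$ is not the obstacle. The point is that $p\hat u=0$ alone does not give $(|\xi|^2-k^2)\hat u=0$ for arbitrary distributions: layers $\partial_\nu^j\delta_{S_k}$ solve the classical equation only to first order. Multiplying by $g^{-1}$ (smooth near $S_k$, with a cutoff) does, however, give
\begin{align*}
(|\xi|^2-k^2)\hat u=0,
\end{align*}
since multiplication by a smooth function is legitimate. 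Hence $\Delta u+k^2u=0$ in $\mathbb R^d$.

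\textbf{Step 4 (Rellich's lemma).} With $u$ a tempered solution of the classical Helmholtz equation satisfying the Sommerfeld condition \eqref{102}, it is a radiating entire solution, so $u\equiv0$ by the standard uniqueness theorem. Equivalently, radiating solutions are represented by Green's formula, and for an entire solution that representation vanishes. For $d=1$ the solutions are $ae^{ikx}+be^{-ikx}$, and the radiation condition at $\pm\infty$ kills both coefficients.

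\textbf{Main obstacle.} I expect the delicate point to be Step 2 together with justifying tempered growth: one needs $u\in\mathscr S'$ to even write the Fourier-side equation. I would assume the solution class of Theorem \ref{104}, or note that the radiation condition plus the equation implies polynomial growth.
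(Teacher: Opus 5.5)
Your proposal is correct and follows the paper's own route: Fourier transform, deduce $(|\xi|^2-k^2)\hat u=0$, then invoke uniqueness for entire radiating Helmholtz solutions. The paper asserts the middle step in one line, and your support analysis together with the smooth factorization $|\xi|^{2\alpha}-k^{2\alpha}=(|\xi|^2-k^2)g(\xi)$ supplies the justification it omits.

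One slip in Step 2 needs fixing. Since $\delta\in H^{s}(\mathbb{R}^d)$ if and only if $s<-d/2$, in $d=1$ the point mass is \emph{not} excluded by $\hat u\in H^{-1/2-\epsilon}_{\mathrm{loc}}$; indeed, constants lie in $H^t_{-1/2-\epsilon}(\mathbb{R})$. There you must use your first observation, $p\,\delta=p(0)\,\delta=-k^{2\alpha}\delta$, with the Sobolev index ruling out only $\partial^j\delta$ for $j\ge 1$.
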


\begin{proof}
Taking the Fourier transform of the fractional Helmholtz equation in \eqref{homogeneous FHE} yields 
\begin{align*}
    ( | \xi |^{ 2 \alpha } - k^{ 2 \alpha } ) \hat{ u } ( \xi ) = 0.
\end{align*}
Since $\xi\in\mathbb R^d$ and $k>0$, we have 
\begin{align*}
    ( | \xi |^{ 2 } - k^{ 2 } ) \hat{ u } ( \xi ) = 0. 
\end{align*}
Hence, $u$ is a solution of the Helmholtz equation along with the Sommerfeld radiation condition, i.e., 
\begin{align*}
    \begin{cases}
        - \Delta u - k^{ 2 } u = 0 & \text{in} \ \mathbb{ R }^{ d },
        \\[5pt] \lim \limits_{ r \rightarrow \infty } r^{ \frac{ d - 1 }{ 2 } } ( \partial_{ r } u - \mathrm{ i } k u ) = 0, & r = | x |,
    \end{cases}
\end{align*}
which implies $u \equiv 0$.
\end{proof}

\begin{lemma}
Let $\alpha \in ( 0, 1 )$ and $k > 0$. Then there exists a unique Green's function $G^{ k }$ satisfying
\begin{align}\label{FHE Green}
    \begin{cases}
        ( - \Delta )^{ \alpha } G^{ k } ( x ) - k^{ 2 \alpha } G^{ k } ( x ) = \delta_{ 0 } ( x ) & \text{in} \ \mathbb{ R }^{ d },
        \\[5pt] \lim \limits_{ r \rightarrow \infty } r^{ \frac{ d - 1 }{ 2 } } ( \partial_{ r } u - \mathrm{ i } k u ) = 0, & r = | x |.
    \end{cases}
\end{align}
\end{lemma}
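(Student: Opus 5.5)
Existence and uniqueness are handled separately. For existence I take the radiating Green's function built above, $G^{k} = G^{k,\delta} + \widetilde{C}_{k,d,\alpha} G^{k,0}$, and check the two conditions in \eqref{FHE Green}.

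\textbf{The equation.} By construction, $G^{k,\delta}$ is the principal-value Fourier integral of $(|\xi|^{2\alpha}-k^{2\alpha})^{-1}$. Since $|\xi|^{2\alpha}$ is a multiplier, applying $(-\Delta)^{\alpha}-k^{2\alpha}$ gives $\delta_0$ in $\mathscr{D}'(\mathbb{R}^d)$. To make this rigorous I would:
\begin{itemize}
\item regularize the principal value by $\pm \mathrm{i}\tau$ and let $\tau \to 0$, as in the proof of Theorem \ref{6};
\item note that $G^{k,0}$ has Fourier transform supported on the sphere $|\xi|=k$, where $|\xi|^{2\alpha}-k^{2\alpha}$ vanishes, so $G^{k,0}$ solves the homogeneous equation.
\end{itemize}
Adding the two gives $(-\Delta)^{\alpha}G^{k} - k^{2\alpha}G^{k} = \delta_0$.

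\textbf{The radiation condition.} This follows from the asymptotics \eqref{Gka}. The leading term $C_{k,d,\alpha}|x|^{(1-d)/2}e^{\mathrm{i}k|x|}$ satisfies
\[
\partial_r - \mathrm{i}k = O\big(r^{-(d+1)/2}\big).
\]
The remainder is $O(r^{-N_d})$ with $N_d > (d-1)/2$. One also needs a matching bound for its radial derivative. I would get this by differentiating the Mellin--Barnes representation of the Fox $H$-function, using \cite[Theorem 1.9]{Kilbas-Saigo-04} again for the derivative, which is itself an $H$-function of the same type.

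\textbf{Uniqueness.} Let $G_1$ and $G_2$ both satisfy \eqref{FHE Green}, and set $w = G_1 - G_2$. By linearity, $w$ solves the homogeneous problem \eqref{homogeneous FHE}:
\begin{itemize}
\item the equation is satisfied because the deltas cancel;
\item the radiation condition is satisfied because it is linear in $u$.
\end{itemize}
Lemma \ref{homo FHE} then gives $w \equiv 0$, so $G_1 = G_2$.

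\textbf{Main obstacle.} The delicate point in that lemma is dividing by $|\xi|^{2\alpha}-k^{2\alpha}$ to reduce to $(|\xi|^2-k^2)\hat w = 0$. This works because $|\xi|^{2\alpha}-k^{2\alpha} = (|\xi|^2-k^2)\,g(\xi)$ with $g$ smooth and nonvanishing near $|\xi|=k$. Away from the origin, the identity extends to the whole support of $\hat w$. Near the origin, $|\xi|^{2\alpha}$ is not smooth, but $|\xi|^{2\alpha}-k^{2\alpha}$ is bounded away from zero there, so $\hat w$ vanishes in a neighbourhood of $0$. Once this is settled, Rellich's lemma finishes the proof.

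I expect the radiation-condition check for the derivative of the remainder to be the most technical step, but it is a routine consequence of the $H$-function asymptotics.
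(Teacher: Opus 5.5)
Your proposal is correct and takes the same route as the paper. Existence comes from the appendix construction $G^{k}=G^{k,\delta}+\widetilde{C}_{k,d,\alpha}G^{k,0}$, and uniqueness comes from applying Lemma~\ref{homo FHE} to the difference of two solutions; your extra checks (the radial derivative of the remainder in the radiation condition, and the passage from $(|\xi|^{2\alpha}-k^{2\alpha})\hat w=0$ to $(|\xi|^2-k^2)\hat w=0$) make explicit details the paper leaves implicit.
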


\begin{proof}
The existence of $G^{ k }$ is proved in the appendix. Let $G^{ k, 1 }$ and $G^{ k, 2 }$ be two solutions of (\ref{FHE Green}). Define $v ( x ) : = G^{ k, 1 } ( x ) - G^{ k, 2 } ( x )$. Then $v ( x )$ satisfies (\ref{homogeneous FHE}). By Lemma \ref{homo FHE}, we obtain $v \equiv 0$. 
\end{proof}

\end{document}